\documentclass[12pt]{amsart}

\usepackage[hmargin=2.5cm,bmargin=2.5cm,tmargin=3cm]{geometry}
\usepackage[usenames]{color}

\usepackage{amsmath,amssymb,amsthm,amsfonts,enumerate,url,mathbbol,mathrsfs,tikz,graphicx,pifont}
\usepackage{hyperref}

\usepackage[normalem]{ulem}
\usepackage[utf8]{inputenc}
%\usepackage{array}
%\hyphenation{fern-uni-ver-si-tät}

\theoremstyle{plain}
\newtheorem{theorem}{Theorem}[section]
\newtheorem{lemma}[theorem]{Lemma}
\newtheorem{proposition}[theorem]{Proposition}
\newtheorem{corollary}[theorem]{Corollary}

\theoremstyle{definition}
\newtheorem{definition}[theorem]{Definition}
\newtheorem{example}[theorem]{Example}

\newtheorem{assumption}[theorem]{Assumption}
\newtheorem{remark}[theorem]{Remark}

\numberwithin{equation}{section}
\numberwithin{figure}{section}

\DeclareMathOperator{\esssup}{ess \, sup}

\DeclareMathOperator{\lvlsf}{\mathcal{G}}
\DeclareMathOperator{\diam}{diam}

\newcommand{\excset}{\mathfrak{S}}
\newcommand{\efd}{f_1}

\newcommand{\dx}{\; {\rm d}x}

\newcommand{\R}{\mathbb{R}}

\newcommand\mv{\mathsf{v}} % Notation for a vertex
\newcommand\mw{\mathsf{w}} % Another vertex
\newcommand\me{\mathsf{e}} % Notation for an edge
\newcommand\mV{\mathsf{V}} % Set of vertices
\newcommand\mE{\mathsf{E}} % Set of edges

\newcommand{\mG}{\mathsf{G}} 
\newcommand{\mC}{\mathsf{C}} 
\newcommand{\mQ}{\mathsf{Q}}
\newcommand{\mK}{\mathsf{K}} 
\newcommand{\mW}{\mathsf{W}} % Notation for discrete graph
\newcommand\Lfun{\mathscr{L}} % Discrete p-Laplacian
\newcommand{\Graph}{{\mathcal G}} % Notation for a graph
\newcommand{\Tree}{{\mathcal T}} % Notation for a tree

 % A flower graph
\newcommand\mP{\mathsf{P}} % Path graph
\newcommand\mS{\mathsf{S}} % Star graph 

\newcommand{\eigI}{\lambda} % Notation for eigenvalues numbered from 1
\newcommand{\eigIp}{\lambda^{(p)}} % Notation for p-eigenvalues numbered from 1
\newcommand{\eigO}{\mu} % Notation for eigenvalues numbered from 0 TO BE CHANGED TO MU
\newcommand{\RQ}[2]{\mathsf{R}_p(#1;#2)} % Notation for Rayleigh quotient of (G, f)

\title[Edge connectivity and the spectral gap]{Edge connectivity and the spectral gap of combinatorial and quantum graphs} 

\subjclass[2010]{}

\keywords{Quantum graphs, graphs, Sturm--Liouville problems, Bounds on spectral gaps}

\author[G.~Berkolaiko]{Gregory Berkolaiko}
\author[J.B.~Kennedy]{James B. Kennedy}
\author[P.~Kurasov]{Pavel Kurasov}
\author[D.~Mugnolo]{Delio Mugnolo}

\address{Gregory Berkolaiko, Department of Mathematics, Texas A{\&}M University, College Station, TX 77843-3368, USA}
\email{gregory.berkolaiko@math.tamu.edu}

\address{James B.\ Kennedy, Institut f\"ur  Analysis, Dynamik und Modellierung, Universit\"at Stuttgart, Pfaffenwaldring 57, D-70569 Stuttgart, Germany}% {\textit{and}} Institut f\"ur Analysis, Universit\"at Ulm, Helmholtzstr.~18, D-89069 Ulm, Germany}
\email{james.kennedy@mathematik.uni-stuttgart.de}

\address{Pavel Kurasov, Department of Mathematics, Stockholm University, SE-106 91 Stockholm, Sweden}
\email{kurasov@math.su.se}

\address{Delio Mugnolo, Lehrgebiet Analysis, Fakult\"at Mathematik und Informatik, Fern\-Universit\"at in Hagen, D-58084 Hagen, Germany}
\email{delio.mugnolo@fernuni-hagen.de}

\date{\today}

\thanks{The authors were partially supported by the Center for
  Interdisciplinary Research (ZiF) in Bielefeld in the framework of
  the cooperation group on ``Discrete and continuous models in the
  theory of networks''.  GB was also partially supported by NSF grant
  DMS-1410657. PK was also supported by the Swedish Research Council grant D0497301.}

\begin{document}

\begin{abstract}
  We derive a number of upper and lower bounds for the first nontrivial eigenvalue of
  a finite quantum graph in terms of the edge connectivity of the
  graph, i.e., the minimal number of edges which need to be removed to
  make the graph disconnected. On combinatorial graphs, one of the
  bounds is the well-known inequality of Fiedler, of which we give a
  new variational proof.   On quantum graphs, the corresponding bound 
  generalizes a recent result of Band and L\'evy.  All proofs are general 
  enough to yield corresponding estimates for the $p$-Laplacian and allow us to
  identify the minimizers.

  Based on the Betti number of the graph, we also derive upper and lower bounds
  on all eigenvalues which are ``asymptotically correct'', i.e. agree with the
  Weyl asymptotics for the eigenvalues of the quantum graph. In particular, the
  lower bounds improve the bounds of Friedlander on any given graph for all but
  finitely many eigenvalues, while the upper bounds improve recent results of
  Ariturk. Our estimates are also used to derive bounds on the eigenvalues of
  the normalized Laplacian matrix that improve known bounds of spectral graph
  theory.
\end{abstract}

\maketitle

\section{Introduction} 

The edge connectivity $\eta$ of a (combinatorial) graph $\mG$ is defined as the smallest number such that removal of at least $\eta$ edges is necessary in order to disconnect the graph; or equivalently, by Menger's theorem~\cite[Thm.~III.5]{Bollobas}, as the largest number of edge-independent paths that connect any two vertices of $\mG$. If we think of heat diffusion in a combinatorial graph, then the higher $\eta$, the more ways heat can spread around in the graph, regardless of the initial profile. This suggests that diffusion processes in graphs with larger edge connectivity can be expected to be faster spreading; this intuition was substantiated in a seminal article of Miroslav  Fiedler~\cite{Fie_cmj73}, where the \textit{spectral gap of a combinatorial graph} --- the lowest nonzero eigenvalue of the discrete Laplacian --- was shown to admit upper and lower estimates in terms of the edge connectivity of the same graph. More precisely, the main results in~\cite[\S~4]{Fie_cmj73} can be summarized as follows.

\begin{theorem}\label{thm:fiedler-2}
Let $\mG$ be a connected finite graph on $V$ vertices with edge connectivity
$\eta$. Then the first nontrivial eigenvalue $\gamma_1(\mG)$ of the discrete Laplacian $\mathcal L$ satisfies
\begin{equation}\label{eq:discconnectn}
2\eta\left[1-\cos \left(\frac{\pi}{V}\right) \right]
\le \gamma_1(\mG)
\le \eta+1.
\end{equation}
\end{theorem}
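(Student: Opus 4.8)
The plan is to prove the two inequalities in \eqref{eq:discconnectn} separately, since the upper and lower bounds call for rather different techniques.

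For the upper bound $\gamma_1(\mG) \le \eta + 1$, I would use the variational (min-max) characterization of the first nontrivial eigenvalue,
\begin{equation*}
\gamma_1(\mG) = \min\left\{ \frac{\sum_{\mathsf{e}=\{u,v\}} \bigl(f(u)-f(v)\bigr)^2}{\sum_{v} f(v)^2} : \sum_v f(v) = 0,\ f \not\equiv 0 \right\},
\end{equation*}
where the sum in the numerator runs over the edges of $\mG$. The strategy is to exhibit a single well-chosen test function $f$ orthogonal to the constants and to estimate its Rayleigh quotient. Since $\eta$ is the minimal cut, there is a partition of the vertex set into two nonempty blocks $A$ and $B$ joined by exactly $\eta$ edges; I would take $f$ to be constant on each block, with the two values chosen so that $\sum_v f(v)=0$. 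Then the numerator counts only the $\eta$ cut-edges (each contributing the squared difference of the two values), and a direct computation of the resulting Rayleigh quotient should yield a bound of the form $\eta \cdot (\text{something})$. The delicate point is obtaining exactly $\eta+1$ rather than a weaker multiple of $\eta$: this likely requires choosing the block sizes or the test values optimally, and may use the fact that the minimum cut cannot be too lopsided, or a refinement exploiting that a degree-based term enters the estimate. I expect this to be the main obstacle, as squeezing the constant down to $\eta+1$ is where the combinatorics of the cut must be used sharply rather than crudely.

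For the lower bound, the natural route is to relate $\gamma_1(\mG)$ to the edge connectivity through a comparison with a \emph{path graph}, since $2[1-\cos(\pi/V)]$ is precisely the spectral gap of the path $P_V$ on $V$ vertices. The idea is that high edge connectivity forces the graph to be ``well-connected'' in the sense that any two vertices are joined by $\eta$ edge-disjoint paths (Menger's theorem, as cited in the excerpt). Given a minimizing eigenfunction $f$ for $\gamma_1(\mG)$, I would locate vertices $u_0, v_0$ attaining the extreme values of $f$; these are joined by $\eta$ edge-disjoint paths, and each such path, having length at most $V-1$, forces a lower bound on $\sum_{\mathsf{e}}(f(u)-f(v))^2$ along that path via a discrete Cauchy--Schwarz or Poincaré-type inequality on the path. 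Summing the $\eta$ disjoint contributions and comparing against $\sum_v f(v)^2$ should reproduce the factor $\eta$ together with the path-gap factor $2[1-\cos(\pi/V)]$.

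The cleaner and more likely intended approach, however—given that the abstract advertises a \emph{new variational proof} valid also for the $p$-Laplacian—is to reformulate both bounds through the Rayleigh quotient $\RQ{f}{\mG}$ directly and to build the comparison structurally: decompose the Dirichlet-type energy over the $\eta$ edge-disjoint paths supplied by Menger's theorem, apply the one-dimensional path estimate edgewise, and recombine. I would carry out the lower bound in this order: first fix the eigenfunction and normalize it; second invoke Menger to extract $\eta$ edge-disjoint paths between the extremal vertices; third apply the sharp discrete Poincaré inequality on each path of length $\le V-1$; fourth sum and divide. The main obstacle in this second half is book-keeping: ensuring the edge-disjointness lets me sum energies without double-counting, and verifying that the worst case genuinely occurs for the longest admissible path so that the constant $1-\cos(\pi/V)$ is not lost. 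Throughout, I would keep the argument phrased in terms of differences $f(u)-f(v)$ so that replacing squares by $p$-th powers immediately yields the $p$-Laplacian version promised in the abstract.
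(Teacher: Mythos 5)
Your upper-bound sketch is essentially the right (and standard) one, and it does close: with $f\equiv |B|$ on $A$ and $f\equiv -|A|$ on $B$ across a minimum cut, the Rayleigh quotient is $\eta V/(|A||B|)\le \eta V/(V-1)\le \eta+1$, the last step using $\eta\le\delta_{\min}\le V-1$ for simple graphs; no balancing of the cut is needed. Note, though, that the paper itself does not reprove this half — it quotes it from Fiedler and devotes its new proof entirely to the lower bound (Theorem~\ref{thm:fiedler-p} with $p=2$, since $\gamma_1(\mP_V)=2[1-\cos(\pi/V)]$).

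For the lower bound your proposed route has a genuine gap. Summing a sharp discrete Poincar\'e inequality over the $\eta$ edge-disjoint Menger paths controls the \emph{energy} $\eta$-fold (edge-disjointness lets the numerators add), but it does not reconstruct the \emph{denominator} $\sum_\mv f(\mv)^2$: the paths need not cover all vertices, the restriction of $f$ to a given path need not have mean zero on that path, and $\sum_k \inf_c\sum_{\mv\in P_k}(f(\mv)-c)^2$ can be far smaller than $\inf_c\sum_{\mv\in\mV}(f(\mv)-c)^2$ (a path may run entirely through vertices where $f$ is nearly constant). If you retreat to the endpoint version — Cauchy--Schwarz along each path, $(f(u_0)-f(v_0))^2\le (V-1)\sum_{\me\in P_k}|\mathcal I^Tf(\me)|^2$ — the bookkeeping does work, but it only yields a bound of order $\eta/V^2$ with a constant strictly worse than $2[1-\cos(\pi/V)]$, because Cauchy--Schwarz is saturated by affine profiles while the extremal eigenvector is a cosine. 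The paper's proof avoids path decomposition altogether: it fixes the minimizer $\efd$, orders the vertices by its values, and performs \emph{surgery on the graph while keeping the test function} — every edge joining non-consecutive vertices in this ordering is replaced by the chain of edges through the intermediate vertices, which lowers the Rayleigh quotient by $\|x\|_p\le\|x\|_1$ applied to the vector of consecutive increments; since this surgery cannot decrease edge connectivity, the result is a pumpkin chain with at least $\eta$ parallel edges in each pumpkin, excess edges are deleted (lowering the quotient again), and one compares with the exactly computable eigenvalue $\eta\,\gamma_1(\mP_V)$ of the $\eta$-regular pumpkin chain. There the factor $\eta$ enters through the multiplicity of parallel edges in the comparison graph, not through a sum over disjoint paths, and the sharp constant survives. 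To repair your argument you would essentially have to reinvent this comparison; as stated, the recombination step in your plan does not go through.
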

Fiedler's lower estimate~\eqref{eq:discconnectn} is sharp: for $\eta=1$ and any given $V$, the path graph on $V$ vertices $\{\mv_1,\ldots,\mv_V\}$has spectral gap $\gamma_1=2\left[1-\cos\left(\frac{\pi}{V}\right)\right]$ with associated eigenvector 
\begin{displaymath}
f_1(\mv_j)=\cos\left(\frac{\pi}{V}\left(j-\tfrac{1}{2}\right)  \right),\qquad j=1,\ldots,V,
\end{displaymath}
cf.~\cite[Lemma~2.4.5]{Spi09}; by allowing for graphs with multiple edges and replacing the edges of a path by $\eta$ parallel edges, one can easily prove that Fiedler's lower estimate is sharp for any $\eta$.  The upper estimate is also sharp: in fact, equality holds for complete graphs, the tighter inequality $\gamma_1(\mG)\le \eta{}$ holding for any other graph.

\medskip
By means of this and further results, Fiedler systematically investigated the spectral gap of a combinatorial graph, and in particular its dependence on the graph's connectivity. Thus, he paved the way for the birth of combinatorial spectral geometry, and he did so mostly by linear algebraic methods. Other estimates on $\gamma_1(\mG)$ can be obtained exploiting different techniques, as observed by Fiedler himself: indeed, it is well known that $\mathcal L=\mathcal I\mathcal I^T$, where $\mathcal I$ is the signed incidence matrix of an arbitrary orientation of the graph $\mG$ (a discrete counterpart of the divergence operator), and therefore
\begin{equation}
\label{eq:discrete-rayleigh-quotient}
	\gamma_1 (\mG) = \inf \left\{ \frac{\sum\limits_{\me \in \mE } |\mathcal I^T f(\me)|^2 }{ \sum\limits_{\mv \in \mV} 	|f(\mv)|^2} :\, 0 \neq f \in \R^\mV,\, \sum_{\mv \in \mV} f(\mv) = 0 \right\}.
\end{equation}
For instance, this formula immediately shows that deleting an edge decreases $\gamma_1$.

It is one of the main points of this paper to demonstrate that edge connectivity is in fact a natural quantity to study through its influence on the Rayleigh quotient; and that such methods extend to other types of graphs and operators thereon.  As a warm-up, we are going to provide in Section 2 an alternative, variational proof of Theorem~\ref{thm:fiedler-2}.  Our method allows us to characterize completely the case of equality \emph{and} to extend Fiedler's result with only minor changes to so-called $p$-Laplacians on graphs, a class of nonlinear operators that is gaining in prominence~\cite{hein2015mini}.  We will also obtain a Fiedler-like estimate on the lowest nonzero eigenvalue of the so-called normalized Laplacian of $\mG$, cf.\ Section~\ref{rem:otherthanfiedler}.

\medskip 

However, the main emphasis of this article is on differential Laplace operators on metric graphs -- that is, quantum graphs.  Let $\Graph$ be a graph where each edge $\me$ is identified with an interval $[0,\ell_{\me}]$ of the real line. This gives us a local variable $x_{\me}$ on the edge $e$ which can be interpreted geometrically as the distance from the initial vertex. Which of the two end vertices is to be considered initial is chosen arbitrarily; the analysis is independent of this choice.  

We are interested in the eigenproblem of the Laplacian
\begin{equation}
  -\frac{\partial^2}{\partial x^2} u_{\me}(x) = \lambda u_{\me}(x),
  \label{eq:metric_laplacian}
\end{equation}
where the functions $u$ are assumed to belong to the Sobolev space $H^{2}(\me)$
on each edge $\me$.  We will impose \emph{natural}\footnote{Also called Neumann, Neumann-Kirchhoff or standard conditions.} conditions at the vertices of the graph: we require that
$u$ be continuous on the vertices, $u_{\me_{1}}(\mv)=u_{\me_{2}}(\mv)$
for each vertex $\mv$ and any two edges, $\me_{1},\me_{2}$ incident to
$\mv$, and that the current be conserved,
\begin{equation}
  \label{eq:current_cons-2}
  \sum_{\me\sim \mv} \frac{\partial}{\partial x} u_{\me}(\mv) = 0
  \qquad \mbox{ for all vertices } \mv,
\end{equation}
where the summation is over all edges incident to a given vertex and the derivative is covariant into the edge (i.e.\ if $\mv$ is the final vertex for the edge $\me$, the whole term gets a minus sign). Further information can be found in the review \cite{GnuSmi_ap06}, the textbooks \cite{BerKuc_graphs,Mugnolo_book} or a recent elementary introduction \cite{Ber_prep16}.

Unless specified otherwise, the graphs we study are connected and compact; more precisely, we assume them to have finitely many edges, all of which have finite length. In this case, the spectrum of the Laplacian on the graph is discrete and the eigenvalues can be ordered by magnitude.  Under the conditions specified above, $0$ is always the lowest (and simple!) eigenvalue, hence we number our eigenvalues as
\begin{equation}
  \label{eq:eig_from0}
  0 = \eigO_0(\Graph) < \eigO_1(\Graph) \leq \eigO_2(\Graph) \leq \ldots
\end{equation}
It will be more natural to use a different numbering when we start changing our vertex conditions, see \eqref{eq:eig_from1} below.

In a recent article~\cite{K2M2_ahp16} three of the current authors (JBK, PK and DM) and Gabriela Malenová investigated whether certain combinatorial or metric quantities associated with a quantum graph, alone or combined, are sufficient to yield upper and/or lower estimates on \textit{the spectral gap of a quantum graph}, i.e., on the smallest nonzero eigenvalue $\eigO_1(\Graph)$ with natural vertex conditions, which can be characterized as
\begin{equation}\label{eq:rq}
\begin{split}
	\eigO_1(\Graph) &= \inf\left\{ \frac{\int_\Graph
            |u'(x)|^2 \dx}{\int_\Graph |u(x)|^2 \dx} :
	0\ne u \in H^1(\Graph),\ {\int_\Graph u(x) \dx}=0 \right\}\\
&=\inf\left\{ \mu>0:\int_\Graph \psi'(x)u'(x) \dx=\mu\int_\Graph \psi(x)u(x) \dx\hbox{ for some $\psi\in H^1(\Graph)$ and all }u\in H^1(\Graph)\right\}
	\end{split} 
\end{equation}
where $ H^1 (\Graph) $ denotes the space of functions belonging to the Sobolev space $ H^1 $ on every edge and continuous on the whole graph $\Graph$. Note that the second characterization in \eqref{eq:rq} simply corresponds to finding the smallest value $\mu>0$ for which there is a solution (i.e.~eigenfunction $\psi$) of the weak form of the eigenvalue equation \eqref{eq:metric_laplacian} with the given vertex conditions. The quantities considered in~\cite{K2M2_ahp16} were\footnote{To save space and avoid unnecessary repetition, we refer to \cite{K2M2_ahp16} for both general background information on the problems we are considering and the relevant graph-theoretic terminology.}
\begin{itemize}
\item the number $V$ of the graph's vertices (or usually just the number of \emph{essential} vertices, i.e., of all vertices with degree $\ne 2$)
\item the number $E$ of the graph's edges
\item the \textit{total length} $L$ of the quantum graph, i.e., the sum of all edges' lengths 
\item the \textit{diameter} $\diam{\Graph}$ of the quantum graph, i.e., the
  largest possible distance between any two points in the quantum
  graph (vertices or edges' internal points alike)
\end{itemize}
In particular, let us mention the upper and lower estimates
\begin{equation*}
  \frac{\pi^2}{L^2} \leq \eigO_1(\Graph) \leq \frac{\pi^2 E^2}{L^2}
\end{equation*}
obtained in \cite{Nic_bsm87} and \cite{K2M2_ahp16}, respectively.  The upper estimate is sharp in the case of both ``pumpkin'' and ``flower graphs'' --- somehow the quantum graph analog of complete graphs, as they are defined by the condition that any two edges are adjacent --- whereas the lower estimate becomes an equality in the case of path graphs. Further recently obtained eigenvalue estimates of a similar nature for quantum graph Laplacians can be found in \cite{BanLev_prep16,DelRos_amp16,Roh16_preprint}.

Whenever the graph has higher connectivity, and therefore no ``bottleneck'', one may hope for better lower estimates. In this article we are going to devote our attention to estimating $\eigO_1(\Graph)$, and more generally $\eigO_k(\Graph)$, by two further quantities related to ``connectedness'' of the graph, namely
\begin{itemize}
\item the \textit{edge connectivity} $\eta$ of the graph, and
\item the \textit{Betti number} $\beta$ of the graph, i.e., $E-V+1$, which is also the number of independent cycles in the graph.
\end{itemize} 
Deriving lower estimates on $\eigO_1(\Graph)$ based on $\eta$ will be the topic of Section~\ref{sec:qg-ec}: there, we prove an analog of Fiedler's estimate,
\begin{equation}\label{eq:eta-bound-intro}
	\eigO_1(\mathcal{G}) \geq \frac{\pi^2 \eta^2}{(L+(\eta-2)_+ \ell_{\max})^2},
\end{equation}
where $L>0$ is the total length of the graph, $\eta \in \mathbb{N}$ is the (algebraic or discrete) edge connectivity (see Definition~\ref{def:dec}) and $\ell_{\max}$ is the length of the longest edge of $\mathcal{G}$; see Theorem~\ref{thm:connectn}. This generalizes a recent theorem of Ram Band and Guillaume L\'evy \cite[Thm.~2.1(2)]{BanLev_prep16}, which dealt with the case $\eta=2$.
The correction term in the denominator involving $\ell_{\max}$ turns out to be necessary, as simple examples show; see Remark~\ref{rem:connectn1}.

We remark that the natural condition at a vertex of degree one reduces to the Neumann condition $u'(\mv) = 0$.  When we consider upper and lower estimates for high eigenvalues in Section~\ref{sec:qg_betti}, it becomes necessary also to allow Dirichlet conditions $u(\mv) = 0$ on vertices of degree one.  For eigenvalue problems with some Dirichlet conditions the more natural eigenvalue numbering is
\begin{equation}
  \label{eq:eig_from1}
  0 < \eigI_1(\Graph) < \eigI_2(\Graph) \leq \eigI_3(\Graph) \leq \ldots.
\end{equation}
For uniformity, we will use this numbering throughout Section~\ref{sec:qg_betti}, whether the graph has any Dirichlet vertices or not.  Using a symmetrization-based estimate (essentially derived in the proof of Theorem~\ref{thm:connectn}), an interlacing lemma from \cite{BanBerWey_jmp15} and other surgery principles (see, for example, \cite{BerKuc_incol12}, \cite[Section 3.1.6]{BerKuc_graphs} or \cite[\S~2]{KurMalNab_jpa13,K2M2_ahp16}), we provide concise proofs for several existing spectral gap estimates and discover new (or improved) ones.  In particular, for a tree with Dirichlet conditions imposed on \emph{all} vertices of degree one, we prove that
\begin{equation}
  \label{eq:diameter_estimate}
  \eigI_1(\Graph) \geq \frac{\pi^2}{D^2},
\end{equation}
where $D$ is the diameter of the graph.  Most notably, for \emph{all} eigenvalues
$\eigI_k(\Graph)$, we show that
\begin{equation}
  \label{eq:low_bound_all_eig_intro}
  \eigI_k(\Graph) \geq 
  \begin{cases}
    \left(k - \frac{|N|+\beta}2 \right)^2 \dfrac{\pi^2}{L^2}  
    & \mbox{if } k \geq |N| + \beta\\[10pt]
    \dfrac{k^2 \pi^2}{4 L^2} & \mbox{otherwise},
  \end{cases}
\end{equation}
which is an improvement of a lower bound by Friedlander \cite{Fri_aif05}, and 
\begin{equation}
  \label{eq:upp_bound_all_eig_intro}
  \eigI_k (\Graph)
  \leq \left(k - 2 + \beta + |D| + \tfrac{|N|+\beta}2\right)^2 \frac{\pi^2}{L^2},
\end{equation}
which is an improvement of a recent result of Ariturk \cite{Ari_prep16}.  Here we have denoted by $|D|$ and $|N|$ the number of vertices of degree one with Dirichlet and Neumann conditions, respectively; see Theorems~\ref{thm:low_bound_all_eig} and \ref{thm:upp_bound_all_eig} for further details and subsequent discussion.

Since we use variational techniques, we do not generally require linearity of the considered operators; effectively, only homogeneity of the Rayleigh quotients is used: this leads us to formulate most of our results for $p$-Laplace operators as well; in particular, this is true of our generalization of Fiedler's Theorem~\ref{thm:fiedler-2}, namely Theorem~\ref{thm:fiedler-p}, as well as  Theorem~\ref{thm:connectn}.  Let us recall their basic definition: 
For functions defined on the vertices $\mv\in \mV$ of a combinatorial graph, or on the edges $\me\in \mE$ of a metric graph, respectively, the $p$-Laplacian is defined as 
\begin{equation}\label{eq:p-defin-explicit-discr}
\Lfun_p f(\mv):=\mathcal I(|\mathcal I^Tf|^{p-2}\mathcal I^Tf)(\mv)=\sum_{\mw\sim \mv} \Big(|f(\mv)-f(\mw)|^{p-2}\big(f(\mv)-f(\mw)\big)\Big)\ ,\qquad \mv\in \mV\ 
\end{equation}
respectively as
\begin{equation}\label{eq:p-defin-explicit-contin}
\Delta_p u(x):=\left(|u'|^{p-2}u' \right)'(x),\qquad x\in \me,\ \me\in \mE
\end{equation}
(with suitable vertex conditions that will be specified later). The latter operators have not been extensively studied so far on graphs: we are only aware of~\cite{DelRos_amp16} and~\cite[\S~6.7]{Mugnolo_book}. Discrete $p$-Laplacians have a much longer history that goes back to~\cite{NakYam76}, see~\cite{BuhHei09,Mug_na13} for references. 
While the theory of $p$-Laplacians on domains and manifolds is very rich, and on graphs and quantum graphs would appear to have similar potential, all that we shall need is (nonlinear) variational eigenvalue characterizations entirely analogous to their linear counterparts: see \eqref{eq:discrete-rayleigh-quotient-p} for the combinatorial and \eqref{eq:cont-rayleigh-quotient-p} for the metric case, respectively. More theoretical background can be found, e.g., in~\cite[\S~3.2]{LanEdm11}.

%%%%%%%%%%%%%%%%%%%%%%%%%%%%%%%%%%%%%%%%%%%%%%%%%%%%%%%%%%%%%%%%
\section{Combinatorial graphs: Estimates based on the edge connectivity} 
\label{sec:comb-graph}

We will always make the following assumption on our combinatorial graphs, which we will generically denote by $\mG$. We denote by $\mV$ and $\mE$ the vertex and edge set of $\mG$, respectively.
\begin{assumption}\label{assump:discrete}
The combinatorial graph $\mG$ is connected and consists of a finite number of vertices and edges. Multiple edges between given pairs of vertices are allowed, but loops are not.
\end{assumption}

Allowing for multiple edges turns out to be natural in view of the relevant role played by the edges in the Rayleigh quotient of the discrete Laplacian $\mathcal L$. Forbidding loops is a standard assumption in spectral graph theory, since the incidence value of a loop has no natural definition.

Most of our results are based on two general methods: symmetrization techniques (originally adapted to quantum graphs in~\cite{Fri_aif05}), and general graph surgery.  
  
The Laplacian of a combinatorial graph (possibly with multiple edges) satisfying Assumption~\ref{assump:discrete} is a square matrix $\mathcal L$ of size $V$. Its $\mv-\mw$-entry $\mathcal L_{\mv\mw}$ is defined as follows:
  \begin{displaymath}
  \mathcal L_{\mv\mw}:=\begin{cases}
  -\#\{\me\in \mE: \me \hbox{ is incident to both }\mv,\mw\}\qquad &\hbox{if }\mv\ne \mw\\
  \#\{\me \in \mE: \me \hbox{ is incident to }\mv\}&\hbox{if }\mv= \mw\ .\\
  \end{cases}
  \end{displaymath}
It is known that 
\begin{equation}\label{eq:laplincid}
\mathcal L=\mathcal I\mathcal I^T\ ,
\end{equation}
 where $\mathcal I$ is the signed incidence matrix of an arbitrary orientation of $\mG$: it maps $\mathbb R^\mE$ to $\mathbb R^\mV$ and is defined by
\begin{displaymath}
\mathcal I^T f(\me):=f(\me_{init})-f(\me_{term}),\qquad \me\in \mE\ ,
\end{displaymath}
where $\mV,\mE$ are the sets of vertices and edges of $\mG$,
respectively, and for an (arbitrary) orientation of $\mG$ we denote by
$\me_{init},\me_{term}$ the initial and terminal endpoint of $\me\in
\mE$, respectively. A straightforward calculation shows that the formula~\eqref{eq:laplincid}, which is classical in the case of graphs with no multiple edges, is still true in our more general setting.

It is immediate that 0 is a simple eigenvalue of $\mathcal L$ and the associated eigenspace is the space of constant functions. Therefore, we are interested in the lowest nonzero eigenvalue $\gamma_1(\mG)$ of $\mathcal L$: Fiedler's estimates in~\eqref{eq:discconnectn} clearly show that $\gamma_1(\mG)$ is a measure of the connectedness of the graph. We will now extend Fiedler's result to discrete $p$-Laplacians, by a method significantly different from Fiedler's.

The smallest nontrivial, i.e.~nonzero, eigenvalue $\gamma_1^{(p)}$ of the \emph{discrete $p$-Laplacian} on $\mG$, where $p \in (1,\infty)$, is given by
\begin{equation}
\label{eq:discrete-rayleigh-quotient-p}
	\gamma_1^{(p)}(\mG) := \inf \left\{ \frac{\sum\limits_{\me \in \mE } |f(\me_{init})-f(\me_{term})|^p }
	{ \inf\limits_{\gamma\in \R}\sum\limits_{\mv \in \mV}|f(\mv)-\gamma|^p} :\, const \neq f \in \R^\mV \right\}.
\end{equation}
We refer to~\cite{BuhHei09,Mug_na13} and the references therein for 
more information, including motivations to study the $p$-Laplacian. It follows from
simple compactness arguments that there is a vector $\efd^{(p)}$ achieving
equality in \eqref{eq:discrete-rayleigh-quotient-p}.

\begin{remark}
In nonlinear operator theory there are several competing approaches to spectral analysis, and even the mere definition of eigenvalue is in general not unequivocal. Some justification of the choice of~\eqref{eq:discrete-rayleigh-quotient-p} as definition of the lowest nonzero eigenvalue of the discrete $p$-Laplacian $\mathcal L_p$ is therefore in order. To begin with, observe that 
 $\gamma_1^{(p)}(\mG)$ becomes equal to $\gamma_1(\mG)$   when $p=2$. Indeed,
  since $\inf_{\gamma\in\R} \|f-\gamma\|_2^2$ is achieved for $\gamma$
  such that $(f-\gamma) \perp 1$, and since the numerator is invariant
  under the change $f \mapsto f - \gamma$, we only need to consider
  $f$ orthogonal to 1.  Then~\eqref{eq:discrete-rayleigh-quotient-p} reduces to~\eqref{eq:discrete-rayleigh-quotient}.
Furthermore, it is known that with the variational definition~\eqref{eq:discrete-rayleigh-quotient-p} $\gamma_1^{(p)}(\mG)$ is actually an eigenvalue, cf.~\cite[Thm.~3.1]{BuhHei09}: in other words, $f$ realizes the $\inf$ in~\eqref{eq:discrete-rayleigh-quotient-p} if and only if
  \begin{equation}\label{eq:eigenvdiscrclass}
  \mathcal L_p f=\gamma^{(p)}_1(\mG) |f|^{p-2}f\ .
  \end{equation}
  \end{remark}

\begin{figure}
  \centering
  \includegraphics{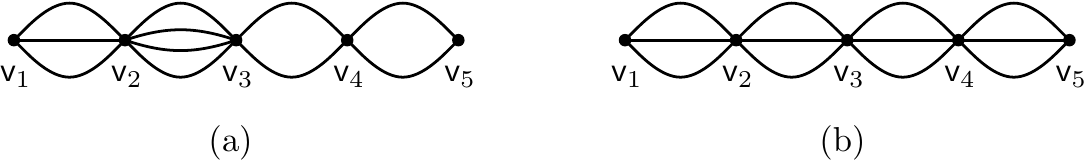}
  \caption{A non-regular pumpkin chain (a) and a 3-regular pumpkin chain (b).}
  \label{fig:pumpkins}
\end{figure}

\begin{theorem}\label{thm:fiedler-p}
Let $\mG$ be a combinatorial graph with edge connectivity $\eta$. Then for any $p\in (1,\infty)$ the eigenvalue $\gamma_1^{(p)}(\mG)$ of the discrete $p$-Laplacian on $\mG$, given by \eqref{eq:discrete-rayleigh-quotient-p}, satisfies
\begin{equation}
  \label{eq:discrete_p_estimate}
  \gamma_1^{(p)}(\mG)\ge \gamma_1^{(p)}(\widetilde \mG) = \eta\gamma_1^{(p)}(\mP_V)\ ,
\end{equation}
where $\gamma_1^{(p)}(\mP_V)$ denotes the corresponding eigenvalue of
the discrete $p$-Laplacian on a path graph $\mP_V$ on the same number
$V$ of vertices as $\mG$ and  $\widetilde \mG$ is an \emph{$\eta$-regular
pumpkin chain} with $V$ vertices (see below). Equality holds if and only if $\mG$
is an $\eta$-regular pumpkin chain. 
\end{theorem}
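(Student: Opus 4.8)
The plan is to establish the equality $\gamma_1^{(p)}(\widetilde \mG) = \eta\gamma_1^{(p)}(\mP_V)$ by a direct computation, and the inequality $\gamma_1^{(p)}(\mG) \geq \eta\gamma_1^{(p)}(\mP_V)$ by a monotone rearrangement argument applied to the Rayleigh quotient \eqref{eq:discrete-rayleigh-quotient-p}. For the equality, recall that in an $\eta$-regular pumpkin chain the $V$ vertices can be ordered as $\mv_1,\dots,\mv_V$ so that consecutive vertices are joined by exactly $\eta$ parallel edges and no other edges occur. Hence for every $f\in\R^\mV$ the numerator of \eqref{eq:discrete-rayleigh-quotient-p} equals $\eta\sum_{j=1}^{V-1}|f(\mv_{j+1})-f(\mv_j)|^p$, i.e.\ exactly $\eta$ times the numerator for $\mP_V$, while the denominators coincide. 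The two Rayleigh quotients therefore differ by the factor $\eta$ for every test function, and taking infima gives $\gamma_1^{(p)}(\widetilde \mG)=\eta\gamma_1^{(p)}(\mP_V)$ with identical minimizers.

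For the inequality, fix any non-constant $f\in\R^\mV$ and relabel the vertices as $\mv_1,\dots,\mv_V$ so that $a_1:=f(\mv_1)\le\cdots\le a_V:=f(\mv_V)$, writing $d_j:=a_{j+1}-a_j\ge0$. If an edge $\me$ joins $\mv_i$ to $\mv_k$ with $i<k$, then $|f(\me_{init})-f(\me_{term})|=\sum_{j=i}^{k-1}d_j$, and the superadditivity of $t\mapsto t^p$ on $[0,\infty)$ (valid since $p>1$) yields $|f(\me_{init})-f(\me_{term})|^p\ge\sum_{j=i}^{k-1}d_j^p$. Summing over all edges and interchanging the order of summation, the coefficient of $d_j^p$ becomes the number $c_j$ of edges joining $\{\mv_1,\dots,\mv_j\}$ to its complement; as this is a nontrivial edge cut, the definition of edge connectivity forces $c_j\ge\eta$ for every $j=1,\dots,V-1$. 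Thus the numerator for $\mG$ is at least $\eta\sum_{j=1}^{V-1}d_j^p$, which is $\eta$ times the numerator for $\mP_V$ at the rearranged function $g(\mv_j):=a_j$. Since the denominators agree, the Rayleigh quotient of $f$ on $\mG$ is at least $\eta$ times that of $g$ on $\mP_V$, hence at least $\eta\gamma_1^{(p)}(\mP_V)$; taking the infimum over $f$ proves \eqref{eq:discrete_p_estimate}.

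For the characterization of equality, the ``if'' direction is the computation of the first paragraph. For ``only if'', suppose equality holds and let $f=\efd^{(p)}$ be a minimizer, normalized by subtracting the optimal constant so that it is balanced and satisfies the eigenvalue equation \eqref{eq:eigenvdiscrclass}, and relabelled to be non-decreasing as above. Equality throughout the chain forces three things at once: the rearrangement $g$ must be a minimizer for $\mP_V$; for each $j$ with $d_j>0$ one must have $c_j=\eta$ (from $\sum_j(c_j-\eta)d_j^p=0$ with each summand nonnegative); and for each edge the superadditivity estimate must be an equality, which for $p>1$ means the edge spans at most one strictly positive gap. The crux is therefore to prove that the path minimizer $g$ is \emph{strictly} increasing, so that every $d_j>0$: granting this, every edge must join consecutive vertices $\mv_j,\mv_{j+1}$, of which there are exactly $c_j=\eta$, so that $\mG$ is an $\eta$-regular pumpkin chain.

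The main obstacle, and the one genuinely analytic step, is this strict monotonicity, which I would extract from \eqref{eq:eigenvdiscrclass} by a discrete flux argument. Setting $\phi_i:=(a_{i+1}-a_i)^{p-1}\ge0$, the eigenvalue equation at $\mv_1,\dots,\mv_{i}$ telescopes to $\phi_i=-\gamma_1^{(p)}(\mP_V)\sum_{l=1}^{i}|a_l|^{p-2}a_l$; meanwhile the balance condition $\sum_{l}|a_l|^{p-2}a_l=0$ (forced by summing \eqref{eq:eigenvdiscrclass} over all vertices), together with $a_1<0<a_V$, shows that each partial sum $\sum_{l=1}^{i}|a_l|^{p-2}a_l$ is strictly negative for $1\le i\le V-1$. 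Since $\gamma_1^{(p)}(\mP_V)>0$, this gives $\phi_i>0$, i.e.\ $a_i<a_{i+1}$ for all $i$, closing the equality analysis.
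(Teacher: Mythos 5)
Your proposal is correct, and at its core it is the same argument as the paper's: the inequality rests on the superadditivity $(\sum_j d_j)^p\ge\sum_j d_j^p$ (the paper's $\|x\|_p\le\|x\|_1$) together with the observation that the cut separating $\{\mv_1,\dots,\mv_j\}$ from its complement contains at least $\eta$ edges. The packaging differs: the paper realizes the estimate as a sequence of graph surgeries (each ``long edge'' is replaced by a path of new edges, after which excess parallel edges are deleted), checking that the Rayleigh quotient of a fixed minimizer decreases at each step, whereas you collapse the whole chain into a single one-shot comparison of the Rayleigh quotient of an arbitrary test function on $\mG$ with that of its monotone rearrangement on the path $\mP_V$; this spares you from having to verify that the surgery does not decrease the edge connectivity. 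The more substantive difference is in the equality case: the paper reduces it to the assertion that a first nontrivial eigenfunction of the discrete $p$-Laplacian on a path (or regular pumpkin chain) never takes equal values on neighbouring vertices, which it states with ``it can be shown'' and a reference covering only $p=2$, while you actually prove this for all $p\in(1,\infty)$ via the telescoped eigenvalue equation $\phi_i=-\gamma_1^{(p)}(\mP_V)\sum_{l\le i}|a_l|^{p-2}a_l$ and the strict negativity of the partial sums of the nondecreasing, zero-sum sequence $|a_l|^{p-2}a_l$. That flux argument is correct and fills in a step the paper leaves to the reader; the only external inputs you still rely on (existence of a minimizer, and the fact that minimizers of \eqref{eq:discrete-rayleigh-quotient-p} satisfy \eqref{eq:eigenvdiscrclass}) are the same ones the paper invokes.
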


Fiedler's original proof of Theorem~\ref{thm:fiedler-2}, the special case of the
above theorem for $p=2$, is of purely linear algebraic nature and is based on
his own notion of \textit{measure of irreducibility}. Our proof is based on
standard variational techniques applied to \textit{pumpkin chains}. Pumpkins
(sometimes also called \emph{mandarins} or \emph{dipole graphs} in the
literature) are connected graphs on two vertices and pumpkin chains are the
graphs obtained from path graphs by possibly replacing existing edges by
multiple ones. We call a pumpkin chain $k$-\emph{regular} if any two adjacent
vertices are connected by the same number $k$ of multiple edges, see
Fig.~\ref{fig:pumpkins}. It is immediate that an $\eta$-regular pumpkin chain
has edge connectivity $\eta$ and eigenvalues equal to $\eta$ times those of a
path graph on the same number of vertices. In particular, its first eigenvalue
$\gamma_1$ is given by the right hand side of \eqref{eq:discconnectn}, so
Fiedler's estimate is sharp and $\eta$-regular pumpkin chains are minimizers of
$\gamma_1$ for given $V$ and $\eta$.

Our own proof of Theorem~\ref{thm:fiedler-p} is based on the principle that one can decrease the first eigenvalue of a combinatorial graph by replacing an edge with a suitably chosen path between the same two vertices, depending on the corresponding eigenvector. To this end, we need the variational characterization \eqref{eq:discrete-rayleigh-quotient-p} of $\gamma_1^{(p)}$.  We will follow the usual practice of calling the quotient in \eqref{eq:discrete-rayleigh-quotient-p} the \emph{Rayleigh quotient} and shall denote it by $\RQ\mG{f}$. In general, the minimizer may not be unique, however its precise choice is irrelevant to us; we shall choose one minimizer and denote it by $\efd$.

\begin{proof}[Proof of Theorem~\ref{thm:fiedler-p}]
  To prove the inequality between eigenvalues we reconnect the vertices of the
  graph $\mG$ in a way that reduces the Rayleigh quotient at every step.  Note
  that we will leave the set of vertices intact, thus the same function can be
  used as a test function for all intermediate graphs.
  
  Starting with a given minimizer $\efd$ on $\mG$, as described above, we order the vertices of $\mG$
  so that 
  \begin{equation}
    \label{eq:vertex_ordering}
    \efd(\mv_1) \leq \efd(\mv_2) \leq \ldots \leq \efd(\mv_V).  
  \end{equation}
  Suppose there is an edge whose endpoints are $(\mv_i,\mv_j)$, for some $i <
  j$, $j-i \geq 2$ (a ``long edge'').  We form a new graph $\mG'$ by replacing
  this edge by the sequence of edges with endpoints $(\mv_i, \mv_{i+1})$,
  $(\mv_{i+1}, \mv_{i+2})$, \ldots, $(\mv_{j-1}, \mv_j)$.  Here it is essential
  that we allow for pairs of vertices to be connected by multiple edges: we
  create the edges $(\mv_i, \mv_{i+1})$, $(\mv_{i+1}, \mv_{i+2})$, \ldots,
  $(\mv_{j-1}, \mv_j)$ \emph{in addition} to the already existing edges between
  these vertices, see Figure~\ref{fig:path_replacement}.

  \begin{figure}
    \centering
    \includegraphics[scale=1]{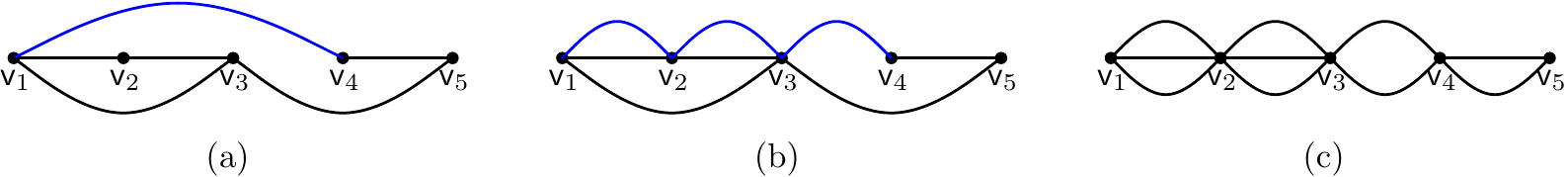}
    \caption{Replacing an edge by a path: (a) the original graph with
      the edge $\mv_1 \mv_4$ to be replaced shown in blue, (b) the graph
      obtained after replacing $(\mv_1,\mv_4)$ by three edges with endpoints
      $(\mv_1, \mv_2)$, $(\mv_2, \mv_3)$ and $(\mv_3 ,\mv_4)$ respectively (shown in blue), (c)
      the final graph $\widehat{\mG}$ after all replacements prescribed by
      the proof of Theorem~\ref{thm:fiedler-p}.}
    \label{fig:path_replacement}
  \end{figure}
  
  We claim that this operation decreases the Rayleigh quotient of the
  function $\efd$,
  \begin{displaymath}
    \RQ{\mG'}\efd \leq \RQ{\mG}\efd.
  \end{displaymath}
  Indeed, this is equivalent to the assertion
  \begin{equation}
    \label{eq:g-reduction}
    \sum_{\ell = i}^{j-1} |\efd(\mv_{\ell+1}) - \efd(\mv_{\ell})|^p 
    \leq |\efd(\mv_j)-\efd(\mv_i)|^p.
  \end{equation}
  Now by using the inequality $\|x\|_p \leq \|x\|_1$ between the $p$- and
  $1$-norms of a finite-dimensional vector (which is strict as long as $p>1$ and
  the vector has more than one nonzero component), applied to
  \begin{equation}\label{eq:x}
    x:=\big(\efd(\mv_{i+1})-\efd(\mv_i),\ldots,\efd(\mv_j)-\efd(\mv_{j-1})\big) \in \R^{j-i},
  \end{equation}
  we obtain
  \begin{align*}
    \left[\sum_{\ell=i}^{j-1} |\efd(\mv_{\ell+1})-\efd(\mv_\ell)|^p
    \right]^{1/p} 
    &\leq \sum_{\ell=i}^{j-1}|\efd(\mv_{\ell+1})-\efd(\mv_\ell)| \\
    &= \sum_{\ell=i}^{j-1} \efd(\mv_{\ell+1})-\efd(\mv_\ell) = \efd(\mv_j)-\efd(\mv_i),
  \end{align*}
  where the first equality here follows from the ordering of the vertices,
  equation~\eqref{eq:vertex_ordering}.  Taking the $p$-th power of both sides
  yields \eqref{eq:g-reduction}.
  
  Repeating this operation for every ``long edge'', we obtain a pumpkin chain
  $\widehat\mG$, which is in general non-regular.  Since the graph operation we
  performed cannot decrease the edge connectivity, $\widehat\mG$ has edge connectivity at least
  $\eta$, that is, each pumpkin in $\mG_1$ has at least $\eta$ parallel edges.  We
  now remove excess edges, producing an $\eta$-regular pumpkin chain $\widetilde
  \mG$.  It is immediate to see that removing an edge between $v_i$ and $v_{i+1}$ also
  decreases the Rayleigh quotient (strictly decreases if $\efd(v_i) \neq \efd(v_{i+1})$).

  We therefore have
 \begin{displaymath}
    \gamma_1^{(p)} (\widetilde \mG) \leq \RQ{\widetilde \mG}\efd
    \leq \RQ{\widehat\mG}\efd
    \leq \RQ{\mG}\efd
    = \gamma_1^{(p)} (\mG).
  \end{displaymath}
  This proves the inequality. 
  
  To characterize the case of equality we note that it can only happen if the
  original function $f_1$ is also a minimizer of the final graph $\widetilde
  \mG$ and thus an eigenfunction.  However, it can be shown that any 
  first nontrivial eigenfunction $f_1$
  of the discrete $p$-Laplacian on a regular pumpkin chain
  $\widetilde \mG$ (equivalently, on a path graph) does not take on the same
  value on neighboring vertices (see e.g.~\cite[Lemma 2.4.5]{Spi09} for the
  special case of $p=2$).

  For such a test function $f_1$, every subdivision of a ``long edge'' strictly
  decreases the Rayleigh quotient; so does every removal of an edge.  We
  conclude that no operations can have been performed and therefore $\widetilde
  \mG = \mG$.
\end{proof}

%%%%%%%%%%%%%%%%%%%%%%%%%%%%%%%%%%%%%%%%%%%%%%%%%%%%%%%%%%%%%
\section{Quantum graphs: Estimates based on the edge connectivity} 
\label{sec:qg-ec}

Let us now turn to the case of quantum graphs. It is natural to ask whether we can obtain a result similar to
Theorem~\ref{thm:fiedler-2} for the smallest nontrivial eigenvalue $\eigO_1(\Graph)$ of the Laplacian with natural vertex conditions on the metric graph $\mathcal{G}$, see~\eqref{eq:rq}. 
We recall that these vertex conditions require the functions to be continuous at every vertex and the sum of normal derivatives to be zero. But since we are going to work with quadratic forms, it is enough to require that our test functions be continuous at every vertex.

More generally, as in the discrete case, we will in fact consider the smallest nontrivial eigenvalue of the $p$-Laplacian with natural conditions. More precisely, we consider the following weak eigenvalue problem: we call $\mu>0$ an eigenvalue, and $\psi \in W^{1,p}(\Graph)$ an eigenfunction, if
\begin{equation}
\label{eq:p-lapl-weak-form}
	\int_{\Graph} |\psi'(x)|^{p-2}\psi'(x) u'(x)\dx=\mu\int_{\Graph} |\psi(x)|^{p-2}\psi(x)u(x)\dx\qquad
	\text{for all } u \in W^{1,p}(\Graph),
\end{equation}
for fixed $p \in (1,\infty)$. Here the Sobolev space $W^{1,p}(\Graph) \subset C(\Graph)$ is defined in the usual way, as the vector space of those edgewise $W^{1,p}$-functions that satisfy continuity conditions in the vertices. The equation \eqref{eq:p-lapl-weak-form} is the weak form of an eigenvalue problem for the $p$-Laplace operator $-\Delta_p$ given by $-\Delta_p u = -(|u'|^{p-2}u')'$ on the same metric graph $\Graph$, again with continuity conditions in the vertices along with Kirchhoff-type conditions
\begin{equation}
  \label{eq:current_cons}
  \sum_{\me\sim \mv} \left|\frac{\partial}{\partial x} u_{\me}(\mv)\right|^{p-2}\frac{\partial}{\partial x} u_{\me}(\mv) = 0
  \qquad \mbox{ for all vertices } \mv,
\end{equation}
where the summation is again taken over all edges incident to $\mv$ and the derivative is covariant into the edge. (We stress the similarity to~\eqref{eq:eigenvdiscrclass}.) If $p=2$, we obviously recover the usual Laplacian. Here and below, we will consider the quantity
\begin{equation}\label{eq:cont-rayleigh-quotient-p}
	\eigO_1^{(p)}(\mathcal{G}):=\inf \{\mu>0: \mu \text{ is an eigenvalue in the sense of \eqref{eq:p-lapl-weak-form}}\}.
\end{equation}
When $p=2$, we recover the smallest nontrivial Laplacian eigenvalue, i.e., $\eigO_1^{(2)}(\mathcal{G})=\eigO_1(\mathcal{G})$ defined by \eqref{eq:rq}.\footnote{We also expect for general $p\in (1,\infty)$ that the infimum in \eqref{eq:p-lapl-weak-form} will be a minimum, i.e., $\eigO_1^{(p)}(\Graph)$ will be an eigenvalue. In the case of intervals, this is known, and indeed, the corresponding eigenvalue has a variational characterization in terms of the \emph{Krasnoselskii genus}, cf.~\cite[Chapt.~3]{LanEdm11} and~\cite{BinRyn08} for more information and a description of various variational quantities which, in competing senses, are nonlinear generalizations of $\mu_1^{(2)}$. On graphs these variational characterizations seem to be unknown, although we very much expect them to hold.}

We shall also have occasion to consider the first eigenvalue of the $p$-Laplacian with the Dirichlet (zero) condition on a subset $\mV_D$ of the set of vertices of $\mathcal{G}$. In this case, we shall write $W^{1,p}_0 (\mathcal{G}) := \{u \in W^{1,p}(\mathcal{G}): u(\mv)=0 \text{ for all } \mv \in \mV_D\}$ and
\begin{equation}
\label{eq:dirichlet-rayleigh-quotient-p}
	\eigI_1^{(p)}(\mathcal{G}) := 
	\inf \left\{  \frac{\int_{\Graph} |u'(x)|^p\dx}{\int_{\Graph} |u(x)|^p\dx} :
	u \in W^{1,p}_0(\Graph) \right\}\ ,\qquad p\in (1,\infty)
\end{equation}
(note that this is now always an eigenvalue, and indeed the very smallest). Of course, both $W^{1,p}_0 (\mathcal{G})$ and $\eigI_1^{(p)} (\mathcal{G})$ depend on the set $\mV_D$, but in practice it will always be clear which set this is.

We shall keep the following standing assumption on $\mathcal{G}$ throughout the paper, which mirrors Assumption~\ref{assump:discrete} for combinatorial graphs.

\begin{assumption}
  \label{assumption:finite}
 The metric graph $\mathcal{G}$ is connected, compact and finite,
  i.e., it is formed by a  finite
  number $E$ of edges of finite length connected together at a finite number $V$ of vertices.
Moreover, we assume that $\mathcal{G}$ does not have any vertices of degree two. \end{assumption}

Note that we allow loops (except when we explicitly assume the contrary) and multiple edges.

In order to explain the assumption about vertices of degree two, let us recall that introducing such a vertex for $ p=2 $, natural vertex conditions imply that
both the function and its first derivative are continuous across the vertex. Hence the vertex may be removed and the two edges may be substituted by
one longer edge so that the eigenvalues of the quantum graph are preserved as well as its eigenfunctions up to a canonical identification. This is a well-known
phenomenon in the theory of quantum graphs when $p=2$. For $ p \neq 2$ exactly the same phenomenon can be observed, since any continuous and piecewise-$W^{1,p}$ function belongs to $W^{1,p}$.

We now wish to introduce a notion of edge connectivity for a metric graph $\mathcal{G}$. To cut an edge means to divide it into two smaller edges
with the sum of lengths equal to the length of the original edge, keeping the vertices of the original graph and introducing two new degree one vertices.  
Then it is natural to use the following definition:

\begin{definition}\label{def:connectivity}
  Let $\Graph$ be a metric graph. The \emph{metric edge connectivity} $\eta$
  of $\Graph$ is equal to the number of edge cuts needed to make the graph
  disconnected.
\end{definition}

It is clear that the metric edge connectivity is at most two, since one may
always cut any given edge twice.  In the recent paper \cite{BanLev_prep16}, Band and
L\'evy obtained the lower bound
\begin{equation}\label{eq:bl16}
	\eigO_1(\Graph) = \eigO_1^{(2)}(\Graph) \geq \frac{4\pi^2}{L^2}
\end{equation}
for all quantum graphs of length $L>0$ and edge connectivity two in this sense.
This result includes, in particular, Eulerian graphs, for which the same
bound was proved in \cite[Thm.~2]{KurNab_jst14}.

The proof of Band and L\'evy is based on a refinement of the symmetrization argument introduced by Friedlander \cite{Fri_aif05} used in his proof of the isoperimetric inequality $\eigO_1(\Graph) \geq \pi^2/L^2$. This latter inequality together with \eqref{eq:bl16} may be thought of as a natural counterpart to Fiedler's estimate if we regard the edge connectivity of a quantum graph as being always either one or two.

We shall also introduce \emph{discrete edge connectivity}, which we believe is better adapted to quantum graphs (despite its name!). In particular, with it we can obtain better bounds if the discrete edge connectivity is more than two.

We have already noticed that degree two vertices can be removed without really affecting the metric graph, leading to equivalent graphs. 
The only exception is the cycle graph which contains just one vertex,
which of course is impossible to eliminate. The discrete edge connectivity will not distinguish between equivalent metric graphs.

\begin{definition}\label{def:dec}
  Let $\Graph$ be a finite metric graph.
  If the graph possess vertices of degree two, then consider the equivalent metric graph obtained from $ \Graph $ by removing
  vertices of degree two (if possible).
   If the number of vertices in the new graph is greater than one, then  the \emph{discrete edge
    connectivity} $\eta$ of $\Graph$ is the minimal number of edges that
  need to be deleted in order to make the new graph disconnected.

 In the degenerate case of a graph with one vertex (a flower graph including the cycle graph as a special case) we let $\eta = 2$.
\end{definition}

The discrete edge connectivity is finer; it coincides with the metric one when it is equal to one or two.  Henceforth we will use only the discrete connectivity while keeping the same notation $\eta$.

Our main result is as follows.

\begin{theorem}\label{thm:connectn}
Let $\Graph$ be a connected quantum graph of total length $L$ and discrete edge connectivity $\eta$, and let $p \in (1,\infty)$. If $\eta \geq 2$, then
\begin{equation}\label{eq:connectp2}
	\eigO_1^{(p)}(\mathcal{G}) \geq (p-1) \left(\frac{2\pi_p}{L}\right)^p .
\end{equation}
Moreover, if $\eta \geq 3$, then
\begin{equation}\label{eq:connectn}
	\eigO_1^{(p)} (\Graph)\geq (p-1)\left(\frac{\eta\pi_p}{L+\ell_{\max}(\eta-2)}\right)^p,	
\end{equation}
where $\ell_{\max}$ is the length of the longest edge of $\mathcal{G}$. 
\end{theorem}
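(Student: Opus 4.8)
The plan is to reduce \eqref{eq:connectp2}--\eqref{eq:connectn} to a one-dimensional, weighted $p$-Laplacian Poincaré inequality by symmetrization, letting the edge connectivity enter through a lower bound on the number of points in the level sets of an eigenfunction. Since $\eigO_1^{(p)}(\Graph)$ is by \eqref{eq:cont-rayleigh-quotient-p} an infimum of eigenvalues, it suffices to bound below the Rayleigh quotient of one eigenfunction: if $\psi$ solves \eqref{eq:p-lapl-weak-form} for $\mu$, then testing with $u=\psi$ and with $u\equiv 1$ gives $\mu=\int_\Graph|\psi'|^p\,dx\big/\int_\Graph|\psi|^p\,dx$ together with $\int_\Graph|\psi|^{p-2}\psi\,dx=0$. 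First I would use that $\psi$ is edgewise $C^1$ and apply the coarea formula on each edge; writing $m=\min\psi$, $M=\max\psi$, $N(t)=\#\psi^{-1}(t)$, $A(t)=\sum_{x\in\psi^{-1}(t)}|\psi'(x)|^{p-1}$ and $B(t)=\sum_{x\in\psi^{-1}(t)}|\psi'(x)|^{-1}$, this yields
\[
 \int_\Graph|\psi'|^p\,dx=\int_m^M A(t)\,dt,\qquad \int_\Graph|\psi|^p\,dx=\int_m^M|t|^p B(t)\,dt,\qquad L=\int_m^M B(t)\,dt .
\]

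Next, Hölder's inequality applied to the $N(t)$ summands gives $N(t)\le A(t)^{1/p}B(t)^{(p-1)/p}$, hence $A(t)\ge N(t)^p B(t)^{1-p}$. Introducing the monotone increasing rearrangement $w\colon[0,L]\to[m,M]$ defined as the inverse of $t\mapsto\int_m^t B$, so that $w'=1/(B\circ w)$ and $B\,dt=dx$, the three identities combine into
\[
 \mu=\frac{\int_\Graph|\psi'|^p\,dx}{\int_\Graph|\psi|^p\,dx}\ \ge\ \frac{\int_0^L n(x)^p\,|w'(x)|^p\,dx}{\int_0^L|w(x)|^p\,dx},\qquad n(x):=N(w(x)),
\]
where $w$ is monotone and still satisfies $\int_0^L|w|^{p-2}w\,dx=0$. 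Everything is thus reduced to bounding from below the first nontrivial Neumann eigenvalue of the weighted operator $-(n^p|w'|^{p-2}w')'=\nu\,|w|^{p-2}w$ on $[0,L]$; crucially, monotonicity of $w$ places the large values of the weight $n$ near the \emph{middle} of $[0,L]$ and the small values near its two ends.

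The conceptual heart, and the step I expect to be the main obstacle, is the lemma that produces the weight. For a regular value $t$ the open sets $\{\psi>t\}$ and $\{\psi<t\}$ partition $\Graph\setminus\psi^{-1}(t)$, and whenever \emph{both} contain a vertex the edges on which $\psi$ crosses the value $t$ constitute an edge cut of the graph with its degree-two vertices suppressed; by Definition~\ref{def:dec} there are then at least $\eta$ such edges, each contributing a point, so $n\ge\eta$. The count can fall below $\eta$ only when one of the two sets, say $\{\psi>t\}$, contains no vertex; being open and vertex-free it then lies inside a single edge and merely gives $n\ge2$. This occurs precisely for values $t$ exceeding the largest value attained by $\psi$ at a vertex, so the corresponding set of $x$ is an end-interval of $[0,L]$ whose length equals $|\{\psi>t\}|\le\ell_{\max}$; symmetrically near $m$. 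Making this rigorous — allowing extrema attained at vertices or at several points, disconnected super-level sets, and the null set of irregular values — is the delicate part, but the outcome is that $n\ge\eta$ off two end-intervals of length at most $\ell_{\max}$ each, where still $n\ge 2$.

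Since the quotient increases with $n$, I would finally replace $n$ by the symmetric step weight equal to $2$ on two end-intervals of length $\ell_{\max}$ and to $\eta$ in between, which only decreases the eigenvalue and makes $\int_0^L dx/n$ as large as $\ell_{\max}+(L-2\ell_{\max})/\eta=(L+(\eta-2)\ell_{\max})/\eta=:\widetilde L$. For this explicit weight the symmetry of the problem together with $\int|w|^{p-2}w=0$ forces the first nontrivial eigenfunction to be odd about $x=L/2$ (it has a single sign change), reducing matters to a two-piece problem on $[0,L/2]$ with a Neumann condition at $0$ and a Dirichlet condition at $L/2$; solving each piece with the $p$-trigonometric functions $\sin_p,\cos_p$ and matching $w$ and the flux $n^p|w'|^{p-2}w'$ at the interface leads to a transcendental matching condition (for $p=2$, explicitly $2\tan\alpha=\eta\cot\beta$) in the scaled phases with $\alpha+\beta=\tfrac12\,\nu^{1/p}(p-1)^{-1/p}\widetilde L$. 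A shooting/monotonicity (Prüfer-type) comparison — at $\alpha+\beta=\pi_p/2$ the left-hand side is strictly smaller than the right because $\eta\ge 3>2$ — shows the smallest positive solution obeys $\alpha+\beta\ge\pi_p/2$, that is $\nu\ge(p-1)(\pi_p/\widetilde L)^p=(p-1)\big(\eta\pi_p/(L+(\eta-2)\ell_{\max})\big)^p$, which is \eqref{eq:connectn}; carrying out this crossing estimate cleanly for all $p\in(1,\infty)$, where the $p$-trigonometric identities are more delicate than for $p=2$, is the second point demanding genuine care. When $\eta=2$ the two end-intervals play no role, $n\equiv 2$, $\widetilde L=L/2$, and the same computation collapses to \eqref{eq:connectp2}.
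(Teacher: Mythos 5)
Your overall strategy --- coarea plus H\"older to pass to a one-dimensional problem, with the connectivity entering through a lower bound on the level-set count --- is the same mechanism the paper uses, but your reduction has a genuine gap at precisely the step you flag as delicate. The set $\{\psi>\max\excset\}$ is open and vertex-free, but that only means each of its \emph{connected components} lies inside a single edge; the set itself can consist of several arcs sitting in different edges, so its total measure can exceed $\ell_{\max}$. On the corresponding terminal piece of $[0,L]$ the count is $n(x)=2n'$, where $n'$ is the number of arcs reaching above $w(x)$, and all one knows there is $2n'<\eta$. Consequently your step weight (equal to $\eta$ outside two end-intervals of length $\ell_{\max}$) is \emph{not} a pointwise minorant of the true weight $n$, and the monotonicity argument ``the quotient increases with $n$'' cannot be invoked. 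The paper confronts exactly this configuration: after normalizing so that $\max\excset=\esssup\{t:\nu(t)\ge\eta\}$ is attained at a single vertex, the super-level region becomes $n$ loops with $2n<\eta$, each contained in a single original edge (hence each of length $\le\ell_{\max}$, though their \emph{sum} need not be); it then lengthens all loops to match the longest one $\hat e_1$, splits them, and pads with $\eta-2n$ extra Neumann pendant edges, the point being that the comparison interval has length $|\hat e_1|/2+|\mathcal{G}_\excset|/\eta$ and the budget inequality $|\mathcal{G}_\excset|\le|\mathcal{G}_+|-|\hat e_1|$ only charges for \emph{one} loop. Your argument needs an analogous device before an explicit minorizing weight can be written down.

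A second, smaller issue concerns $p\ne 2$: you keep the whole graph and the constraint $\int_0^L|w|^{p-2}w\,dx=0$, but for $p\ne2$ a constrained minimizer of the weighted quotient satisfies an Euler--Lagrange equation carrying a nonzero Lagrange multiplier from the constraint, so it need not solve the weighted eigenvalue ODE, and your shooting/matching computation does not directly bound the constrained minimum; nor is oddness of the minimizer about $L/2$ automatic in the nonlinear setting. The paper sidesteps this by working only on $\mathcal{G}_+$ (after arranging $|\mathcal{G}_+|\le L/2$) and comparing with a genuine Dirichlet ground state, which is an unconstrained minimization. Your crossing estimate $2\tan\alpha=\eta\cot\beta$ is fine for $p=2$; for general $p$ you should likewise split $[0,L]$ at the zero of $w$ and run the argument on the half where $w\ge0$ with a Dirichlet condition at that zero.
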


Here
\begin{displaymath}
\pi_p:=\frac{2\pi}{p\sin\frac{\pi}{p}}\ .
\end{displaymath}
It is known that $\pi_p$ is the smallest positive root of $\sin_p$, defined on $[0,\frac{\pi_p}{2}]$ as the inverse of 
\begin{displaymath}
s\mapsto \int_0^x (1-s^p)^{-\frac{1}{p}}\ ds
\end{displaymath}
and then suitably extended to $\R$ by reflection and periodicity.\footnote{The so-called $p$-trigonometric functions $\sin_p$ and $\cos_p:=\sin_p'$ (with $1/p+1/p'=1$) play in the theory of $p$-Laplacians a similar role to the trigonometric functions in the case $p=2$, but with some notable differences; for instance, while $\sin_p$ is an eigenfunction for the $p$-Laplacian with Dirichlet boundary conditions on $[-\frac{\pi_p}{2},\frac{\pi_p}{2}]$, its derivative $\cos_p:\R\to \R$ is continuous but not twice continuously differentiable for $p<2$, and not even continuously differentiable if $p>2$.} The values on the right hand sides of \eqref{eq:connectp2} and \eqref{eq:connectn} correspond to the first nontrivial eigenvalue of the Neumann Laplacian on an edge of length $L/2$ and $(L+\ell_{\max}(\eta-2))/\eta$, respectively. See, e.g., the monograph~\cite{LanEdm11} for more details.

\begin{corollary}\label{cor:connectncor}
Under the assumptions of Theorem~\ref{thm:connectn} we have in particular for $p=2$
\begin{equation}\label{eq:connect2n}
	\eigO_1 (\mathcal{G}) \geq \frac{\eta^2\pi^2 }{(L+\ell_{\max}(\eta-2)_+)^2}.
\end{equation}
\end{corollary}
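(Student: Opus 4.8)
Since $\eigO_1(\Graph)=\eigO_1^{(2)}(\Graph)$, the corollary is exactly the case $p=2$ of Theorem~\ref{thm:connectn}, so the plan is simply to specialize. At $p=2$ we have $p-1=1$ and $\pi_2=\frac{2\pi}{2\sin(\pi/2)}=\pi$, so \eqref{eq:connectp2} reads $\eigO_1(\Graph)\ge 4\pi^2/L^2$ when $\eta\ge2$, while \eqref{eq:connectn} reads $\eigO_1(\Graph)\ge \eta^2\pi^2/(L+(\eta-2)\ell_{\max})^2$ when $\eta\ge3$. The only point to verify is that the single expression \eqref{eq:connect2n} covers both regimes: for $\eta=2$ the positive part $(\eta-2)_+$ vanishes and the bound reduces to $4\pi^2/L^2$, matching \eqref{eq:connectp2}, whereas for $\eta\ge3$ one has $(\eta-2)_+=\eta-2$ and the bound is precisely the one coming from \eqref{eq:connectn}. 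Thus modulo Theorem~\ref{thm:connectn} there is nothing more to do, and there is no genuine obstacle at this level.

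Because all of the content sits in Theorem~\ref{thm:connectn}, let me indicate how I would obtain the inequality directly when $p=2$, where the argument is cleanest. The idea is to rearrange a minimiser $u$ of \eqref{eq:cont-rayleigh-quotient-p} into a monotone function on a short interval and compare with the first nontrivial Neumann eigenvalue $\pi^2/\ell^2$ of an interval of length $\ell$; since this is decreasing in $\ell$, it suffices to build a monotone test function $v$ on an interval of length $\ell\le L/2$ (resp. $\ell\le (L+(\eta-2)\ell_{\max})/\eta$) whose Rayleigh quotient is at most $\eigO_1(\Graph)$. The engine is the coarea formula: writing $M=\max_\Graph u$, $m=\min_\Graph u$ and $N(t)=\#\{x:u(x)=t\}$ for a regular value $t$, one has
\[
 \int_\Graph |u'|^2\dx=\int_m^M S_1(t)\,dt,\qquad L=\int_m^M S_{-1}(t)\,dt,\qquad \int_\Graph|u-c|^2\dx=\int_m^M|t-c|^2 S_{-1}(t)\,dt,
\]
with $S_1(t)=\sum_{u(x)=t}|u'(x)|$ and $S_{-1}(t)=\sum_{u(x)=t}|u'(x)|^{-1}$. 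A monotone profile $v$ of slope $w(t)$ at height $t$ satisfies the same identities with $S_1,S_{-1}$ replaced by $w,w^{-1}$ and with $\ell=\int_m^M w^{-1}\,dt$, and Cauchy--Schwarz supplies $N(t)^2\le S_1(t)S_{-1}(t)$, which is what collapses the $N(t)$ preimages over a level into one branch at the cost of a factor $N(t)^2$.

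The connectivity enters through $N(t)$: whenever both $\{u>t\}$ and $\{u<t\}$ contain a vertex of the degree-two-suppressed graph, the edges crossing level $t$ form a disconnecting set, so $N(t)$ is at least the number of such edges, hence at least $\eta$; and $N(t)\ge2$ holds for every $t\in(m,M)$. For $\eta\ge2$ this already suffices: taking $w=2/S_{-1}$ makes $\ell=L/2$, multiplies $\int_m^M|t-c|^2 S_{-1}\,dt$ by exactly $\tfrac12$ for every $c$, and, using $N\ge2$ in the form $S_{-1}^{-1}\le S_1/N^2\le S_1/4$, bounds the energy $\int_m^M w\,dt$ by $\tfrac12\int_m^M S_1\,dt$. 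Hence the Rayleigh quotient of $v$ is at most that of $u$, and $\eigO_1(\Graph)\ge\pi^2/(L/2)^2=4\pi^2/L^2$.

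For $\eta\ge3$ I would use the stronger factor $w=\eta/S_{-1}$ wherever $N(t)\ge\eta$; this is licensed on the \emph{core} of height values but fails near an extremum attained in the interior of an edge, where the relevant super- or sublevel set lies inside a single edge and $N(t)$ can drop to $2$. Keeping factor $\eta$ on the core and reverting to factor $2$ on the two resulting ``caps'' lengthens the comparison interval by $(\tfrac12-\tfrac1\eta)$ times the total cap length; as each cap is contained in one edge, this total is at most $2\ell_{\max}$, producing the effective length $\ell\le (L+(\eta-2)\ell_{\max})/\eta$. The hard part is exactly this cap analysis: with a non-constant factor one can no longer rescale the denominator by a single constant as at $\eta=2$, so the extra energy injected on the caps must be absorbed — either by balancing it against the large values of $|t-c|^2$ there (the caps sit near the extreme heights, far from the optimal $c$), or, more robustly, by excising the caps via a graph surgery and applying the clean factor-$\eta$ construction to the remaining graph, on which $N(t)\ge\eta$ throughout.
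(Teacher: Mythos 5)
Your first paragraph is exactly the paper's (implicit) proof: Corollary~\ref{cor:connectncor} is the immediate specialization of Theorem~\ref{thm:connectn} to $p=2$, where $p-1=1$ and $\pi_2=\pi$, and the unified expression with $(\eta-2)_+$ covers the regimes $\eta=2$ and $\eta\geq 3$ exactly as you verify. Your supplementary sketch of the theorem itself is not needed for the corollary and in any case follows the same Friedlander-type coarea/symmetrization strategy as the paper, which resolves your ``cap'' difficulty by symmetrizing only the positivity set of the eigenfunction onto a star with loops attached at the vertex $\mv_{\max}$ (the stower $\mathcal{G}_+^\ast$) and then splitting the loops into pendant Neumann edges.
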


\begin{remark}
\label{rem:connectn1}
(a) The estimate \eqref{eq:connectp2} is sharp, but \eqref{eq:connectn} is not, except asymptotically for certain sequences of graphs for which $\ell_{\max} \to 0$, 
provided $\eta$ is kept fixed. To give examples, we will need details from the proof of Theorem~\ref{thm:connectn} (including classes of graphs which will be introduced there). Hence we defer the proof of both statements until after this proof; see Remark~\ref{rem:equality}. There, we also characterize the case of equality in \eqref{eq:connectp2}. This was already treated in \cite{BanLev_prep16} when $p=2$.

(b) A correction term along the lines of $(\eta-2) \ell_{\max}$ is however necessary if $\eta \geq 3$. It may be viewed as consequence of using the discrete edge connectivity. Indeed, a lower bound of the form $\pi_p^p \eta^p/L^p$ is obviously impossible, even for $p=2$: take, for example, a very small pumpkin on $\eta$ slices and attach a single, very long loop to one of its vertices. 

(c) The bound in \eqref{eq:connectn} is an increasing function of $\eta \geq 2$ if (and only if) $L \geq 2\ell_{\max}$. This follows from the fact that the length of the comparison interval, i.e., $(L+(\eta-2)_+ \ell_{\max})/\eta$, is a decreasing function of $\eta \geq 2$ if and only if $L \geq 2\ell_{\max}$, together with the fact that the first eigenvalue of an interval is a decreasing function of the latter's length, for any $p \in (1,\infty)$. An inspection of the proof of Theorem~\ref{thm:connectn} shows that in fact
\begin{displaymath}
	\eigO_1^{(p)}(\mathcal{G}) \geq
	(p-1) \left(\frac{h\pi_p}{L+(h-2)\ell_{\max}}\right)^p
\end{displaymath}
for all $1 \leq h \leq \eta$ (just replace $\eta$ with $h$ throughout). Hence we may obtain a better bound than \eqref{eq:connectn} by taking the maximum over such $h$ if $L < 2\ell_{\max}$. Moreover, in the case $L < 2\ell_{\max}$, i.e., if a single edge ``dominates'' $\mathcal{G}$, then, by taking as a test function an appropriate $p$-sinusoidal curve on the longest edge extended by zero elsewhere on $\mathcal{G}$, we can obtain a trivial upper bound which shows $\eigO_1^{(p)}$ is essentially proportional to the longest edge length, meaning algebraic quantities such as edge connectivity are less relevant. For example, if $p=2$, this test function argument yields
\begin{displaymath}
	\eigO_1(\Graph) \leq \frac{4\pi^2}{(\ell_{\max})^2}
\end{displaymath}
to complement the lower bound of \eqref{eq:bl16}, namely
\begin{displaymath}
	\eigO_1(\Graph) \geq \frac{4\pi^2}{L^2} \geq \frac{\pi^2}{(\ell_{\max})^2}.
\end{displaymath}
\end{remark}

Let us now prepare for the proof of Theorem~\ref{thm:connectn}. We will need the following simple lemma, which provides an important technical link between the edge connectivity (in both discrete and continuous form) and the behavior of any $p$-Laplacian eigenfunction.

We will first need the following notation: denoting by $\mu>0$ any given eigenvalue of \eqref{eq:p-lapl-weak-form} and by $\psi \in W^{1,p} (\Graph) \subset C(\Graph)$ any eigenfunction corresponding to $\mu$, we set
\begin{equation}\label{eq:Gpm}
\begin{split}
	\mathcal{G}_+ &:= \{x \in \Graph : \psi (x) > 0\},\\
	\mathcal{G}_- &:= \{ x \in \Graph : \psi (x) < 0 \},\\
	\lvlsf(t) &:= \{ x \in \Graph : \psi (x) = t \}, \qquad t \in \R,
\end{split}
\end{equation}
noting that since $\mu>0$, both $\mathcal{G}_+$ and $\mathcal{G}_-$ must be nonempty. (Just take $u(x)=1$ in \eqref{eq:p-lapl-weak-form} to obtain the ``$p$-orthogonality condition'' $\int_{\Graph} |\psi(x)|^{p-2}\psi(x)\dx=0$, which forces $\psi$ to change sign in $\Graph$.) In general, $\mathcal{G}_\pm$ will not be connected. We also denote by
\begin{displaymath}
	\nu (t) := \# \lvlsf(t)
\end{displaymath}
the size of the level ``surface'' of $\psi$ at $t$, noting this is
finite whenever $t \neq 0$ as our graph is finite; it is possible that
$\psi$ is identically zero on some edge of $\Graph$, in which case
we write $\nu (0) = \infty$; observe that $0$ is the only constant
value that can be attained by an eigenfunction of a nonzero eigenvalue
on an open interval in $\Graph$. Finally, we denote by
\begin{displaymath}
  \excset := \psi (\mV) \equiv \{\psi (\mv) : \mv \in \mV \}
\end{displaymath}
the (finite) set of all values $\psi$ attains at the vertex set of $\Graph$ and by
\begin{displaymath}
	\excset^\ast := \excset \cup \{ \max_{x \in \Graph} \psi (x), \min_{x \in \Graph} \psi (x) \}
\end{displaymath}
the complete (and still finite) set of ``exceptional values'' of $\psi$.

\begin{lemma}
  \label{lemma:sizeofnu}
  Suppose with the above notation that the discrete edge connectivity $\eta \geq 2$. Then
  \begin{enumerate}
  \item[(i)] $\nu (t) \geq 2$ for all $t \in (\min_{x \in \Graph}
    \psi (x), \max_{x \in \Graph} \psi (x)) \setminus \excset$;
  \item[(ii)] $\nu (t) \geq \eta$ for all $t \in (\min \excset, \max \excset) \setminus \excset$.
  \end{enumerate}
\end{lemma}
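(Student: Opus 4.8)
The plan is to exploit a single structural fact: since $\psi$ is continuous on $\Graph$ (recall $W^{1,p}(\Graph)\subset C(\Graph)$) and $t\notin\excset$, the level set $\lvlsf(t)$ meets no vertex, so each point of $\lvlsf(t)$ lies in the interior of an edge, and every connected component of $\Graph\setminus\lvlsf(t)$ carries $\psi$ of one fixed sign relative to $t$ (either $\psi>t$ or $\psi<t$ throughout). The whole argument then reduces to counting how many interior edge points must be deleted to separate the two sign regions, and translating this count into a statement about \emph{edge cuts} of the underlying combinatorial graph. Here it is essential that Assumption~\ref{assumption:finite} forbids vertices of degree two, so that (away from the degenerate flower case, where (ii) is vacuous) the discrete edge connectivity $\eta$ coincides with the ordinary combinatorial edge connectivity, whence any edge set whose removal disconnects $\Graph$ has at least $\eta$ members.

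For part (i) I would argue by contradiction. If $t\in(\min_{x\in\Graph}\psi(x),\max_{x\in\Graph}\psi(x))$ then both $\{\psi>t\}$ and $\{\psi<t\}$ are nonempty, so $\Graph\setminus\lvlsf(t)$ has components of both signs and is disconnected. Suppose $\nu(t)=1$, say $\lvlsf(t)=\{x_0\}$ with $x_0$ an interior point of an edge $\me$. Deleting one interior point of $\me$ cannot disconnect $\Graph$ unless $\me$ is a bridge: if $\me$ is a loop the punctured loop stays attached at its vertex, and if $\me$ is a non-loop that is not a bridge then $\Graph\setminus\me$ is already connected and the two half-open stubs of $\me$ reattach to it. But a bridge would force the edge connectivity to equal $1$, contradicting $\eta\geq 2$. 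Hence $\nu(t)\neq 1$, and since the intermediate value theorem gives $\nu(t)\geq 1$, we conclude $\nu(t)\geq 2$.

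For part (ii) I would make the link to edge cuts explicit. Fix $t\in(\min\excset,\max\excset)\setminus\excset$ and partition $\mV=\mV_{>t}\sqcup\mV_{<t}$ with $\mV_{>t}:=\{\mv:\psi(\mv)>t\}$ and $\mV_{<t}:=\{\mv:\psi(\mv)<t\}$; both classes are nonempty exactly because $\min\excset<t<\max\excset$, and the partition is clean because $t\notin\excset$. Consider the edges of $\Graph$ having one endpoint in $\mV_{>t}$ and the other in $\mV_{<t}$. Removing precisely these edges destroys every path between $\mV_{>t}$ and $\mV_{<t}$ and hence disconnects $\Graph$, so by the definition of the discrete edge connectivity this set has at least $\eta$ members. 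On each such edge $\psi$ takes values of opposite sign relative to $t$ at its two endpoints, so by the intermediate value theorem it attains $t$ at some interior point, contributing a point to $\lvlsf(t)$; these points are distinct since distinct edges have disjoint interiors. Therefore $\nu(t)\geq\eta$.

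The step I expect to require the most care is the passage from point cuts on the metric graph to edge cuts on the combinatorial graph, together with the bookkeeping of exceptional configurations: loops, whose endpoints coincide and which thus never appear in the crossing-edge count (any interior crossings they carry only improve the inequality); tangential rather than transversal level points, which are handled uniformly by the bridge argument since it depends only on whether the edge is a bridge and not on how $\psi$ meets the value $t$; and the single-vertex flower or cycle case, in which $\excset$ is a singleton so that (ii) is vacuous and (i) follows from the bridge argument exactly as above. Once these are dispatched, both bounds are immediate.
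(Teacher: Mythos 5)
Your proof is correct and follows essentially the same route as the paper: both parts rest on the observation that the level set $\lvlsf(t)$ separates the region $\{\psi>t\}$ from $\{\psi<t\}$ (respectively the vertices above and below $t$), avoids all vertices since $t\notin\excset$, and therefore must meet at least as many edges as the edge connectivity forces into any disconnecting set. You merely spell out two steps the paper leaves terse — the bridge analysis showing a single interior point cannot disconnect a $2$-edge-connected graph in (i), and the explicit vertex bipartition $\mV_{>t}\sqcup\mV_{<t}$ producing the edge cut in (ii) — without changing the argument.
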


\begin{proof}
  (i) If $t \in (\min_{x \in \Graph} \psi (x), \max_{x \in
    \Graph} \psi (x))$, then the graph $\Graph \setminus
  \lvlsf(t)$ must be disconnected, as it cannot contain any further
  path from the location of the minimum of $\psi$ to the location of the
  maximum. That is, removing $\lvlsf(t)$ cuts the graph into (at least) two
  pieces. Since $t \not \in \excset $, we have $\lvlsf(t) \cap \mV =
  \emptyset$. Our assumption on connectivity implies $\nu (t) = \# \lvlsf(t)
  \geq 2$.

  (ii) Similarly, for $t \in (\min \excset, \max \excset )$, the graph
  $\Graph \setminus \lvlsf(t)$ is disconnected, since there is no
  path left connecting the vertices of $\Graph$ at which $\min
  \excset$ and $\max \excset$ are attained. Since $t \not\in
  \excset$, the set of edges containing $\lvlsf(t)$ must have
  cardinality at least $\eta$, since its removal leads to a
  disconnected (discrete) graph.
\end{proof}

We emphasize that the restriction in (ii) to those values of $\psi$
between its minimal and maximal \emph{vertex} values is essential, as
is easy to see in the following example: if $\psi$ has a unique
maximum achieved in the interior of an edge, then for $t$ close to
this value we will have $\nu (t) = 2$ regardless of the discrete edge
connectivity.

\begin{proof}[Proof of Theorem~\ref{thm:connectn}]
  The proof is based on a sharper version of the symmetrization
  procedure of Friedlander \cite[Lemma~3]{Fri_aif05}, which was
  introduced on Eulerian graphs in the work of Kurasov and Naboko
  \cite{KurNab_jst14} and on graphs where $\eta=2$ in the work of Band and
  Levy \cite{BanLev_prep16}.  Here we sharpen this approach further
  for higher values of $\eta$.

We start by fixing an arbitrary eigenvalue $\mu>0$ and any eigenfunction $\psi$ associated with it; this also fixes the corresponding sets $\mathcal{G}_+$, $\mathcal{G}_-$, as well as the function $\nu(t)$ and so forth. Obviously, it suffices to prove the theorem for this $\mu>0$.

We first observe that we may assume without loss of generality that
\begin{equation}
\label{eq:max-vertex}
	\max \excset = \esssup \{ t \in \R : \nu (t) \geq \eta \},
\end{equation}
and that this maximum is attained at a single vertex, call it $\mv_{\max}$. Indeed, Lemma~\ref{lemma:sizeofnu}(ii) yields ``$\leq$'', and if we have strict inequality, then we may insert artificial vertices at all points $x \in \mathcal{G}$ where $\psi(x)$ is equal to this supremum and form a new graph, call it $\Graph'$, by identifying them. Then $\Graph'$ still has edge connectivity $\geq \eta$ by construction. Moreover, $\psi$ may be canonically identified with a function in $W^{1,p}(\Graph')$; moreover, since $W^{1,p}(\Graph')$ may be canonically identified with a subset of $W^{1,p}(\Graph)$, it follows that \eqref{eq:p-lapl-weak-form} continues to hold on $\Graph'$. Hence $(\mu,\psi)$ is, up to this identification, also an eigenpair of $\Graph'$ and we may consider it instead. To keep notation simple, we will write $\Graph$ in place of $\Graph'$.

If $\eta=2$, then $\psi$ attains its maximum over $\mathcal{G}$ at $\mv_{\max}$; if $\eta \geq 3$, this is not guaranteed.

We may also assume without loss of generality that $|\mathcal{G}_+| \leq L/2$ (where $|\,\cdot\,|$ denotes Lebesgue measure, i.e., length). We will denote by $W^{1,p}_0 (\mathcal{G}_+)$ those $W^{1,p}(\mathcal{G}_+)$-functions which are zero on the boundary of $\mathcal{G}_+$ in $\mathcal{G}$, which by standard properties of Sobolev functions may be characterized by
\begin{displaymath}
W^{1,p}_0 (\mathcal{G}_+) = \{u \in W^{1,p}(\mathcal{G}_+): \text{ there exists } \tilde u \in W^{1,p}(\mathcal{G})
	\text{ with } \tilde u \equiv 0 \text{ on } \mathcal{G}\setminus \mathcal{G}_+,\,u=\tilde u|_{\mathcal{G}_+} \}.
\end{displaymath}
In particular, $\psi \equiv \psi|_{\mathcal{G}_+} \in W^{1,p}_0 (\mathcal{G}_+)$. Moreover,
\begin{displaymath}
	\mu = \frac{\int_{\mathcal{G}_+} |\psi'|^p\,dx}{\int_{\mathcal{G}_+}|\psi|^p\,dx}\ ,
\end{displaymath}
as can be seen by taking $\psi_+ :=\psi\cdot \chi_{\mathcal{G}_+} \in W^{1,p}(\mathcal{G})$ as a test function in \eqref{eq:p-lapl-weak-form}.

We now associate with $\mathcal{G}_+$ a ``symmetrized'' half pumpkin with loops attached to one end -- a \emph{stower}, in the terminology of the recent paper~\cite{BanLev_prep16}. More precisely, we start by considering the set which we shall wish to symmetrize,
\begin{displaymath}
	\mathcal{G}_\excset:= \{ x \in \mathcal{G}_+ : \psi (x) \leq \max \excset \}.
\end{displaymath}
By definition of, and our assumptions on, $\excset$, the open set
$\mathcal{G}_+ \setminus \mathcal{G}_\excset$ has no vertices and thus
consists of some number $n \geq 0$ of individual edges, each having both
endpoints at $\mv_{\max}$, say $e_1,\ldots,e_n$, which represent the
set where $\psi > \max \excset$. By \eqref{eq:max-vertex}, $2n < \eta$; in particular, $n=0$ if $\eta=2$.

We now form a new graph $\mathcal{G}_+^\ast$ by
attaching $\eta$ equal edges $\tilde e_1,\ldots, \tilde e_\eta$ of
length $|\mathcal{G}_\excset|/\eta$ to each other at a common vertex
$\mv_0$ (i.e.~to form a star; for each $i$, we denote the other vertex of edge $\tilde e_i$ by $\tilde
\mv_i$). We identify each edge $\tilde e_i$ with the interval $[0,|\mathcal{G}_\excset|/\eta]$, 
whose right hand endpoint corresponds to the central vertex $\mv_0$. 
To $\mv_0$ we also attach $n$ loops $\hat
e_1,\ldots,\hat e_n$ emanating from $\mv_0$, having lengths
$|e_1|,\ldots,|e_n|$, respectively, see Fig.~\ref{fig:stower}(a).

\begin{figure}
  \centering
  \includegraphics{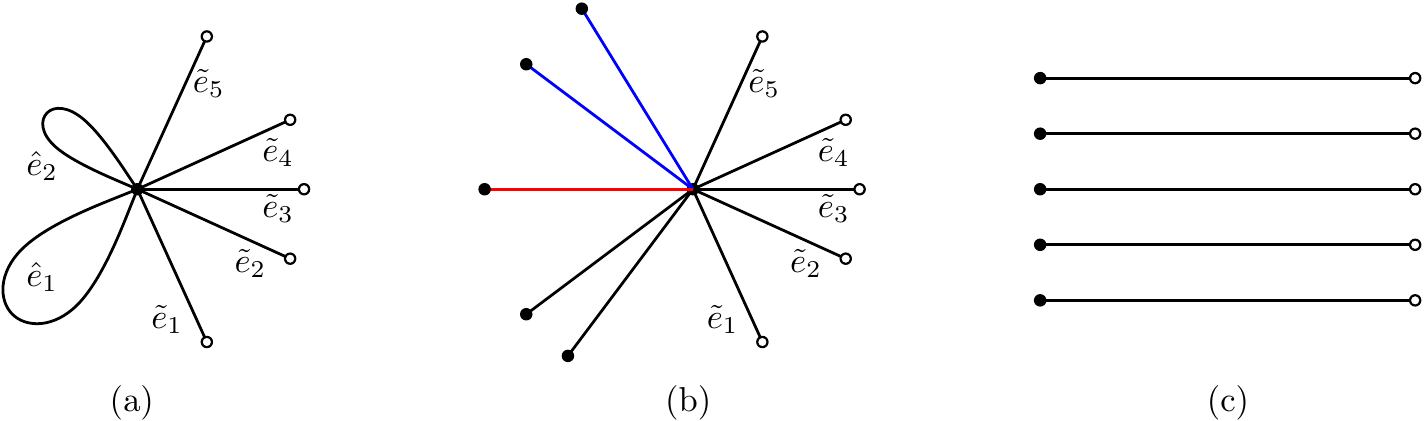}
  \caption{The symmetrized graph $\mathcal{G}_+^\ast$ and its modifications. Black and white dots represent vertices with Neumann and Dirichlet boundary conditions, respectively. (b) shows the graph $\widehat{\mathcal{G}}_+$, which has been formed from (a) by lengthening $\hat e_2$ (edges in blue), splitting the loops apart, and adding an edge (red) so that there are as many edges with a Neumann endpoint as Dirichlet ones.}
  \label{fig:stower}
\end{figure}

We now construct, for each $\phi \in W^{1,p}_0 (\mathcal{G}_+)$, a ``symmetrized'' function $\phi^\ast$ on $\mathcal{G}_+^\ast$ as follows: on each of the edges $\tilde e_i \simeq [0,|\mathcal{G}_\excset|/\eta]$ we set $\phi^\ast (0)=0$ and
\begin{equation}\label{eq:eta-symm}
	|\{x \in (0,|\mathcal{G}_\excset|/\eta): \phi^\ast (x) < t \}| = \frac{1}{\eta}\,|\{x \in \mathcal{G}_\excset : \phi (x) < t \}|
\end{equation}
for all $t \in \R$. We then set $\phi^\ast (\mv_0):=\phi(\mv_{\max})$ and extend $\phi^\ast$ to the loops $\hat e_i$ in the obvious way, by mapping the values of $\phi$ on each loop $e_i$ onto the corresponding $\hat e_i$, $i=1,\ldots,n$. By standard results on symmetrization, the resulting function $\phi^\ast$ is continuous across $\mathcal{G}_+^\ast$ and in $W^{1,p}$ on each edge, so we conclude $\phi^\ast \in W^{1,p}(\mathcal{G}_+^\ast)$.

Moreover, by construction and Cavalieri's principle,
\begin{displaymath}
	\|\phi^\ast\|_{L^p (\mathcal{G}_+^\ast)} = \|\phi\|_{L^p (\mathcal{G}_+)}\ ,
\end{displaymath}
and 
\begin{displaymath}
	\sum_{x \in \mathcal{G}(t)} \frac{1}{|\psi'(x)|} = \frac{\eta}{|(\psi^\ast)'(y_t)|}
\end{displaymath}
for almost all $t \in (0,\max \excset)$, where $y_t$ is any point in $\bigcup_{i=1}^\eta \tilde e_i$ such that $\psi^\ast (y_t)=t$.

Obviously,
\begin{displaymath}
	\|(\phi^\ast)'\|_{L^p\left(\bigcup_{i=1}^n \hat e_i \right)} = \|\phi'\|_{L^p \left( \bigcup_{i=1}^n e_i \right)}.
\end{displaymath}
We will now show for the eigenfunction $\psi$ and its symmetrization $\psi^\ast$ that
\begin{equation}\label{eq:p-symm-part}
	\|(\psi^\ast)'\|_{L^p\left(\bigcup_{i=1}^\eta \tilde e_i \right)} \leq \|\psi'\|_{L^p (\mathcal{G}_\excset)}.
\end{equation}
(In fact, \eqref{eq:p-symm-part} holds with the same argument for any $\phi \in W^{1,p}_0(\mathcal{G}_+)$, but this keeps the notation simpler and certain technical issues involving regularity do not arise.\footnote{It is known that any eigenfunction of the $p$-Laplacian on an interval is at least $C^{1}$, see~\cite[Chapters 2--3]{LanEdm11}. It follows immediately that the same is true for any graph eigenfunction on each edge.}) To see this, we use ideas similar to \cite[Lemma~3]{Fri_aif05} but make use of Lemma~\ref{lemma:sizeofnu}. By the coarea formula,
\begin{displaymath}
	\int_{\mathcal{G}_\excset} |\psi'|^p\,dx = \int_0^{\max \excset} \sum_{x \in \mathcal{G}(t)} |\psi'(x)|^{p-1}\,dt.
\end{displaymath}
By H\"older's inequality and Lemma~\ref{lemma:sizeofnu}(ii), for almost all $t \in (0,\max \excset)$, we have
\begin{displaymath}
	\nu(t)=\sum_{x \in \mathcal{G}(t)}1 = \sum_{x \in \mathcal{G}(t)} |\psi'(x)|^\frac{p-1}{p}\cdot
	\left(\frac{1}{|\psi'(x)|}\right)^\frac{p-1}{p} \leq \left(\sum_{x \in \mathcal{G}(t)}|\psi'(x)|^{p-1}\right)^\frac{1}{p}
	\left(\sum_{x \in \mathcal{G}(t)}\frac{1}{|\psi'(x)|}\right)^\frac{p-1}{p},
\end{displaymath}
noting that $\psi' \neq 0$ almost everywhere on $\mathcal{G}_+$. Rearranging,
\begin{displaymath}
	\sum_{x \in \mathcal{G}(t)}|\psi'(x)|^{p-1} \geq \nu(t)^p \left(\sum_{x\in\mathcal{G}(t)}\frac{1}{|\psi'(x)|}\right)^{-(p-1)}
	\geq \eta^p \left(\sum_{x \in \mathcal{G}(t)} \frac{1}{|\psi'(x)|}\right)^{-(p-1)}.
\end{displaymath}
Hence
\begin{displaymath}
\begin{split}
	\int_{\mathcal{G}_\excset} |\psi'|^p\,dx &= \int_0^{\max \excset} \sum_{x \in \mathcal{G}(t)} |\psi'(x)|^{p-1}\,dt
	\geq \eta^p \int_0^{\max \excset} \left(\sum_{x \in \mathcal{G}(t)} \frac{1}{|\psi'(x)|}\right)^{-(p-1)}\,dt\\
	&= \eta^p \int_0^{\max \excset} \left(\frac{\eta}{|(\psi^\ast)'(y_t)|}\right)^{-(p-1)}\,dt
	= \eta \int_0^{\max \excset} |(\psi^\ast)'(y_t)|^{p-1}\,dt.
\end{split}
\end{displaymath}
Another application of the coarea formula yields
\begin{displaymath}
	\eta \int_0^{\max \excset} |(\psi^\ast)'(y_t)|^{p-1}\,dt = \int_{\bigcup_{i=1}^\eta \tilde e_i}|(\psi^\ast)'(x)|^p\,dx,
\end{displaymath}
recalling that $\psi^\ast$ is equal on each edge $\tilde e_i$. This completes the proof of \eqref{eq:p-symm-part}. We have now shown that
\begin{displaymath}
	\mu = \frac{\int_{\mathcal{G}_+} |\psi'|^p\,dx}{\int_{\mathcal{G}_+} |\psi|^p\,dx}
	\geq \frac{\int_{\mathcal{G}_+^\ast} |(\psi^\ast)'|^p\,dx}{\int_{\mathcal{G}_+^\ast}|\psi^\ast|^p\,dx}
        \geq \eigI_1^{(p)} (\mathcal{G}_+^\ast),
\end{displaymath}
where $\eigI_1^{(p)} (\mathcal{G}_+^\ast)$ is the first $p$-Laplacian eigenvalue of $\mathcal{G}_+^\ast$ with the natural condition at the central vertex $\mv_0$ and Dirichlet conditions at the other vertices $\tilde \mv_1,\ldots,\tilde \mv_\eta$ (cf.~\eqref{eq:dirichlet-rayleigh-quotient-p}).

If $\eta = 2$, then $\mv_0$ has degree two and $\eigI_1^{(p)} (\mathcal{G}_+^\ast)$ is the first eigenvalue of the Dirichlet $p$-Laplacian on an interval of length $2\cdot |\mathcal{G}_\excset|/\eta = |\mathcal{G}_\excset|=|\mathcal{G}_+| \leq L/2$, which is obviously in turn no smaller than the smallest nontrivial $p$-Laplacian eigenvalue of a loop of length $L$, or equivalently, of an interval of length
\begin{displaymath}
	\frac{L}{2} = \frac{L+\ell_{\max}(\eta-2)}{\eta}, \qquad \eta=2,
\end{displaymath}
with Neumann conditions at its endpoints. Since the lowest nonzero eigenvalue of the $p$-Laplacian with Neumann boundary conditions on an interval of length $\ell$ is in fact
\begin{displaymath}
(p-1) \left(\frac{\pi_p}{\ell}\right)^p
\end{displaymath}
(see, e.g., \cite[Thm.~3.4]{LanEdm11}), this proves \eqref{eq:connectp2}.

If $\eta \geq 3$, we need to account for the possibility of loops $\hat e_1,\ldots,\hat e_n$, $n \geq 0$, at $\mv_0$. To this end, we first suppose that $\hat e_1$ is the longest loop at $\mv_0$ and lengthen the others if necessary to make them all exactly as long.  We next ``split apart'' each loop in its midpoint, i.e., we create a new graph with no loops but $2n$ pendant edges at $\mv_0$ of equal length $|\hat e_1|/2$, all having the Neumann condition at their free endpoint. Recalling that $2n < \eta$, we next attach $\eta - 2n$ additional pendant edges of length $|\hat e_1|/2$ to $\mv_0$, again with Neumann conditions at the free ends.
Easy variational arguments similar to the ones in \cite[Thm 3.1.10]{BerKuc_graphs} or \cite[Lemma~2.3]{K2M2_ahp16} show that every step of this process can only decrease $\eigI_1^{(p)} (\mathcal{G}_+^\ast)$.

We claim that the resulting graph, call it $\widehat{\mathcal{G}}_+$, see
Fig.~\ref{fig:stower}(b), has first eigenvalue $\eigI_1^{(p)}(\widehat{\mathcal{G}}_+)$ no smaller than the smallest nontrivial Neumann $p$-Laplace eigenvalue of an interval of length
\begin{displaymath}
	\frac{L+(\eta-2)|\hat e_1|}{\eta}.
\end{displaymath}
Since there must have been an edge in the original graph $\mathcal{G}$ at least as long as $\hat e_1$, we have $|\hat e_1| \leq \ell_{\max}$. Since $\eigO_1^{(p)}$ on an interval is a decreasing function of the interval's length and since we have shown $\eigO_1^{(p)}(\mathcal{G}) \geq \eigI_1^{(p)}(\mathcal{G}_+)$, this claim will complete the proof of the theorem.

To prove the claim, we observe that $\widehat{\mathcal{G}}_+$ consists of $\eta$ identical copies of an interval of length $|\hat e_1|/2 + \mathcal{G}_\excset / \eta$, where the vertex conditions corresponding to $\eigI_1^{(p)}(\mathcal{G}_+)$ consist of a Neumann condition at one end and a Dirichlet one at the other of each interval; all these copies are joined together at the common vertex $\mv_0$ at a distance $|\hat e_1|/2$ from the Neumann end, as in Fig.~\ref{fig:stower}(b). The obvious bijection between (nonnegative) eigenfunctions on the two graphs shows that the first eigenvalue of $\widehat{G}_+$ is equal to the
first eigenvalue of any of these intervals, which each have length
\begin{displaymath}
  \frac{|\hat e_1|}{2} + \frac{|\mathcal{G}_\excset|}{\eta} 
  \leq \frac{|\hat e_1|}{2} + \left(\frac{L}{2}-|\hat e_1|\right)\frac{1}{\eta}
  = \frac{L+(\eta-2)|\hat e_1|}{2\eta},
\end{displaymath}
since $|\mathcal{G}_\excset| \leq |\mathcal{G}_+| - |\hat e_1|$ and $|\mathcal{G}_+| \leq L/2$ by assumption. But the first eigenvalue of the $p$-Laplacian on an interval of length $(L+(\eta-2)\ell_{\max})/(2\eta)$ with a Dirichlet condition at one end and a Neumann one at the other is equal to the smallest nontrivial Neumann eigenvalue of an interval of twice the length, $(L+(\eta-2)\ell_{\max})/\eta$. This proves the claim.
\end{proof}

\begin{remark}[The case of equality]
\label{rem:equality}
(a) Equality in \eqref{eq:connectp2} holds if and only if $\mathcal{G}$ is a finite chain of cycles -- more precisely, in the terminology of Section~\ref{sec:comb-graph}, a $2$-pumpkin chain, where the two edges in each pumpkin are as long as each other, and where each of the two terminal vertices may have a loop attached to it. In \cite{BanLev_prep16}, these are called \emph{symmetric necklace graphs} and identified as the corresponding minimizers when $p=2$; they are exactly those graphs having the same first nontrivial eigenvalue as a loop, among all graphs of edge connectivity at least two with a given total length. (In particular, one may show that $\eigO_1^{(p)}$ is in fact an eigenvalue on such graphs.)

To see this, first observe that equality implies $|\mathcal{G}_+|=L/2$. 
Then also $|\mathcal{G}_-|=L/2$, because if the eigenfunction were 
to vanish on a set of positive length, we could make $\mathcal{G}$ 
strictly smaller without changing the eigenvalue.
Moreover, since
\begin{displaymath}
	\frac{\int_{\mathcal{G}_+} |\psi'|^p\,dx}{\int_{\mathcal{G}_+} |\psi|^p\,dx}
	= \frac{\int_{\mathcal{G}_+^\ast} |(\psi^\ast)'|^p\,dx}{\int_{\mathcal{G}_+^\ast}|\psi^\ast|^p\,dx},
\end{displaymath}
we must have $\nu(t)=\eta=2$ for almost all $t\in [0,\max \psi]$.
Thus up to a finite exceptional set, $\mathcal{G}$ must
consist of two paths representing the preimages of the set $(\min
\psi,\max\psi)$. This is only possible if $\mathcal{G}$ is a cycle or
a pumpkin chain of degree two (with loops at each end under the
convention that no vertex may have degree two).
 That the pumpkin chain
must be regular can be shown by noting that to have equality in
H\"older's inequality, we need $|\psi'(x)| = |\psi'(y)|$ for any $x$ and $y$
in the same $\lvlsf(t)$.  Therefore, the function $\psi$
is identical along the two paths, and, in particular, the paths have
the same length.

(b) Equality in \eqref{eq:connectn} never holds. This follows since we always have $\eigI_1^{(p)} (\mathcal{G}_+^\ast) > \eigI_1^{(p)}(\widehat{\mathcal{G}}_+)$ if $\eta \geq 3$: equality would require $\mathcal{G}_+^\ast$ to have exactly $\eta/2$ loops of length $|\hat e_1|$ at $\mv_0$, but we already observed that $n < \eta/2$. However, equality for fixed $\eta \geq 3$ and $L>0$ is attained in the asymptotic limit for some classes of graphs as $\ell_{\max} \to 0$ (and hence the number of edges tends to infinity).  To show this, it is enough to construct a sequence of graphs with constant $\eigO_1^{(p)}(\mathcal{G}_n)$ but $\ell_{\max}(\mathcal{G}_n) \to 0$.  This is achieved, for example, by $\eta$-regular pumpkin chains with $n$ pumpkins and edges of equal length (which works out to be $L/n\eta$).

(c) If we happen to know that $\eigO_1^{(p)}(\mathcal{G})$ has an eigenfunction which attains its maximum and minimum on vertices of $\mathcal{G}$, then the proof of Theorem~\ref{thm:connectn} in fact yields
\begin{displaymath}
	\eigO_1^{(p)}(\mathcal{G}) \geq \left(\frac{\eta \pi_p}{L}\right)^p\ ;
\end{displaymath}
in particular, $\eigO_1(\mathcal{G}) \geq \pi^2 \eta^2/L^2$. This follows because in this case $\mathcal{G}_+^\ast$ does not have any loops.
\end{remark}

\begin{remark}
As mentioned in the introduction, Fiedler derived both a lower and an upper estimate on the spectral gap of the discrete Laplacian on combinatorial graphs based on the edge connectivity $\eta$. We are not aware of upper estimates on the spectral gap of the differential Laplacian on quantum graphs based on $\eta$, and indeed this does not appear to be a natural quantity for upper bounds. Indeed, a small perturbation of a graph with large $\eta$ can greatly decrease $\eta$ without significantly affecting the eigenvalues. Consider, for example, a sequence of ``pumpkin dumbbells'' $\Graph_n$ -- each consisting of two identical pumpkins, both having $k_n$ edges, joined by a single, short edge (a ``handle''). Then we can let $k_n \to \infty$ while keeping the total length fixed  $L>0$, by shrinking the edges correspondingly. If the handle also shrinks, then we can expect $\mu_1^{(p)}(\Graph_n) \to \infty$ (for $p=2$ this follows from \cite[Thm.~7.2]{K2M2_ahp16}, since the diameter of $\Graph_n$ tends to zero). Thus it is possible that $\eta$ and $L$ remain fixed, while $\mu_1^{(p)} \to \infty$. One could ask whether we might obtain a bound if we impose a minimum on $\ell_{\min}$, but we leave open the question of whether it is then possible to do better than trivial bounds in terms of $\ell_{\min}$ alone such as
\begin{displaymath}
	(p-1)\left(\frac{2\pi_p}{\ell_{\min}}\right)^p  ,
\end{displaymath}
corresponding to the first eigenvalue of the Dirichlet $p$-Laplacian on an interval of length $\ell_{\min}/2$.
\end{remark}

%%%%%%%%%%%%%%%%%%%%%%%%%%%%%%%%%%%%%%%%%%%%%%%%%%%%%%%%%%%%%%
\subsection{Implications for the normalized Laplacian}
\label{rem:otherthanfiedler}

Let us mention a tangential but notable consequence of Theorem~\ref{thm:connectn}: von Below \cite{Bel_laa85} observed a transference principle for the spectra of the differential Laplacian on an \emph{equilateral} metric graph $\Graph$ and of the normalized Laplacian $\mathcal L_{\rm norm}$ on the underlying combinatorial graph $\mG$, cf.~\cite{Chung_spectralgraph}.  If all edges of $\Graph$ have the same length (without loss of generality, length 1), then the spectral gap $\eigO_1(\Graph)=\eigO_1^{(2)}(\Graph)$ of the differential Laplacian and the lowest nonzero eigenvalue
\begin{displaymath}
\alpha_1(\mG):= \inf \left\{ \frac{\sum\limits_{\me \in \mE } |f(\me_{init})-f(\me_{term})|^2 }{ \sum\limits_{\mv \in \mV}
	|f(\mv)|^2\deg (\mv)} :\, 0 \neq f \in \R^\mV,\, \sum_{\mv \in \mV} f(\mv) \deg(\mv)= 0 \right\}\in [0,2]
\end{displaymath}
 of the normalized Laplacian $\mathcal L_{\rm norm}$ satisfy the equality
\begin{equation}\label{eq:below}
\eigO_1(\Graph) = \left(\arccos (1-\alpha_1(\mG))\right)^2,
\end{equation}
provided $\alpha_1(\mG)\in [0,2)$ (the restriction $\alpha_1(\mG)\neq 2$ may actually be dropped unless $\mG$ is a non-bipartite unicyclic graph, in which case $\pi^2$ will be an eigenvalue of the differential Laplacian even if $2$ is not an eigenvalue of $\mathcal L_{\rm norm}$). By definition, $\alpha_1(\mG)$ is related to the lowest nonzero eigenvalue $\gamma_1(\mG)$ of $\mathcal L$ by
\begin{equation}\label{eq:normunnormtriv}
\alpha_1(\mG)\deg_{\rm max}(\mG) \ge \gamma_1(\mG)\ge \alpha_1(\mG)\deg_{\rm min}(\mG)\ ,
\end{equation}
where $\deg_{\min}(\mG)$ and $\deg_{\max}(\mG)$ are the minimal and maximal degree of the graph, respectively.
Thus, combining~\eqref{eq:normunnormtriv} and~\eqref{eq:discconnectn} we obtain the lower estimate
\begin{equation}\label{eq:trivial}
\alpha_1(\mG)\ge \frac{2\eta}{\deg_{\max}(\mG)}\left[ 1-\cos\left( \frac{\pi}{V}\right)\right]\ ;
\end{equation}
as an essentially trivial consequence of~\eqref{eq:connectn}; 
but apart from this, no further estimate on $\alpha_1$ based on $\eta$ seems to have appeared in the literature so far and in fact the spectral properties of these two matrices are known to be rather different -- the normalized Laplacian mirroring the geometry of a combinatorial graph in a more reliable way. 

Combining Corollary~\ref{cor:connectncor} with \eqref{eq:below} we obtain
\begin{corollary}
\label{cor:ec-cg}
Let $\mG$ be a combinatorial graph on $E$ edges with edge connectivity $\eta$. Then the lowest nonzero  eigenvalue $\alpha_1(\mG)$ of the normalized Laplacian $\mathcal L_{\rm norm}$ satisfies
\begin{equation}\label{eq:normconnectn}
\alpha_1(\mG)\ge 1-\cos\left(\frac{\pi\eta}{E+(\eta-2)_+}\right).
\end{equation}
\end{corollary}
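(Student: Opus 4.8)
The plan is to read the claimed estimate off the corresponding quantum-graph bound via von Below's transference identity~\eqref{eq:below}. First I would equip $\mG$ with the structure of an \emph{equilateral} metric graph $\Graph$, each of its $E$ edges becoming an interval of length one, so that $L = E$ and $\ell_{\max} = 1$; since introducing or removing vertices of degree two changes neither $\eigO_1$ nor $L$ (as recalled after Assumption~\ref{assumption:finite}), I may always pass to the reduction of $\Graph$ and assume Assumption~\ref{assumption:finite} holds. The decisive observation is that the $\ell_{\max}$-correction in~\eqref{eq:connect2n} is active only for $\eta \geq 3$, and in that regime $\mG$ automatically has minimum degree at least three---edge connectivity never exceeds the minimum degree---so $\mG$ has no vertices of degree two, $\Graph$ is genuinely equilateral with $\ell_{\max} = 1$, and its discrete edge connectivity (Definition~\ref{def:dec}) coincides with the combinatorial $\eta$ of $\mG$. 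Corollary~\ref{cor:connectncor} then yields
\begin{equation*}
  \eigO_1(\Graph) \geq \frac{\eta^2\pi^2}{\bigl(E + (\eta-2)_+\bigr)^2}.
\end{equation*}
For $\eta = 2$ the correction term vanishes and this is just~\eqref{eq:connectp2} with $p=2$ (applicable because the discrete edge connectivity of $\Graph$ is at least the combinatorial one), while for $\eta = 1$ it is the isoperimetric bound $\eigO_1(\Graph) \geq \pi^2/L^2 = \pi^2/E^2$. Hence the displayed inequality holds for every $\eta \geq 1$.

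Next I would feed this into~\eqref{eq:below}. Since $\Graph$ is equilateral and, for the moment, $\alpha_1(\mG) \in [0,2)$, we have $\eigO_1(\Graph) = \bigl(\arccos(1 - \alpha_1(\mG))\bigr)^2 \leq \pi^2$, the upper bound coming from $\arccos \in [0,\pi]$. Combined with the previous display this already forces $\eta\pi/(E+(\eta-2)_+) \leq \pi$, so both quantities to be compared lie in $[0,\pi]$. Taking positive square roots gives
\begin{equation*}
  \arccos\bigl(1 - \alpha_1(\mG)\bigr) \geq \frac{\eta\pi}{E + (\eta-2)_+},
\end{equation*}
and applying the decreasing function $\cos$ to both sides---legitimate precisely because both arguments lie in $[0,\pi]$---gives $1 - \alpha_1(\mG) \leq \cos\bigl(\eta\pi/(E+(\eta-2)_+)\bigr)$, which is exactly~\eqref{eq:normconnectn}.

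The one hypothesis of~\eqref{eq:below} that must still be disposed of is $\alpha_1(\mG) < 2$. If instead $\alpha_1(\mG) = 2$, then~\eqref{eq:normconnectn} holds trivially, since its right-hand side never exceeds $1 - (-1) = 2$; in particular the non-bipartite unicyclic exception flagged after~\eqref{eq:below}, where the identity may fail at the endpoint $\alpha_1 = 2$, cannot damage a \emph{lower} bound on $\alpha_1$. I expect the only genuine (and modest) obstacle to be this interplay between the three notions of connectivity: one must check that the $\eta$ in the statement---the combinatorial edge connectivity of $\mG$---is the very number fed into Corollary~\ref{cor:connectncor}, and that $\ell_{\max}$ equals $1$ on the nose whenever the correction term is present. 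Both are secured by the elementary remark that $\eta \geq 3$ excludes vertices of degree $\leq 2$, after which the proof is purely a matter of inverting~\eqref{eq:below}.
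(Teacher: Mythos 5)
Your argument is exactly the paper's: the corollary is stated there as an immediate consequence of combining Corollary~\ref{cor:connectncor} (applied to the equilateral metric graph with unit edge lengths, so $L=E$ and $\ell_{\max}=1$) with von Below's identity~\eqref{eq:below}, and you have simply filled in the details the paper leaves implicit. Your additional checks --- that $\eta\ge 3$ forces minimum degree at least three so the discrete and combinatorial edge connectivities agree with $\ell_{\max}=1$, that both arguments of $\cos$ lie in $[0,\pi]$, and that the endpoint case $\alpha_1(\mG)=2$ is harmless for a lower bound --- are all correct.
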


To the best of our knowledge this is also the first instance of estimates of combinatorial spectral graph theory obtained by quantum graph methods, instead of the other way around.
It is not known whether von Below's formula~\eqref{eq:below} also extends to the case of $p$-Laplacians for $p\ne 2$ and we thus do not know whether~\eqref{eq:normconnectn} can also be extended to the case of normalized $p$-Laplacians on combinatorial graphs.

\begin{example}
\label{exa:wheels}
Using $\cos  x\simeq 1-x^2/2$ one obtains the asymptotic behavior
\begin{displaymath}
\frac{2\eta}{\deg_{\max}(\mG)}\frac{\pi^2}{2V^2}\qquad\hbox{and}\qquad \frac{\pi^2\eta^2}{2(E+(\eta-2)_+)^2}
\end{displaymath}
 as $V,E\to \infty$, respectively, for the right hand sides of~\eqref{eq:normunnormtriv} and~\eqref{eq:normconnectn}. Let us consider the case of a wheel graph $\mW_{n+1}$ on $n+1$ vertices, which has $2n$ edges, whose maximal degree is $n$ and whose discrete edge connectivity is 3. Then the lower estimates become
\begin{displaymath}
\frac{3\pi^2}{n^3}\qquad\hbox{and}\qquad \frac{9\pi^2}{2(2n+1)^2}
\end{displaymath}
which shows that our estimate is substantially better. The correct spectral gap of the wheel graph was computed in~\cite[\S~2.3.1]{But08} to be
\begin{displaymath}
\alpha_1(\mW_{n+1})=1-\frac{2}{3}\cos \frac{2\pi}{n},
\end{displaymath}
to be compared with our estimate
\begin{displaymath}
\alpha_1(\mW_{n+1})\ge 1-\cos	\frac{3\pi}{2n+1}\ .
\end{displaymath}
\end{example}

%%%%%%%%%%%%%%%%%%%%%%%%%%%%%%%%%%%%%%%%%%%%%%%%%%%%%%%%
\section{Eigenvalue estimates for graphs with Dirichlet vertices}
\label{sec:qg_betti}

Our main tools in this section will be a symmetrization argument similar 
to the one in the proof of Theorem~\ref{thm:connectn} and
the following Lemma established by Band, Berkolaiko and Weyand.

\begin{lemma}[Lemma 4.2 of \cite{BanBerWey_jmp15}]
  \label{lem:interlacing_maxD}
  Let $\Graph$ be a metric graph with a vertex $v$ of degree $d$ with natural
  conditions, whose removal separates the graph into $d$ disjoint
  subgraphs.  We denote its edge set by $E_{v}$.  Let $r < d$ be a
  nonnegative integer. For a subset $E_{D}$ of $E_{v}$, with
  $\left|E_{D}\right|=r$, define $\Graph_{E_{D}}$ to be the
  modification of the graph $\Graph$ obtained by imposing the
  Dirichlet condition at $v$ for edges from $E_{D}$ and leaving the
  edges from $E_{v}\setminus E_{D}$ connected at $v$ with the natural
  condition (see Figure~\ref{fig:max_dirichlet} for an example).  Then
  \begin{equation}
    \eigI_{n-1}(\Graph)
    \leq \min_{|E_{D}|=r}\eigI_{n}(\Graph_{E_{D}})
    \leq \eigI_{n}(\Graph)
    \leq \max_{|E_{D}|=r}\eigI_{n}(\Graph_{E_{D}})
    \leq \eigI_{n+1}(\Graph).
    \label{eq:max_Dirichlet}
  \end{equation}
\end{lemma}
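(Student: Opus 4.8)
The plan is to treat every graph appearing in the statement as an operator on the \emph{same} space $L^2(\Graph)$ with the \emph{same} quadratic form $a(u)=\int_{\Graph}|u'|^2\,dx$, so that they differ only through a single linear constraint imposed on the vector of vertex traces $x(u):=\big(u_{e_1}(v),\ldots,u_{e_d}(v)\big)\in\mathbb{C}^d$ at $v$, where $E_v=\{e_1,\ldots,e_d\}$. Writing $\mathbf 1=(1,\ldots,1)$ and letting $w_{E_D}$ be the indicator vector of $E_v\setminus E_D$, the form domain of $\Graph$ is cut out by $x(u)\in\langle\mathbf 1\rangle$ (continuity at $v$), that of $\Graph_{E_D}$ by $x(u)\in\langle w_{E_D}\rangle$, and that of the fully Dirichlet graph $\Graph^D$ (Dirichlet at $v$ on every incident edge) by $x(u)=0$. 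All four quantities are then read off from one min--max principle, and I would use repeatedly the elementary facts that enlarging a form domain cannot increase any $\eigI_n$, and that passing to a subspace of codimension one produces the interlacing $\eigI_n\le\eigI_n^{\mathrm{restr}}\le\eigI_{n+1}$.

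The two outer inequalities then follow immediately, and do not even require the separation hypothesis. Indeed $\mathcal D(\Graph^D)$ is a codimension-one subspace of both $\mathcal D(\Graph)$ and each $\mathcal D(\Graph_{E_D})$, since in each case exactly one trace parameter, namely the common value on the Neumann-coupled edges, is being frozen to zero. Codimension-one interlacing therefore gives $\eigI_{n-1}(\Graph)\le\eigI_{n-1}(\Graph^D)\le\eigI_n(\Graph_{E_D})$ and $\eigI_n(\Graph_{E_D})\le\eigI_n(\Graph^D)\le\eigI_{n+1}(\Graph)$ for every admissible $E_D$; taking the minimum, respectively the maximum, over $E_D$ yields the first and the last inequality of the statement.

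The middle inequalities are the crux, and here the hypothesis that deleting $v$ disconnects $\Graph$ into the $d$ branches $B_1,\ldots,B_d$ (with $e_i\subset B_i$) is essential. Because of it the form splits as a direct sum over the branches, coupled only through the constraint at $v$, so the Dirichlet-to-Neumann matrix at $v$ is \emph{diagonal}, $M(\lambda)=\operatorname{diag}(m_1(\lambda),\ldots,m_d(\lambda))$, where $m_i$ is the Weyl function of $B_i$ seen from its $v$-endpoint; each $m_i$ is strictly increasing between consecutive poles, the poles being the Dirichlet eigenvalues of $B_i$. Away from these poles an eigenvalue of the coupling $\langle v\rangle$ is a zero of $\lambda\mapsto\langle v,M(\lambda)v\rangle=\sum_i v_i^2\,m_i(\lambda)$, the remaining eigenvalues being the decoupled Dirichlet eigenvalues of the branches on which $v_i=0$. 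I would then use the combinatorial identity
\[
  \sum_{|E_D|=r}\langle w_{E_D},M(\lambda)w_{E_D}\rangle
  =\sum_{|E_D|=r}\sum_{i\notin E_D}m_i(\lambda)
  =\binom{d-1}{r}\sum_{i=1}^d m_i(\lambda)
  =\binom{d-1}{r}\,\langle\mathbf 1,M(\lambda)\mathbf 1\rangle,
\]
which says that the secular function of $\Graph$ is a positive average of those of the graphs $\Graph_{E_D}$. Hence at every non-pole $\lambda$ the value $\langle\mathbf 1,M(\lambda)\mathbf 1\rangle$ has the same sign as, and (after the positive normalisation) lies between, $\min_{E_D}\langle w_{E_D},M(\lambda)w_{E_D}\rangle$ and $\max_{E_D}\langle w_{E_D},M(\lambda)w_{E_D}\rangle$. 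Feeding this sign information into the monotonicity of the $m_i$ and the resulting count of eigenvalues below $\lambda$ forces the $n$-th eigenvalue of $\Graph$ to lie between $\min_{E_D}\eigI_n(\Graph_{E_D})$ and $\max_{E_D}\eigI_n(\Graph_{E_D})$, which are exactly the two middle inequalities.

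The main obstacle is the bookkeeping at the exceptional values of $\lambda$, namely the branch Dirichlet eigenvalues: these are poles of some $m_i$, yet are simultaneously genuine eigenvalues of every $\Graph_{E_D}$ that cuts the corresponding branch, where they reappear as decoupled Dirichlet eigenvalues. Matching the zeros of the various secular functions against their \emph{differing} sets of poles, and reconciling the multiplicities of eigenvalues shared between the coupled and the decoupled parts, is where the argument has to be carried out carefully. I would defuse this either by a perturbation argument --- generically perturb the edge lengths so that all branch Dirichlet eigenvalues become simple and mutually distinct, establish the inequalities in that case, and pass to the limit using continuity of the eigenvalues --- or by invoking the standard signature/Weyl-counting formula for the number of eigenvalues below $\lambda$, which packages the pole--zero accounting once and for all.
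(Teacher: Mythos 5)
The paper does not prove this lemma at all: it is imported verbatim as Lemma~4.2 of \cite{BanBerWey_jmp15}, and the only in-paper commentary is the remark that the cited proof runs through Lemma~4.1 of that reference (a decomposition of $\Graph$ at $v$ into $r+1$ disjoint subgraphs, which is why $r<d$ is needed). So there is no internal proof to compare against, and your argument should be judged as a free-standing reconstruction. As such it is sound. The outer inequalities are handled completely and correctly: $\mathcal D(\Graph^D)$ has codimension one in $\mathcal D(\Graph)$ and in each $\mathcal D(\Graph_{E_D})$ (the latter precisely because $r<d$ leaves at least one Neumann-coupled trace to freeze), and chaining the two codimension-one interlacings through $\Graph^D$ gives $\eigI_{n-1}(\Graph)\le\eigI_{n-1}(\Graph^D)\le\eigI_n(\Graph_{E_D})\le\eigI_n(\Graph^D)\le\eigI_{n+1}(\Graph)$, as you say without using the separation hypothesis. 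For the middle inequalities your averaging identity is correct ($e_i\notin E_D$ for exactly $\binom{d-1}{r}$ of the $\binom{d}{r}$ subsets), and the diagonality of $M(\lambda)$ — the one place the separation hypothesis enters — is exactly what makes it usable. Note that only the \emph{sign} comparison is needed, not the ``lies between'' statement: in the generic situation the counting functions satisfy $N_w(\lambda)=\sum_{i=1}^d N_i(\lambda)+\epsilon_w(\lambda)$ with $\epsilon_w(\lambda)\in\{0,1\}$ determined by $\operatorname{sgn}F_w(\lambda)$ through a $w$-independent rule, so a $\lambda$ strictly between $\eigI_n(\Graph)$ and all $\eigI_n(\Graph_{E_D})$ would force all $F_{w_{E_D}}(\lambda)$ to have one sign and $F_{\mathbf 1}(\lambda)$ the other, contradicting the identity. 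This step deserves to be written out, since it is the actual content of the middle inequalities.

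The gap you flag — the bookkeeping at branch Dirichlet eigenvalues, where poles of the $m_i$ and decoupled eigenvalues of the cut branches must be reconciled — is genuine, but your proposed fix is adequate: perturb the edge lengths so that the branch Dirichlet spectra are simple, mutually disjoint and disjoint from the zeros of all the secular functions, prove the (non-strict) inequalities there, and pass to the limit using continuity of all $\eigI_n$ in the edge lengths together with continuity of the finite min and max. Both ingredients are standard and available in the literature the paper already cites (\cite{BerKuc_incol12} for continuous dependence, \cite{BerLiu_jmaa17} for genericity of simple eigenvalues with eigenfunctions nonvanishing at vertices). So the proposal is correct in outline; to be a complete proof it needs the counting formula and the genericity/continuity step made explicit.
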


\begin{figure}
  \centering
  \includegraphics[scale=0.75]{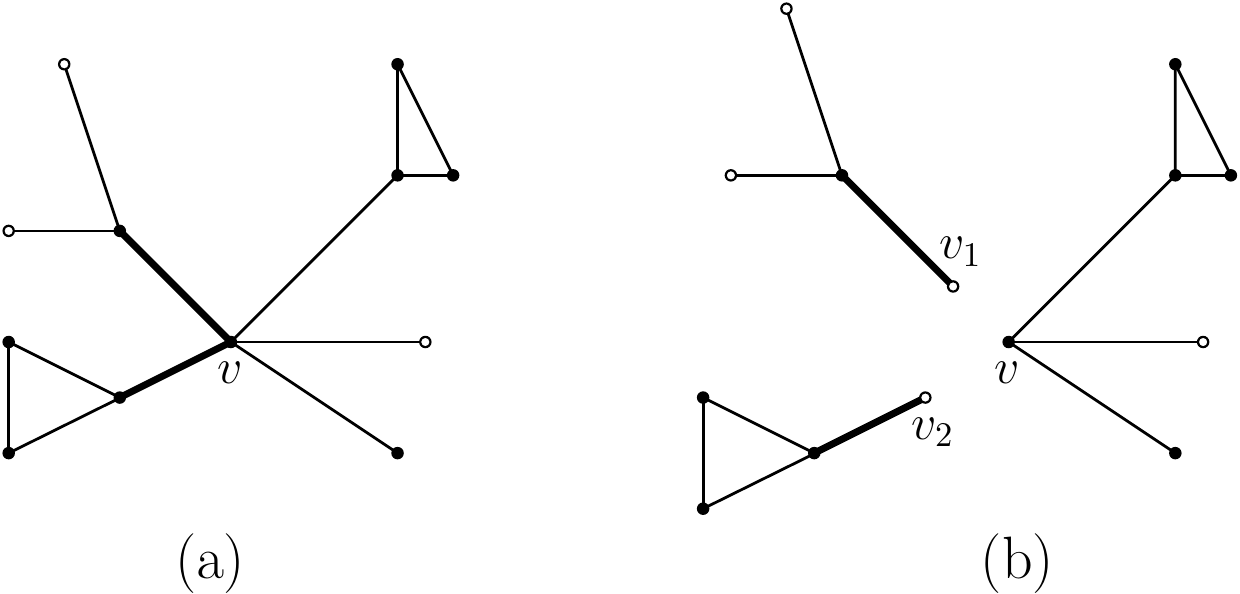}
  \caption{A graph $\Graph$ and one possibility of its modification $\Graph_{E_D}$ as in Lemma~\ref{lem:interlacing_maxD} with $r=2$.  Dirichlet vertices are shown as empty circles.}
  \label{fig:max_dirichlet}
\end{figure}

\begin{remark}
  The conditions at vertices other than $v$ can be of any type.  Also,
  the Lemma applies to the eigenvalues of Schrödinger operators
  (i.e.\ the Laplacian plus a potential).

  In Lemma 4.2 of \cite{BanBerWey_jmp15}, the condition $r \leq d$ was
  used.  In fact, one can not choose $r = d$, as Lemma~4.1 of that
  reference, which is used in the proof, requires decomposition of
  $\Graph$ at $v$ into $r+1$ disjoint subgraphs.
\end{remark}

%%%%%%%%%%%%%%%%%%%%%%
\subsection{Lower bounds}

We will derive an estimate for the first eigenvalue of a quantum graph $\Graph$ 
with some vertices of degree one on which Dirichlet conditions are imposed.  As easy
corollaries we obtain many existing and some new results, including
for graphs without Dirichlet vertices.  Throughout Section~\ref{sec:qg_betti} we
will number the eigenvalues starting with 1, i.e. 
\begin{displaymath}
0\leq \eigIp_1(\Graph) < \eigIp_2(\Graph) \leq \ldots ,
\end{displaymath}
independent of whether the graph has Dirichlet vertices or not.\footnote{Note that 
the ground state eigenvalue $\eigIp_1$ remains simple if $\Graph$ has at least one 
Dirichlet vertex; this is part of the statement in~\cite[Thm.~3.3]{DelRos_amp16}, 
for example.}

\begin{lemma}
  \label{lemma:two_Dirichlet_vertices}
  Let $\Graph$ be a quantum graph with at least two vertices of
  degree one, and such that if we merge all degree one vertices
  together, the resulting graph has edge connectivity two or more.
  Impose Dirichlet conditions on degree one vertices and natural
  conditions elsewhere.  Then the ground state eigenvalue of $\mathcal
  G$ is bounded below by
  \begin{equation}
    \label{eq:friedlander_lemma3-p}
    \eigIp_1 (\Graph) \geq \left(\frac{\pi_p}{L}\right)^p.
  \end{equation}
  The minimizers are $2$-regular pumpkin chains with two edges of
  equal length attached to one of the endpoints and the degenerate case of an interval with two
  Dirichlet endpoints (it is ``degenerate'' since it can be seen as having an
  infinitesimal pumpkin chain at the midpoint of the interval).
\end{lemma}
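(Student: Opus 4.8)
The plan is to adapt the symmetrization in the proof of Theorem~\ref{thm:connectn} to the case $\eta=2$, with the Dirichlet boundary now playing the role that the nodal set $\lvlsf(0)$ played there. Since the Rayleigh quotient \eqref{eq:dirichlet-rayleigh-quotient-p} is unchanged under $u\mapsto|u|$, I first fix a nonnegative ground state $\psi\geq 0$ realizing $\eigIp_1(\Graph)$; by the strong maximum principle for the $p$-Laplacian I may assume $\psi>0$ in the interior, with $\psi=0$ exactly on the Dirichlet (degree one) vertices. Write $M:=\max_\Graph\psi$ and use the notation \eqref{eq:Gpm} for the level sets $\lvlsf(t)$ and for $\nu(t)=\#\lvlsf(t)$, which is finite for $t\in(0,M)$.

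The key point, replacing Lemma~\ref{lemma:sizeofnu}, is to transfer the connectivity hypothesis into a level-set count. Form the merged graph $\tilde\Graph$ by identifying all degree one vertices into a single vertex $\mv_0$; by hypothesis $\tilde\Graph$ has edge connectivity at least two, and $\psi$ descends to a continuous function on $\tilde\Graph$ with $\psi(\mv_0)=0$. It is essential to argue on $\tilde\Graph$ rather than $\Graph$, since $\Graph$ itself may fail to be two-edge-connected (the pendant Dirichlet edges are bridges). For a.e.\ $t\in(0,M)$ one has $t\notin\psi(\mV)$, so $\lvlsf(t)$ avoids all vertices; removing $\lvlsf(t)$ from $\tilde\Graph$ separates $\mv_0$ (where $\psi<t$) from a maximum point (where $\psi>t$) and hence disconnects $\tilde\Graph$. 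As a single interior cut cannot disconnect a two-edge-connected graph, I conclude $\nu(t)\geq 2$ for a.e.\ $t\in(0,M)$. Note that for $t>0$ the set $\lvlsf(t)$ never meets a degree one vertex, so this count is the same whether read on $\Graph$ or on $\tilde\Graph$.

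I then symmetrize $\psi$ on all of $\Graph$ onto a star $\Graph^\ast$ of two equal edges exactly as in \eqref{eq:eta-symm} with $\eta=2$: each edge is identified with an interval on which $\psi^\ast$ increases monotonically from $0$ (free, Dirichlet end) to $M$ (central vertex), and the measure rule forces each edge to have length $|\Graph|/2=L/2$. Because $\eta=2$ there is no region where $\psi>\max\excset$ carrying loops, so the target is simply an interval of length $L$ with a Dirichlet condition at each end, the central degree two vertex with natural conditions being removable. Cavalieri's principle gives $\|\psi^\ast\|_p=\|\psi\|_p$, while the coarea formula together with H\"older's inequality and $\nu(t)\geq 2$ yields the gradient inequality $\|(\psi^\ast)'\|_p\leq\|\psi'\|_p$ exactly as in the derivation of \eqref{eq:p-symm-part}. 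Hence
\begin{displaymath}
  \eigIp_1(\Graph)=\frac{\int_\Graph|\psi'|^p\dx}{\int_\Graph|\psi|^p\dx}
  \geq\frac{\int_{\Graph^\ast}|(\psi^\ast)'|^p\dx}{\int_{\Graph^\ast}|\psi^\ast|^p\dx}
  \geq (p-1)\left(\frac{\pi_p}{L}\right)^p,
\end{displaymath}
the last quantity being the first eigenvalue of the Dirichlet $p$-Laplacian on an interval of length $L$, which is \eqref{eq:friedlander_lemma3-p}.

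The main obstacle is precisely the connectivity bookkeeping of the second paragraph: one must pass to $\tilde\Graph$ to obtain two-edge-connectivity at all, yet read off $\nu(t)$ on $\Graph$, and verify the two pictures agree for $t>0$. For the equality statement I trace the two inequalities above. Equality in H\"older forces $|\psi'|$ to be constant on each $\lvlsf(t)$, and equality in the symmetrization forces $\nu(t)=2$ for a.e.\ $t$, so that, up to the finite exceptional set $\excset^\ast$, $\Graph$ is swept out by exactly two arcs joining its minimum set to its maximum, along which $\psi$ is identical. As in Remark~\ref{rem:equality}(a) this identifies $\tilde\Graph$ as a regular two-pumpkin chain; undoing the merge reattaches two equal pendant Dirichlet edges at one endpoint, while the degenerate case in which the pumpkin chain collapses to a single vertex is precisely the interval with two Dirichlet endpoints.
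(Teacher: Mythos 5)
Your proof is correct and follows essentially the same route as the paper's: a nonnegative ground state is symmetrized via the coarea/H\"older argument using $\nu(t)\ge 2$, which the paper likewise extracts from the two-edge-connectivity of the merged graph (phrased there as two edge-disjoint paths from the maximum point to the Dirichlet vertices), and the equality case is settled exactly as in Remark~\ref{rem:equality}(a). The only cosmetic difference is that you symmetrize onto an interval of length $L$ with two Dirichlet ends rather than onto $[0,L]$ with a Dirichlet--Neumann pair and an explicit factor $2^p$; these yield the same constant, and your inclusion of the factor $(p-1)$ is the one consistent with the normalization of \eqref{eq:dirichlet-rayleigh-quotient-p}.
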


\begin{proof}
  Consider the ground state eigenfunction $ \psi_1 $, which
  by~\cite[Thm.~3.3]{DelRos_amp16} can be chosen nonnegative. Let us
  denote by $ M $ its maximum and by $ x_0 $ any point on $ \Graph $
  where the maximum is attained. There exist two edge-disjoint paths
  connecting $ x_0 $ to the two Dirichlet vertices, since otherwise
  the graph obtained after the deletion of the edges with Dirichlet
  conditions would have discrete edge connectivity less than two. The
  usual symmetrization technique maps the function $ \psi_1 $ to a
  function on the interval $ [0, L]$ satisfying the Dirichlet
  condition at point $ 0. $ The number of preimages $ \nu (t) $ is at
  least two almost everywhere. Hence the Rayleigh quotient is at least
  $2^p $ times the lowest eigenvalue of the Laplacian on $ [0,L ] $
  with Dirichlet and Neumann boundary conditions. The latter
  eigenvalue is $ \left( \frac{\pi_p}{2 L} \right)^p. $ Hence formula
  (\ref{eq:friedlander_lemma3-p}) follows.

  The minimizing graph is not unique.  Just as in
  Remark~\ref{rem:equality}(a), the symmetrization process yields an
  equality if and only if the number of pre-images $ \nu(t) $ is
  precisely equal to two up to a finite exceptional set.  This
  requires that $\Graph$ be a $2$-regular pumpkin chain with two edges
  of equal length (the Dirichlet edges) attached to one of the end
  points. But such graphs are easily seen to have first eigenvalue
  equal to $(\pi_p/L)^p$, i.e., that of an interval of length $L$ with
  Dirichlet boundary conditions. Indeed, the unique ground state on
  the interval is known to be given by $\sin_p$, and the graph
  $ \Graph $ can in this case be obtained from the interval by joining
  together points on the interval where $\sin_p$ attains equal
  values. This preserves the Rayleigh quotient, and so there is
  equality in \eqref{eq:friedlander_lemma3-p}.
\end{proof}

As a first corollary we obtain a generalization of an estimate proved
in the case $p=2$ by Nicaise~\cite[Th\'eo.~3.1]{Nic_bsm87} (see also
Firedlander~\cite[Lemma 3]{Fri_aif05}).
  
\begin{corollary}
  \label{cor:Nic_Fri}
  Let $\Graph$ have at least one Dirichlet vertex.  Then
  \begin{equation}
    \label{eq:Nic_Fri}
    \eigIp_1(\Graph) \geq \left(\frac{\pi_p}{2L}\right)^p.
  \end{equation}
\end{corollary}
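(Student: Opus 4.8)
The plan is to run, in a simpler form, the very symmetrization argument used in the proof of Lemma~\ref{lemma:two_Dirichlet_vertices}: there the hypothesis of \emph{two} edge-disjoint paths forced $\nu(t)\ge 2$ and produced the factor $2^p$, whereas here I only need the trivial fact that in a connected graph one path always exists, which gives $\nu(t)\ge 1$ and hence the factor $1^p=1$. Concretely, I would first invoke~\cite[Thm.~3.3]{DelRos_amp16} to pick a nonnegative ground state $\psi_1\in W^{1,p}_0(\Graph)$ realizing the infimum in~\eqref{eq:dirichlet-rayleigh-quotient-p}. Being a minimizer it is an eigenfunction, hence $C^1$ on each edge; it vanishes at the Dirichlet vertex $\mv_D$ while attaining its maximum $M:=\max_{\Graph}\psi_1>0$ at some point $x_0$. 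This is the only place where the single Dirichlet vertex is used: without it the ground state would be constant with eigenvalue $0$.

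Next I would fix any path in the connected graph $\Graph$ from $x_0$ to $\mv_D$. Along it $\psi_1$ runs continuously from $M$ down to $0$, so by the intermediate value theorem every value $t\in(0,M)$ is attained at least once; thus $\nu(t)=\#\lvlsf(t)\ge 1$ for almost every such $t$. I would then apply the coarea-plus-H\"older computation exactly as in the proof of Theorem~\ref{thm:connectn} and Lemma~\ref{lemma:two_Dirichlet_vertices}, but symmetrizing $\psi_1$ onto a \emph{single} interval. This produces a monotone function $\psi^\ast$ on $[0,\Lambda]$, where $\Lambda:=|\{x:\psi_1(x)>0\}|\le L$, with $\psi^\ast(0)=0$ and $\|\psi^\ast\|_{L^p}=\|\psi_1\|_{L^p}$; because the H\"older step now contributes only $\nu(t)^p\ge 1$ in place of $\eta^p$, one obtains the P\'olya--Szeg\H{o}-type inequality $\|(\psi^\ast)'\|_{L^p}^p\le\|\psi_1'\|_{L^p}^p$.

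The Rayleigh quotient therefore does not increase under symmetrization, so
\[
  \eigIp_1(\Graph)=\frac{\int_\Graph |\psi_1'|^p\,dx}{\int_\Graph |\psi_1|^p\,dx}
  \ge \frac{\int_0^\Lambda |(\psi^\ast)'|^p\,dx}{\int_0^\Lambda |\psi^\ast|^p\,dx}
  \ge \lambda^{DN}\big([0,\Lambda]\big),
\]
where $\lambda^{DN}([0,\Lambda])$ denotes the first eigenvalue of the $p$-Laplacian on $[0,\Lambda]$ with a Dirichlet condition at $0$ (the image of $\mv_D$) and a Neumann condition at $\Lambda$ (the image of the maximum point $x_0$); the middle inequality holds because $\psi^\ast$ is admissible for the corresponding Rayleigh quotient. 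As recorded in the proof of Lemma~\ref{lemma:two_Dirichlet_vertices}, this interval eigenvalue equals $(\pi_p/2\Lambda)^p$. Since $\Lambda\le L$ and $(\pi_p/2\Lambda)^p$ is decreasing in $\Lambda$, I conclude $\eigIp_1(\Graph)\ge(\pi_p/2\Lambda)^p\ge(\pi_p/2L)^p$, which is~\eqref{eq:Nic_Fri}.

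I expect the only delicate point to be the justification of the symmetrization itself --- namely that $\psi_1'\neq 0$ almost everywhere on the support and that the rearranged $\psi^\ast$ lies in $W^{1,p}$ with the stated norm (in)equalities. This, however, is verbatim the passage already carried out in the proof of Theorem~\ref{thm:connectn} (which in turn relies on the edgewise $C^1$-regularity of $p$-Laplacian eigenfunctions), so no new difficulty arises: the whole argument is a word-for-word simplification of Lemma~\ref{lemma:two_Dirichlet_vertices} with $\eta$ replaced by $1$.
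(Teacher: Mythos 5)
Your argument is correct, but it takes a genuinely different route from the paper. The paper does not re-run the symmetrization at all: it forms the doubled graph $\Graph^2$ by replacing every edge with two parallel copies, observes that extending any eigenfunction of $\Graph$ symmetrically to $\Graph^2$ gives $\eigIp_n(\Graph^2)\le\eigIp_n(\Graph)$, splits the (now degree-two) Dirichlet vertex into two degree-one Dirichlet vertices, and then applies Lemma~\ref{lemma:two_Dirichlet_vertices} directly to $\Graph^2$, whose total length is $2L$ and whose edge connectivity is automatically at least two. That reduction is essentially two lines and reuses the already-proven lemma as a black box. You instead redo the one-path symmetrization with $\nu(t)\ge 1$ --- in effect reproving Nicaise's and Friedlander's original argument in the $p$-Laplacian setting --- which is self-contained and makes transparent where the factor $2$ in the denominator comes from (the Dirichlet--Neumann interval of length $\Lambda\le L$), at the cost of having to re-justify the rearrangement machinery (coarea, $\psi_1'\neq 0$ a.e., the $W^{1,p}$ membership of $\psi^\ast$), which you correctly note is verbatim from the proof of Theorem~\ref{thm:connectn}. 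One shared caveat: both your proof and the paper's ultimately rest on the same claim, recorded in the proof of Lemma~\ref{lemma:two_Dirichlet_vertices}, that the first Dirichlet--Neumann eigenvalue of the $p$-Laplacian on an interval of length $\ell$ is $(\pi_p/2\ell)^p$; elsewhere (in the proof of Theorem~\ref{thm:connectn}) the paper carries the extra factor $p-1$, so whatever normalization resolves that discrepancy applies identically to both arguments and does not distinguish them.
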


\begin{proof}
Consider the graph $ \Graph^2 $ obtained from $ \Graph $ by doubling all the edges -- substituting every edge in $ \Graph $ with two parallel
edges of the same length as before and keeping the vertices. Every eigenfunction for $ \Graph $ can be extended to an eigenfunction on $ \Graph^2 $ assigning the
same values on parallel edges. It follows that $ \lambda_n (\Graph^2) \leq \lambda_n (\Graph). $ The graph $ \Graph^2 $
has at least one Dirichlet vertex of degree two, but this vertex can be split into two Dirichlet vertices of degree one. Then
estimate (\ref{eq:friedlander_lemma3-p}) implies (\ref{eq:Nic_Fri}) if one takes into account that the length of $ \Graph^2 $ is twice the length of $ \Graph $.
\end{proof}

When all vertex conditions are Neumann,
Lemma~\ref{lemma:two_Dirichlet_vertices} and its
Corollary~\ref{cor:Nic_Fri} yield a result of Nicaise \cite{Nic_bsm87}
(see also \cite{Fri_aif05}) and a result of Band and Levy \cite{BanLev_prep16}
which is also the special case of Theorem~\ref{thm:connectn} with
$p=2$ and $\eta=2$.

\begin{corollary}
  \label{cor:connect}
  \begin{enumerate}
  \item (\cite[Th\'eo.~3.1]{Nic_bsm87}, no Dirichlet vertices case;
    \cite[Thm~1]{Fri_aif05}, case $j=2$) Let $\Graph$ be a connected graph
    with natural conditions at every vertex.  Then its first
    nontrivial eigenvalue satisfies
    \begin{equation}
      \label{eq:lower_bound_conn1}
      \eigI_2(\Graph) \geq \frac{\pi^2}{L^2} 
    \end{equation}
  \item (\cite[Thm 2.1]{BanLev_prep16}, see also \cite{KurNab_jst14}) If, in addition, $\Graph$ has
    connectivity $\eta \geq 2$, then its smallest nontrivial eigenvalue
    satisfies
    \begin{equation}
      \label{eq:lower_bound_conn2}
      \eigI_2(\Graph) \geq \frac{4\pi^2}{L^2}
    \end{equation}
  \end{enumerate}
\end{corollary}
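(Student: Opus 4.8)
The plan is to reduce both parts to the Dirichlet-type estimates already in hand --- Corollary~\ref{cor:Nic_Fri} for part~(1) and Lemma~\ref{lemma:two_Dirichlet_vertices} for part~(2) --- via the familiar passage to a nodal domain. First I would record the bookkeeping forced by the numbering of this section: when all vertex conditions are natural the ground state is constant with $\eigI_1(\Graph)=0$, so $\eigI_2(\Graph)$ is exactly the first nontrivial Neumann eigenvalue $\eigO_1(\Graph)$ of \eqref{eq:cont-rayleigh-quotient-p} at $p=2$. Let $\psi$ be a corresponding eigenfunction; since $\eigO_1>0$ it changes sign, so $\mathcal{G}_+$ and $\mathcal{G}_-$ are both nonempty, and replacing $\psi$ by $-\psi$ if necessary I may assume $|\mathcal{G}_+|\le L/2$. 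Testing the weak formulation with $\psi\cdot\chi_{\mathcal{G}_+}$ gives, exactly as in the proof of Theorem~\ref{thm:connectn}, the identity $\eigO_1(\Graph)=\int_{\mathcal{G}_+}|\psi'|^2\,dx\big/\int_{\mathcal{G}_+}|\psi|^2\,dx$; and since $\psi$ vanishes on the boundary of $\mathcal{G}_+$, viewing $\mathcal{G}_+$ as a metric graph with degree-one Dirichlet vertices at these boundary points gives $\psi|_{\mathcal{G}_+}\in W^{1,2}_0(\mathcal{G}_+)$ and hence $\eigO_1(\Graph)\ge \eigI_1^{(2)}(\mathcal{G}_+)$.

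For part~(1) it then suffices to apply Corollary~\ref{cor:Nic_Fri} to $\mathcal{G}_+$, which has at least one Dirichlet vertex: this yields $\eigI_1^{(2)}(\mathcal{G}_+)\ge \bigl(\pi/(2|\mathcal{G}_+|)\bigr)^2\ge (\pi/L)^2$ by $|\mathcal{G}_+|\le L/2$, which is \eqref{eq:lower_bound_conn1}. If $\mathcal{G}_+$ happens to be disconnected one simply runs the argument on the connected component carrying the maximum of $\psi$, which only shortens the relevant length and so strengthens the bound.

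For part~(2) the extra hypothesis $\eta\ge 2$ should buy the factor $2^2=4$ by upgrading Corollary~\ref{cor:Nic_Fri} to Lemma~\ref{lemma:two_Dirichlet_vertices}, which delivers the sharper $\eigIp_1\ge(\pi_p/L)^p$. Concretely, I would identify $\mathcal{G}_+$ (boundary made Dirichlet) with the graph of that lemma: it has Dirichlet vertices, and the point to verify is that \emph{merging} all of them produces a graph of edge connectivity at least two. Granting this, Lemma~\ref{lemma:two_Dirichlet_vertices} gives $\eigI_1^{(2)}(\mathcal{G}_+)\ge(\pi/|\mathcal{G}_+|)^2\ge(2\pi/L)^2=4\pi^2/L^2$, which is \eqref{eq:lower_bound_conn2}. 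As a cheaper alternative, I note that part~(2) is literally the case $p=2$, $\eta=2$ of \eqref{eq:connectp2} in Theorem~\ref{thm:connectn}, since there $(p-1)(2\pi_p/L)^p=4\pi^2/L^2$.

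The main obstacle is precisely the connectivity bookkeeping in part~(2): confirming that the nodal domain $\mathcal{G}_+$, with its zero-boundary collapsed to a Dirichlet vertex, still satisfies the connectivity hypothesis of Lemma~\ref{lemma:two_Dirichlet_vertices}. This is where $\eta\ge 2$ must enter, and the clean way to see it is via Lemma~\ref{lemma:sizeofnu}: the set being collapsed is (a subset of) a level set of $\psi$, and a bridge in the merged graph would force a level set of size one strictly between the minimal and maximal values attained, contradicting $\nu(t)\ge\eta\ge 2$. Care is needed because $\mathcal{G}_+$ need not be connected and its boundary may comprise several points, so I would argue in terms of the whole zero set rather than about $\mathcal{G}_+$ directly; should this prove delicate, quoting Theorem~\ref{thm:connectn} for part~(2) already closes the argument, at the cost of being less self-contained.
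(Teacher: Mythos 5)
Your proposal is correct and follows essentially the same route as the paper: pass to a sign/nodal region of total length at most $L/2$ with Dirichlet conditions on its boundary, then invoke Corollary~\ref{cor:Nic_Fri} for part (1) and Lemma~\ref{lemma:two_Dirichlet_vertices} for part (2). The paper is in fact terser than you are on the connectivity verification for part (2) (it merely asserts that a nodal domain "satisfies the conditions" of the lemma), so your explicit discussion of that point, and your fallback of quoting Theorem~\ref{thm:connectn} directly, are both sound and consistent with the intended argument.
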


\begin{proof}
  The eigenfunction corresponding to the smallest nontrivial eigenvalue
  has at least two nodal domains\footnote{In the generic case of a
  simple eigenvalue and the corresponding eigenfunction not vanishing on
  the vertices, the number of nodal domains is exactly two by Courant's bound.} 
  since the eigenfunction must be orthogonal to the constant
  function. Apply Corollary~\ref{cor:Nic_Fri} to the nodal domain
  considered as a graph of the total length $\leq L/2$ to obtain
  equation~\eqref{eq:lower_bound_conn1}.

  If the graph has connectivity $\eta \geq 2$, it is easy to see that
  a nodal domain satisfies the conditions of
  Lemma~\ref{lemma:two_Dirichlet_vertices}.  We now apply the Lemma to
  the nodal domain of the total length $\leq L/2$ to obtain
  equation~\eqref{eq:lower_bound_conn2}.
\end{proof}

Finally, we consider the special case of a tree graph with Dirichlet
conditions on some leaves (i.e.\ vertices of degree one). Here we
restrict ourselves to the case $p=2$. To the best of the authors'
knowledge, no analogue of this Lemma has previously appeared in the
literature.

\begin{lemma}
  \label{lem:lower_bounds_trees}
  Let $\Graph$ be a tree with Dirichlet conditions imposed at the
  leaves and natural conditions elsewhere.  Then
  \begin{equation}
    \label{eq:lower_bound_trees}
    \eigI_1(\Graph) \geq \frac{\pi^2}{(\diam{\Graph})^2},
  \end{equation}
  where $\diam{\Graph}$ is the diameter of the graph (the maximal distance between a
  pair of leaves).

  If $\Graph$ has one Neumann leaf and all other leaves Dirichlet, the
  bound is
  \begin{equation}
    \label{eq:lower_bound_trees_1N}
    \eigI_1(\Graph) \geq \frac{\pi^2}{4(\diam{\Graph})^2},
  \end{equation}
\end{lemma}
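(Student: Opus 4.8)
The plan is to reduce the problem to a one-dimensional comparison on a single path through the maximum of the ground state, so that the restricted eigenfunction becomes a Dirichlet (or Dirichlet--Neumann) \emph{subsolution} on an interval of length at most $\diam\Graph$. Throughout I write $\lambda:=\eigI_1(\Graph)$ and let $\psi_1>0$ be the ground state, which is simple and may be taken strictly positive in the interior by \cite[Thm.~3.3]{DelRos_amp16}. Since the Dirichlet leaves force $\psi_1=0$ there, the maximum $M:=\max_\Graph\psi_1$ is attained at an interior point $x_0$ (an interior point of an edge, or a vertex of degree $\ge 2$ with the natural condition).

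The crucial structural input I would establish first is that $\psi_1$ is \emph{unimodal from $x_0$}: along the unique path in the tree joining $x_0$ to any other point, $\psi_1$ is non-increasing; equivalently, every superlevel set $\{\psi_1>t\}$ is connected and contains $x_0$. I would argue by contradiction. If a component $K$ of $\{\psi_1>t\}$ misses $x_0$, then $\psi_1=t$ on $\partial K$ while $\psi_1>t$ inside, and $K$ contains no Dirichlet leaf; the weak maximum principle for $-\frac{d^2}{dx^2}-\lambda$ on $K$ (which holds whenever $\lambda<\lambda_1^{\mathrm{Dir}}(K)$) is violated, forcing $\lambda\ge\lambda_1^{\mathrm{Dir}}(K)$. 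On the other hand, extending any $H^1_0(K)$-function by zero yields an admissible competitor for $\Graph$, since it still vanishes at every Dirichlet leaf; hence $\lambda_1^{\mathrm{Dir}}(K)\ge\lambda$, and strictly so, because equality would produce a ground state of $\Graph$ supported in the proper subgraph $K$, contradicting positivity. This contradiction gives unimodality. In the one-Neumann-leaf case the same argument applies, and concavity ($-\psi_1''=\lambda\psi_1>0$) together with $\psi_1'(\ell_N)=0$ then forces the maximum to sit exactly at the Neumann leaf $\ell_N$.

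With unimodality in hand I would choose a \emph{longest} path $P$ through $x_0$, running downhill to the farthest leaf in two distinct components of $\Graph\setminus\{x_0\}$; its endpoints are leaves, so $|P|=h_1+h_2\le\diam\Graph$. Restricting gives $F:=\psi_1|_P$, an element of $H^1_0$ of an interval of length $|P|$ (Dirichlet at both ends in the first case; Neumann at $\ell_N$, with $F'=0$ there, and Dirichlet at the far end in the second). Integrating $-\psi_1''=\lambda\psi_1$ by parts along $P$, the boundary terms at the endpoints vanish, and the only extra contributions are the slope jumps $J_v:=F'(v^+)-F'(v^-)$ at the branch vertices $v$ on $P$; the Kirchhoff condition identifies $J_v=-\sum_b\partial_b\psi_1(v)$, the sum running over the edges $b$ leaving $P$ at $v$. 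This produces the energy identity
\begin{displaymath}
	\int_P |F'|^2\,dx = \lambda \int_P |F|^2\,dx - \sum_{v} F(v)\,J_v ,
\end{displaymath}
where the sum is over the interior (branch) vertices of $P$. By unimodality each branch derivative satisfies $\partial_b\psi_1(v)\le 0$, whence $J_v\ge0$; since also $F(v)>0$ at interior points, the correction is nonnegative and the Rayleigh quotient of $F$ on $P$ is at most $\lambda$.

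Finally I would invoke the one-dimensional Poincaré (Friedrichs) inequality on the interval: for $F\in H^1_0([0,|P|])$ one has $\int_P|F'|^2/\int_P|F|^2\ge\pi^2/|P|^2$ in the all-Dirichlet case, while a single Neumann end gives the bound $\pi^2/(4|P|^2)$. Combining with the subsolution estimate above and with $|P|\le\diam\Graph$ yields $\lambda\ge\pi^2/(\diam\Graph)^2$ and $\lambda\ge\pi^2/(4(\diam\Graph)^2)$, respectively. The main obstacle is the unimodality step: the integration by parts and the interval estimate are routine, but the sign $J_v\ge0$ — that $\psi_1$ never increases into an off-path branch — is exactly what turns the restriction into a Dirichlet subsolution, and it is precisely there that the positivity and minimality of the ground state, via connectedness of its superlevel sets, must be used.
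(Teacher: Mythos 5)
Your argument fails at exactly the step you identify as crucial: the unimodality of $\psi_1$ is false in general, and the maximum-principle argument you give for it runs in the wrong direction. On a component $K$ of $\{\psi_1>t\}$ with $t\ge 0$, the function $w=\psi_1-t$ satisfies $-w''-\lambda w=\lambda t\ge 0$, i.e.\ $w$ is a nonnegative \emph{supersolution} of $-u''-\lambda u=0$ vanishing on $\partial K$. The existence of such a positive supersolution does not violate the weak maximum principle when $\lambda<\lambda_1^{\mathrm{Dir}}(K)$ --- on the contrary, it is essentially the Barta/Allegretto--Piepenbrink characterization of $\lambda\le\lambda_1^{\mathrm{Dir}}(K)$ --- so no contradiction arises and you cannot conclude $\lambda\ge\lambda_1^{\mathrm{Dir}}(K)$. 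Concretely, unimodality fails for the interval $[-1,1+\delta]$ with Dirichlet ends and a short Dirichlet pendant of length $\epsilon$ attached at $0$: for small $\epsilon$ the ground state is tiny at $0$ yet has an interior local maximum on $[-1,0]$, so its superlevel sets are disconnected. Worse, the approach cannot be repaired by a cleverer choice of path: take a star with three long Dirichlet edges (lengths $1$, $0.9$, $0.9$, say) and a short Dirichlet pendant at the centre $\mv$. Then $\psi_1(\mv)$ is small, each long edge carries its own interior local maximum, and \emph{every} leaf-to-leaf path through the global maximum leaves at least one off-path branch into which $\psi_1$ initially increases, so $\partial_b\psi_1(\mv)>0$ for that branch, the correction term in your energy identity has the wrong sign, and the restriction to $P$ is no longer a subsolution. (The claim in the one-Neumann-leaf case that the global maximum sits at the Neumann leaf fails for the same reason: concavity only gives a \emph{local} maximum there.)

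The integration by parts, the identification of the jump $J_\mv$ via the Kirchhoff condition, and the final interval comparison are all fine; what is missing is any valid justification of the sign $J_\mv\ge 0$, and the structural fact you invoke for it is simply not true on trees with several Dirichlet leaves. The paper's proof avoids the eigenfunction entirely: it repeatedly applies the second inequality of Lemma~\ref{lem:interlacing_maxD} with $r=1$ at each vertex of degree at least three, at every step choosing the detachment that does not increase $\eigI_1$; this dismembers the tree into disjoint intervals with Dirichlet endpoints, each a path between two leaves and hence of length at most $\diam\Graph$, which gives \eqref{eq:lower_bound_trees} at once, and the one-Neumann-leaf bound follows by doubling the tree and reflecting. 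As written, your proposal is not a proof.
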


\begin{proof}
  We repeatedly apply Lemma~\ref{lem:interlacing_maxD} (second
  inequality) with $r=1$ at vertices of degree three or more, choosing
  the graph with the minimal $\eigI_1(\Graph_{E_D})$ at every step.
  We stop when there are no vertices of degree larger than $2$ and we
  absorb all vertices of degree two into the edges.  The graph
  is thus reduced to a collection of disjoint intervals with
  Dirichlet conditions, and the first eigenvalue comes from the
  longest of them.  The longest interval possible is the path giving
  the diameter of the graph.

  If the tree has one Neumann leaf, we double the tree and reflect its
  eigenfunction across this leaf to obtain a tree with all leaves
  Dirichlet and the diameter less than or equal to $2\diam{\Graph}$.
\end{proof}

%%%%%%%%%%%%%%%%%%%%%
\subsection{Lower bounds for all eigenvalues}

We are now in position to obtain an improved version of the lower estimate due
to Friedlander \cite{Fri_aif05} on \emph{all} eigenvalues of a quantum graph.
Here and in the rest of Section~\ref{sec:qg_betti} we shall refrain from considering the $p$-Laplacian for
$p\ne 2$, as the theory of higher eigenvalues for these nonlinear operators is
rather technical and goes beyond the scope of this article.

\begin{theorem}
  \label{thm:low_bound_all_eig}
  Let $\Graph$ be a quantum graph with $|N|\geq 0$ vertices of degree one with the Neumann
  condition (and any number $|D|\geq 0$ of vertices with the Dirichlet condition).
  Assume that $\Graph$ is not a cycle. Then for all $k \geq 2$
 \begin{equation}
    \label{eq:low_bound_all_eig}
    \eigI_k(\Graph) \geq 
    \begin{cases}
      \left(k - \frac{|N|+\beta}2 \right)^2 \dfrac{\pi^2}{L^2}  
      & \mbox{if } k \geq |N| + \beta\\[10pt]
      \dfrac{k^2 \pi^2}{4 L^2} & \mbox{otherwise},
    \end{cases}
  \end{equation}
  where
  \begin{displaymath}
    \beta := E - V + 1 \geq 0
  \end{displaymath}
  is the first Betti number of the graph.
  If there is at least one vertex with the Dirichlet condition, then we may replace the assumption $k\geq 2$ by $k\geq 1$.
\end{theorem}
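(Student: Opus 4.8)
The plan is to prove \eqref{eq:low_bound_all_eig} by induction on the first Betti number $\beta$, reducing the general statement to the case of a tree, and then to handle the tree by a nodal-domain analysis fed by the symmetrization of Theorem~\ref{thm:connectn}. For the reduction I would repeatedly remove cycles by a surgery that leaves $L$ and the quantity $|N|+\beta$ untouched while only lowering eigenvalues. If $\Graph$ is not a cycle and $\beta\geq 1$, there is a vertex $\mv$ of degree at least three lying on some cycle. I detach from $\mv$ one of the edges of that cycle, turning its endpoint at $\mv$ into a new degree-one vertex carrying the natural condition (any resulting degree-two vertex is absorbed into its edge). Because the detached edge lay on a cycle, the graph stays connected, so this lowers $\beta$ by one and raises $|N|$ by one, leaving $|N|+\beta$ and $L$ invariant. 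Detaching an edge-end merely relaxes a continuity constraint, i.e.\ enlarges the form domain $H^1(\Graph)$, so by the min-max characterisation every $\eigI_k$ can only decrease; since the right-hand sides of \eqref{eq:low_bound_all_eig} depend only on $|N|+\beta$, $k$ and $L$, it suffices to prove the estimate on the resulting tree. This is exactly where the hypothesis that $\Graph$ is not a cycle is used: a bare cycle has no vertex of degree $\geq 3$, and one checks that \eqref{eq:low_bound_all_eig} genuinely fails on it for large $k$.

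For the tree case, let $\psi$ be an eigenfunction for $\mu:=\eigI_k(\Graph)$. In the generic situation (simple eigenvalue, $\psi$ nonvanishing at the vertices) $\psi$ has exactly $k$ nodal domains $\Omega_1,\dots,\Omega_k$, a Sturm-type fact special to trees (equivalently, the nodal surplus vanishes when $\beta=0$). On each $\Omega_i$ the restriction of $\psi$ is a ground state with Dirichlet data on its interior zeros and the inherited condition on any original leaf, so the symmetrisation from the proof of Theorem~\ref{thm:connectn} applies: it produces a star of $\eta_i$ equal edges with a natural condition at the centre (the maximum of $\psi|_{\Omega_i}$) and Dirichlet conditions at the tips, giving $|\Omega_i|\geq \eta_i\,\pi/(2\sqrt{\mu})$, where $\eta_i$ is the number of edge-disjoint paths from that maximum to the zero set of $\Omega_i$ (so $\eta_i\geq 2$ exactly in the situation of Lemma~\ref{lemma:two_Dirichlet_vertices}, and $\eta_i\geq 1$ as in Corollary~\ref{cor:Nic_Fri}). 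Summing, $L=\sum_i|\Omega_i|\geq \tfrac{\pi}{2\sqrt{\mu}}\sum_i\eta_i$, whence $\sqrt{\mu}\geq \tfrac{\pi}{2L}\sum_i\eta_i$.

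It then remains to show $\sum_i\eta_i\geq 2k-\min(|N|,k)$. In a tree a nodal domain can have $\eta_i=1$ only if, from its maximum, every branch but one dead-ends before reaching a zero; since the only leaves that are not zeros are the Neumann leaves, such a domain must contain at least one Neumann leaf, and distinct domains contain distinct leaves. Hence the number $w$ of domains with $\eta_i=1$ satisfies $w\leq\min(|N|,k)$, so $\sum_i\eta_i\geq 2(k-w)+w=2k-w\geq 2k-\min(|N|,k)$. Substituting gives $\eigI_k\geq\bigl(k-\tfrac{\min(|N|,k)}{2}\bigr)^2\pi^2/L^2$, which is precisely the two regimes of \eqref{eq:low_bound_all_eig} once $\beta=0$ (the first when $k\geq|N|$, the second when $k<|N|$). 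The strengthening to $k\geq 1$ when a Dirichlet vertex is present is the case $\nu=1$: the ground state has one sign, and the same symmetrisation applied to the whole graph (now Corollary~\ref{cor:Nic_Fri}, or Lemma~\ref{lemma:two_Dirichlet_vertices} if $\eta_1\geq 2$) yields the bound directly.

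The part I expect to be genuinely delicate is the tree case, not the surgery. One must justify the nodal count $\nu=k$ and the per-domain value of $\eta_i$ rigorously, and above all handle the non-generic configurations (a degenerate $\mu$, or $\psi$ vanishing at vertices), where both the nodal-domain decomposition and the screening argument for weak domains have to be run with care, presumably through a perturbation or limiting argument, or by selecting an eigenfunction that avoids vertex zeros. The clean bookkeeping inequality $w\leq\min(|N|,k)$ is what makes the two cases of \eqref{eq:low_bound_all_eig} emerge, and establishing it in all degenerate situations is the crux of the argument.
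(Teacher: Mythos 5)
Your proposal is correct and follows essentially the same route as the paper: the same cycle-cutting surgery preserving $|N|+\beta$ and $L$ to reduce to a tree, the same decomposition of a generic $k$-th eigenfunction into exactly $k$ nodal subtrees, the same per-domain bounds $L_j\sqrt{\mu}\geq \pi$ or $\pi/2$ according to whether the subtree has only Dirichlet leaves or also a Neumann one, and the same bookkeeping that at most $\min(|N|+\beta,k)$ subtrees can be of the weaker kind. The one point you flag as delicate --- simplicity of the eigenvalue and non-vanishing of the eigenfunction at vertices --- is dispatched in the paper exactly as you anticipate, by passing to an arbitrarily small perturbation via the result of Berkolaiko and Liu \cite{BerLiu_jmaa17}.
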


\begin{remark}
  Friedlander \cite[Thm.~1]{Fri_aif05} proved the estimate $\eigI_k(\Graph) \geq
  \frac{k^2 \pi^2}{4 L^2}$ for all $k$, but it is easy to see that the first
  expression in \eqref{eq:low_bound_all_eig} gives a better bound for all $k \geq |N|
  + \beta$. Theorem~\ref{thm:low_bound_all_eig} has the added advantage of being asymptotically sharp
  in the sense that both the eigenvalue and its bound have the same
  asymptotic form $\frac{k^2\pi^2}{L^2}+ o(1)$ as $k\to\infty$.
\end{remark}

\begin{proof}
  If $\Graph$ is not a tree, we find an edge whose removal would not
  disconnect the graph.  Let $\mv$ be a vertex to which this edge is
  incident; since $\Graph$ is not a cycle, without loss of generality
  we can assume its degree is 3 or larger (otherwise this vertex can
  be absorbed into the edge).  We disconnect the edge from this
  vertex, reducing $\beta$ by one and creating an extra vertex of
  degree one where we impose the Neumann condition, see
  Figure~\ref{fig:beta_break}. We keep natural conditions at
  $\mv$. Then the new graph is not a cycle, as a new vertex of degree
  $1$ was created.  We may therefore repeat the process inductively
  until we obtain a tree $\Tree$ with $|N'| = |N|+\beta$ Neumann
  vertices.

  Since the eigenvalues are reduced at every step, $\eigI_k(\Graph) \geq
  \eigI_k(\Tree)$.  It is therefore enough to verify the inequality for trees.

  \begin{figure}
    \centering
    \includegraphics{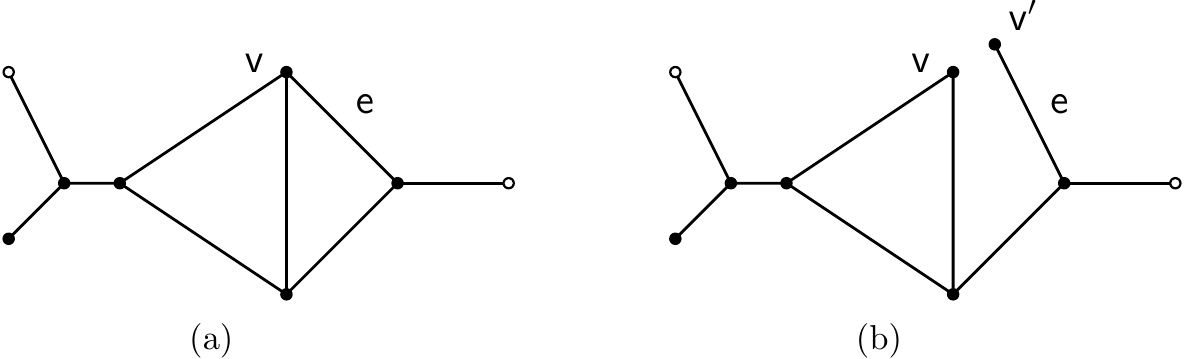}
    \caption{Disconnecting the edge $\me$ from the vertex $\mv$ in the proof of
      Theorem~\ref{thm:low_bound_all_eig}. This operation reduces $\beta$ by 1
      at the expense of increasing the number of Neumann vertices by 1.}
    \label{fig:beta_break}
  \end{figure}

  Given a tree $\Graph$ we can find an arbitrarily small perturbation under which 
  the $k$-th eigenvalue is simple and its
  eigenfunction is nonzero on vertices \cite{BerLiu_jmaa17}.  In these
  circumstances the $k$-th eigenfunction has exactly $k$ nodal domains
  \cite{PokPryObe_mz96,Sch_wrcm06} (see also \cite[Thm.~6.4]{BerKuc_incol12} for
  a short proof).  Each nodal domain is a subtree $\Tree_j$, and with vertex
  conditions inherited from $\Graph$ (plus Dirichlet conditions on the nodal
  domain boundaries), $\eigI_k(\Graph)$ is the first eigenvalue of the subtree.

  There are at most $|N|$ subtrees with some Neumann conditions on their leaves.
  Since these are nodal subtrees ($k>1$), there are also some leaves with
  Dirichlet conditions and we can use estimate \eqref{eq:Nic_Fri} in the form
  $L_j \sqrt{\eigI} \geq \pi/2$.  The same conclusion is true if $k=1$ and $\Graph$ 
  has at least one Dirichlet vertex.

  If $k \geq |N|$, we also have at least
  $k-|N|$ subtrees with \emph{only} Dirichlet conditions at the leaves, for
  which we can use the bound of Theorem~\ref{lem:lower_bounds_trees} but with
  the diameter substituted by the total length of the subtree, i.e.\ $L_j
  \sqrt\eigI \geq (\diam{\Tree}_j)\sqrt\eigI \geq \pi$.  We therefore have
  \begin{equation*}
    L \sqrt{\eigI_k(\Graph)} 
    = \sum_{j=1}^k L_j \sqrt{\eigI_1(\Tree_j)}
    \geq |N| \frac{\pi}{2} + (k-|N|) \pi = \left(k -
      \tfrac{|N|}{2}\right) \pi.\
  \end{equation*}
  
  When $k < |N|$, we use estimate
  \eqref{eq:Nic_Fri} for each of the $k$ nodal subtrees, obtaining
  Friedlander's bound.
\end{proof}

%%%%%%%%%%%%%%%%%%%%%
\subsection{Upper bound for all eigenvalues}

We now slightly extend and improve the beautiful recent result of
Ariturk \cite[Thm.~1.2]{Ari_prep16}, providing at the same time a simpler
proof.

\begin{theorem}
  \label{thm:upp_bound_all_eig}
  Let $\Graph$ be a connected quantum graph with Dirichlet or Neumann
  conditions at the vertices of degree one and natural condition
  elsewhere.  If $\Graph$ is not a cycle, then for all
  $k\in \mathbb N$
 \begin{equation}
    \label{eq:upp_bound_all_eig}
    \eigI_k (\Graph)
    \leq \left(k - 2 + \beta + |D| + \tfrac{|N|+\beta}2\right)^2 \frac{\pi^2}{L^2},
  \end{equation}
  where the set of Dirichlet vertices is denoted by $D$ and the set of Neumann
  vertices of degree one is denoted by $N$.
\end{theorem}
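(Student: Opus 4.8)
The plan is to follow the architecture of the proof of Theorem~\ref{thm:low_bound_all_eig} in reverse: first strip off the $\beta$ independent cycles to reduce to a tree, and then decouple the tree into a disjoint union of intervals whose spectra can be counted by hand. Throughout I write $c_k(\Graph):=k-2+\beta+|D|+\tfrac{|N|+\beta}2$ for the quantity appearing in \eqref{eq:upp_bound_all_eig}. The one essential twist compared with the lower bound is that every surgery must be read through the \emph{opposite} interlacing inequality.

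For the reduction to a tree I would perform exactly the cut used for the lower bound (detach a non-bridge edge from an incident vertex of degree $\ge 3$, absorbing the resulting degree-two vertex, as in Figure~\ref{fig:beta_break}), but exploit that deleting a single continuity condition at a vertex relaxes the form domain by codimension one. The min--max principle then yields, besides the monotonicity $\eigI_k(\Graph')\le\eigI_k(\Graph)$ used before, the index-shifted bound $\eigI_k(\Graph)\le\eigI_{k+1}(\Graph')$. Each such cut lowers $\beta$ by one, raises $|N|$ by one and shifts the index by one, so after $\beta$ steps one reaches a tree $\Tree$ with $|N|+\beta$ Neumann and $|D|$ Dirichlet leaves and $\eigI_k(\Graph)\le\eigI_{k+\beta}(\Tree)$; a direct substitution gives $c_{k+\beta}(\Tree)=c_k(\Graph)$, so the $+1$ index shift exactly absorbs the cost $\tfrac32$ of each cycle. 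I expect this balancing to be the main conceptual obstacle. A naive replacement of a cycle by two Dirichlet endpoints is a genuine form-domain restriction but overshoots $c_k$ by $\tfrac12$ per cycle, and this overshoot is unavoidable — a bare cycle already violates the claimed bound, which is precisely why $\Graph$ is assumed not to be a cycle and why the Neumann cut (read via codimension-one interlacing) rather than a Dirichlet cut is the right move.

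It then remains to prove $\eigI_k(\Tree)\le\bigl(c_k(\Tree)\,\pi/L\bigr)^2$ for a tree $\Tree$ of total length $L$ with $m=|N|+|D|$ leaves, where now $\beta=0$ and $c_k(\Tree)=k-2+|D|+\tfrac{|N|}2$. At each branch vertex, which in a tree is separating, I would apply Lemma~\ref{lem:interlacing_maxD} with $r=d-2$ (where $d$ is the degree), keeping two edges joined and imposing Dirichlet on the remaining ones; iterating over all branch vertices gives $\eigI_k(\Tree)\le\max\eigI_k(\mathcal U)$, where each $\mathcal U$ is a disjoint union of exactly $m-1$ intervals of total length $L$. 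Their endpoints are of three kinds — Neumann leaves, Dirichlet leaves and Dirichlet cut-points — and, writing $a,b,c$ for the numbers of Neumann--Neumann, Dirichlet--Neumann and Dirichlet--Dirichlet intervals, one always has $2a+b=|N|$ and $b+2c=|N|+2|D|-2$, independently of the pairing chosen. Finally, with $\Lambda=\bigl(c_k(\Tree)\,\pi/L\bigr)^2$, I would count the eigenvalues of $\mathcal U$ below $\Lambda$ from the explicit interval spectra $(n\pi/\ell)^2$, $\bigl((n-\tfrac12)\pi/\ell\bigr)^2$ and $\bigl((n-1)\pi/\ell\bigr)^2$: the endpoint corrections ($+1$ per Neumann--Neumann interval and $+\tfrac12$ per Dirichlet--Neumann interval) together with $\sum_j\ell_j=L$ and the two identities above make this count strictly exceed $k-1$, hence be $\ge k$, so that $\eigI_k(\mathcal U)\le\Lambda$ for \emph{every} decoupling and thus for the maximum. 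This elementary but careful Weyl-type bookkeeping is the technical heart of the argument — it is what pins down the exact constants — and one checks directly that it is sharp on a single interval and, through the cycle step above, on the theta graph.
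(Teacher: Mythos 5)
Your proposal is correct, and its first half (detaching $\beta$ non-bridge edges at vertices of degree $\ge 3$ via the codimension-one/rank-one interlacing $\eigI_k(\Graph)\le\eigI_{k+1}(\Graph')$, so that $\eigI_k(\Graph)\le\eigI_{k+\beta}(\Tree)$ and the constant is exactly preserved) is the same reduction the paper performs, citing \cite[Thm 3.1.10]{BerKuc_graphs} for the index shift. Where you genuinely diverge is the tree case. The paper argues by induction on the number of edges: it applies Lemma~\ref{lem:interlacing_maxD} with $r=1$ at a single branch vertex, splits $\Tree$ into two subtrees $\Tree_1,\Tree_2$ with $\eigI_k \le \eigI_j(\Tree_1)$ and $\eigI_k\le\eigI_{k-j+1}(\Tree_2)$, and adds the inductive bounds in the form $L\sqrt{\eigI_k}\le L_1\sqrt{\eigI_j(\Tree_1)}+L_2\sqrt{\eigI_{k-j+1}(\Tree_2)}$, with the single-edge cases as the base. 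You instead decouple the whole tree in one pass, taking $r=d-2$ at every branch vertex (the third inequality of the lemma keeps the index fixed), land on a disjoint union of exactly $m-1$ intervals, and close with a Weyl count: your identities $2a+b=|N|$, $b+2c=|N|+2|D|-2$ give a total of strictly more than $L\sqrt{\Lambda}/\pi-\tfrac{b}{2}-c=k-1$ eigenvalues below $\Lambda=(c_k\pi/L)^2$, hence at least $k$, for \emph{every} decoupling. I checked the bookkeeping ($\sum(d_i-2)=m-2$ cut points, hence $m-1$ intervals and $2m-2$ endpoints; the strictness of $\lfloor x\rfloor>x-1$ needed to pass from $>k-1$ to $\ge k$) and it is sound. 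Your route makes the provenance of the constant $|D|+\tfrac{|N|}{2}$ completely transparent and dispenses with the induction and its index gymnastics; the price is that iterating the lemma on an already-disconnected graph needs the (routine, and also implicitly used by the paper in Lemma~\ref{lem:lower_bounds_trees}) extension of Lemma~\ref{lem:interlacing_maxD} beyond connected graphs. Your side remark that a Dirichlet cut of a cycle would overshoot the constant by $\tfrac12$ per cycle, forcing the Neumann detachment, is also accurate and explains the paper's choice.
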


\begin{remark}\label{rem:gregarit}
  (a) Both Theorem~\ref{thm:low_bound_all_eig} and
  Theorem~\ref{thm:upp_bound_all_eig} are false on cycle graphs. If
  $\Graph$ is a loop with a pendant edge attached to it (i.e., a
  ``lollipop'' or ``lasso'' graph), and with the Neumann condition at
  the pendant vertex, then Theorems~\ref{thm:low_bound_all_eig}
  and~\ref{thm:upp_bound_all_eig} result in the two-sided bound
  \begin{displaymath}
	\frac{(k-1)^2\pi^2}{L^2} \leq \lambda_k (\mathcal{G})\leq 
	\frac{k^2 \pi^2}{L^2}
  \end{displaymath}
  for $k \geq 2$ large enough. Remarkably, as the length of the
  pendant edge converges to zero, odd-numbered eigenvalues converge to
  the lower bound while even-numbered ones converge to the upper bound.

  (b) Similarly to the bound of Theorem~\ref{thm:low_bound_all_eig}, the upper bound
  in \eqref{eq:upp_bound_all_eig} is asymptotically sharp.  The only other
  asymptotically sharp bounds known to the authors is the one by Nicaise
  \cite[Th. 2.4]{Nic_bsm87}, who proved that for \emph{equilateral} graphs
  \begin{equation}
    \label{eq:Nic_equi}
    \frac{(k-1-L)^2\pi^2}{L^2}\le \eigI_k(\Graph)
    \le \frac{(k-1+L)^2\pi^2}{L^2}\ .    
  \end{equation}
  Under some circumstances (for example for complete graphs with many
  vertices), bound \eqref{eq:Nic_equi} is tighter than
  \eqref{eq:upp_bound_all_eig}; informal scaling arguments suggest
  that, for large $\beta$, the factor in front of $\beta$ in
  \eqref{eq:upp_bound_all_eig} should be 1 instead of the current
  $\frac{3}{2}$ (see also the discussion following Theorem 1.2 in
  \cite{Ari_prep16}; note that in the results of \cite{Ari_prep16} the
  corresponding factor is 2).
\end{remark}

\begin{proof}[Proof of Theorem~\ref{thm:upp_bound_all_eig}]
  If $\Graph$ is not a tree (i.e.\ if $\beta > 0$) and not a cycle, we repeat the 
  process described at the beginning of the
  proof of Theorem~\ref{thm:low_bound_all_eig}, disconnecting $\beta$ edges at
  vertices and creating a tree $\Tree$ with $\beta$ additional Neumann vertices
  of degree one.  At every step, the eigenvalue goes down, but not further than
  the next eigenvalue (see \cite[Thm 3.1.10]{BerKuc_graphs}), therefore we have
  $\eigI_k(\Graph) \leq \eigI_{k+\beta}(\Tree)$ and the bound for general
  graphs follows from the bound for trees, $\beta=0$.

  We will prove the result assuming $\Graph$ is a tree by induction on
  the number of edges.  The inequality turns into equality for a
  single edge with either Dirichlet or Neumann or mixed conditions.

  Choose an arbitrary vertex $\mv$ of degree three or
  more and apply Lemma~\ref{lem:interlacing_maxD} (the third
  inequality) with $r=1$.  Let $\Graph'$ be the graph realizing this
  inequality; it is a disjoint union of two trees, denote them by
  $\Tree_1$ and $\Tree_2$.  Without loss of generality,
  $\eigI_k(\Graph')$ is an eigenvalue of $\Tree_1$; denote its
  position in the spectrum of $\Tree_1$ by $j \leq k$.  We therefore
  have
  \begin{equation*}
    \eigI_k(\Graph') = \eigI_j(\Tree_1) 
    \qquad \mbox{and} \qquad
    \eigI_k(\Graph') \leq \eigI_{k-j+1}(\Tree_2).
  \end{equation*}
  Denoting by $L_1$ and $L_2$ the total lengths of the two trees, we
  have $L = L_1 + L_2$.  Denoting by $D_1$ and $D_2$ the Dirichlet
  vertices, we also have $|D| = |D_1| + |D_2| - 1$, since one
  Dirichlet vertex was added in the process of application of
  Lemma~\ref{lem:interlacing_maxD} with $r=1$.  We now use the
  inductive hypothesis for the two trees $T_1$ and $T_2$ to get
  \begin{align*}
    L \sqrt{\eigI_k(\Graph)} 
    &\leq L_1 \sqrt{\eigI_j(\Tree_1)} + L_2
    \sqrt{\eigI_{k-j+1}(\Tree_2)} \\
    &\leq \pi \left(j - 2 + |D_1| + \tfrac{|N_1|}2\right) 
    + \pi \left(k - j + 1 - 2 + |D_2| + \tfrac{|N_2|}2\right) \\
    &= \pi \left(k - 3 + |D| + 1 + \tfrac{|N|}2\right).
  \end{align*}
  This completes the proof.
\end{proof}

%%%%%%%%%%%%%%%%%%%%%%%%%%%%%%%%
\subsection{Implications for the normalized and discrete Laplacians}
\label{rem:otherthanfiedler-higher}

It is known that formula~\eqref{eq:below} can be generalized to the higher eigenvalues: indeed, von Below has shown that 
\begin{equation}\label{eq:highervonbelow-2}
\mu(\Graph) = \left(\arccos (1-\alpha(\mG))\right)^2,
\end{equation}
cf.~\cite[Thm.\ on p.\ 320]{Bel_laa85},
for all eigenvalues $0=\mu_0(\Graph)<\mu_1(\Graph)\le \ldots $ of $\Delta$ within the interval $[0,\pi^2]$  and $0=\alpha_0(\mG)<\alpha_1(\mG)\le\ldots \le\alpha_{V-1}(\mG)$ of $\mathcal L_{\rm norm}$ within the interval $[0,2]$ (with the exception of $\mu=\pi^2$, which is an eigenvalue of $\Delta$ whenever $\mG$ is a non-bipartite unicyclic graph although $2$ is not an eigenvalue of $\mathcal L_{\rm norm}$); also, the multiplicities coincide, so that in fact switching to the notation adopted in this section we can write
\begin{equation}\label{eq:highervonbelow-3}
\alpha_{k-1}(\mG)=1-\cos \sqrt{\lambda_k(\Graph)}\qquad \hbox{for all }k=1,\ldots,V\ .
\end{equation}
%again with the exception of $k=V$ in the case of non-bipartite unicyclic graphs.
Accordingly, all of our results in this section can be translated into both upper and lower estimates on~\emph{all} eigenvalues $\alpha_k(\mG)$ of normalized Laplacians and we obtain the following.

\begin{proposition}
\label{prop:discrete-greg}
Let $\mG$ be a connected combinatorial graph without loops, which has $V^* \geq 0$ vertices of degree $>1$. Then the eigenvalues $\alpha_k(\mG)$ of the normalized Laplacian on $\mG$ satisfy, for all $k=2,\ldots,V$, the estimates
\begin{equation}
\label{eq:condition-estimatediscr-0-concr}
1-\cos\left( \frac{2k-E+V^*-1}{2E}\cdot \pi \right) \le \alpha_{k-1}(\mG),
\end{equation}
as well as
\begin{equation}
\label{eq:condition-estimatediscr-fried}
1-\cos \frac{k\pi}{2E} \le \alpha_{k-1}(\mG),
\end{equation}
and
\begin{equation}
\label{eq:condition-estimatediscr-concr}
\alpha_{k-1}(\mG)\le 1-\cos\left(\frac{2k+3E-V-V^*-1}{2E}\cdot \pi \right),
\end{equation}
where \eqref{eq:condition-estimatediscr-0-concr} holds provided
\begin{equation}
\label{eq:condition-estimatediscr-0}
k\ge E-V^*+1
\end{equation}
and~\eqref{eq:condition-estimatediscr-concr} provided
\begin{equation}
\label{eq:condition-estimatediscr}
2k-V-V^*-1+E\le 0.
\end{equation}
\end{proposition}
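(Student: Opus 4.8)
The plan is to translate the quantum-graph eigenvalue bounds of Theorems~\ref{thm:low_bound_all_eig} and~\ref{thm:upp_bound_all_eig} into statements about $\alpha_{k-1}(\mG)$ by feeding them through von Below's identity~\eqref{eq:highervonbelow-3}. The key observation is that for an \emph{equilateral} metric graph $\Graph$ whose underlying combinatorial graph is $\mG$ (all edges of length $1$), the total length is $L=E$, and every vertex of degree $\ne 2$ in $\mG$ becomes an essential vertex of $\Graph$; since $\mG$ has no loops, the degree-one vertices of $\mG$ are exactly the degree-one vertices of $\Graph$. I would first fix the convention that all vertices of degree one carry the Neumann condition (as in the Laplacian $\mathcal L_{\rm norm}$), so that $|D|=0$ and $|N|$ equals the number of degree-one vertices of $\mG$, while $V^*$ counts the vertices of degree $>1$. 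The Betti number is the same for $\Graph$ and $\mG$, namely $\beta=E-V+1$.

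The main computation is bookkeeping: combine $L=E$, $|D|=0$, and $|N|=V-V^*$ (the degree-one vertices are precisely those \emph{not} of degree $>1$, there being $V-V^*$ of them) with the inequalities for $\lambda_k(\Graph)$. For the lower bound I would take the first branch of~\eqref{eq:low_bound_all_eig}, valid when $k\ge |N|+\beta=(V-V^*)+(E-V+1)=E-V^*+1$, which is exactly condition~\eqref{eq:condition-estimatediscr-0}. There
\begin{displaymath}
  \sqrt{\lambda_k(\Graph)} \geq \left(k-\tfrac{|N|+\beta}{2}\right)\frac{\pi}{L}
  = \left(k-\tfrac{E-V^*+1}{2}\right)\frac{\pi}{E}
  = \frac{2k-E+V^*-1}{2E}\,\pi,
\end{displaymath}
and applying $\alpha_{k-1}=1-\cos\sqrt{\lambda_k}$ gives~\eqref{eq:condition-estimatediscr-0-concr}. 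The ``otherwise'' branch of~\eqref{eq:low_bound_all_eig} yields $\sqrt{\lambda_k(\Graph)}\ge k\pi/(2E)$, which is Friedlander's bound and produces~\eqref{eq:condition-estimatediscr-fried} unconditionally. For the upper bound I substitute the same data into~\eqref{eq:upp_bound_all_eig}:
\begin{displaymath}
  \sqrt{\lambda_k(\Graph)} \leq \left(k-2+\beta+\tfrac{|N|+\beta}{2}\right)\frac{\pi}{E}
  = \frac{2k+3E-V-V^*-1}{2E}\,\pi,
\end{displaymath}
after simplifying $\beta+\tfrac{|N|+\beta}{2}=\tfrac{3\beta+|N|}{2}$ and expanding, which gives~\eqref{eq:condition-estimatediscr-concr}.

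The one genuine subtlety, and the step I expect to require the most care, is the validity of the transference via~\eqref{eq:highervonbelow-3}: the identity $\alpha_{k-1}=1-\cos\sqrt{\lambda_k}$ holds only for eigenvalues lying in $[0,\pi^2]$, i.e. those $\lambda_k(\Graph)$ with $\sqrt{\lambda_k}\le\pi$, equivalently $\alpha_{k-1}\le 2$, and only for $k=1,\dots,V$. This is why the proposition ranges over $k=2,\dots,V$ and why the arguments of the cosines must be verified to lie in $[0,\pi]$ so that $\cos$ is monotone and the inequalities are preserved in the correct direction. For the upper bound~\eqref{eq:condition-estimatediscr-concr} I must also check that condition~\eqref{eq:condition-estimatediscr}, namely $2k-V-V^*-1+E\le 0$, is exactly what guarantees the cosine argument $\tfrac{2k+3E-V-V^*-1}{2E}\,\pi$ stays within a range where $1-\cos$ is applied consistently and the bound remains nontrivial (i.e.\ the argument does not exceed $\pi$). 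Since $\cos$ is decreasing on $[0,\pi]$, a lower bound on $\sqrt{\lambda_k}$ transfers to a lower bound on $\alpha_{k-1}$ and an upper bound to an upper bound, so once the range conditions are secured the directions are automatic; verifying these range conditions against~\eqref{eq:condition-estimatediscr-0} and~\eqref{eq:condition-estimatediscr} is the heart of the proof, the remaining algebra being the elementary substitutions above.
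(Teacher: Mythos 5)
Your approach coincides with the paper's: Proposition~\ref{prop:discrete-greg} is given no separate proof there beyond the remark that the bounds of Theorems~\ref{thm:low_bound_all_eig} and~\ref{thm:upp_bound_all_eig}, applied to the equilateral graph with $L=E$, $|D|=0$, $|N|=V-V^*$ and $\beta=E-V+1$, transfer through von Below's identity~\eqref{eq:highervonbelow-3}, and your handling of the lower bounds and of the monotonicity/range issues is correct.

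There is, however, one concrete slip in your upper-bound computation: the asserted identity $k-2+\beta+\tfrac{|N|+\beta}{2}=\tfrac{2k+3E-V-V^*-1}{2}$ is false. Direct substitution gives $k-2+(E-V+1)+\tfrac{E-V^*+1}{2}=\tfrac{2k+3E-2V-V^*-1}{2}$, i.e.\ the coefficient of $V$ is $-2$, not $-1$. (Sanity check on the path $\mP_n$: there $\beta=0$, $|D|=0$, $|N|=2$, so the exponent in~\eqref{eq:upp_bound_all_eig} is $k-1$, which matches $\tfrac{2k+3E-2V-V^*-1}{2}$ but not $\tfrac{2k+3E-V-V^*-1}{2}=k-1+\tfrac{n}{2}$.) Since the correct expression is the smaller of the two and $1-\cos(\,\cdot\,)$ is increasing on $[0,\pi]$ once~\eqref{eq:condition-estimatediscr} is imposed, the stated inequality~\eqref{eq:condition-estimatediscr-concr} still follows a fortiori --- in fact your computation proves the sharper bound with $-2V$ in the numerator --- but the ``$=$'' must be replaced by ``$\le$'' (or, better, the stronger estimate recorded). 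A secondary caveat, inherited from the paper rather than introduced by you: Theorems~\ref{thm:low_bound_all_eig} and~\ref{thm:upp_bound_all_eig} exclude cycle graphs, so this argument does not cover $\mG=\mC_n$; there the lower bound~\eqref{eq:condition-estimatediscr-0-concr} actually fails already at $k=3$, since $\alpha_2(\mC_n)=1-\cos\tfrac{2\pi}{n}<1-\cos\tfrac{5\pi}{2n}$. You should either exclude cycles from the statement or treat them separately.
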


Observe that conditions~\eqref{eq:condition-estimatediscr-0} and~\eqref{eq:condition-estimatediscr} imply in particular that the arguments of $\cos(\,\cdot\,)$ in~\eqref{eq:condition-estimatediscr-0-concr} and~\eqref{eq:condition-estimatediscr-concr} lie in $[0,\pi]$, the domain of monotonicity of $1-\cos(\,\cdot\,)$. 

\begin{example}
A necessary condition for the lower estimate to be nontrivial (i.e., to hold for at least one $k$) is that $V+V^*\ge E+1$: this is indeed satisfied for complete graphs $\mK_n$ with $n\le 4$ (since then $V+V^*=2n\ge \frac{n(n-1)}{2}+1 =E+1$), cycle graphs $\mC_n$ ($V+V^*=2n\ge n+1=E+1$), path graphs $\mP_n$ ($V+V^*=2n-2\ge n=E+1$), star graphs $\mS_{n+1}$ ($V+V^*=n+2\ge n+1=E+1$), wheel graphs $\mW_ {n+1}$ ($V+V^*=2n+2\ge 2n+1=E+1$, but for no complete bipartite graphs $\mK_{n,m}$ with $n\ne 1\ne m$ ($V+V^*=n+m\not\ge nm+1 =E+1$) and no nontrivial hypercube graphs $\mQ_n$ ($V+V^*=2^{n+1}\not\ge 2^{n-1}n+1 =E+1$).

In the case of star graphs $\mS_{n+1}$ our lower estimate is sharp: it reads
\begin{displaymath}
1 \le \alpha_{n-1}(\mS_{n+1}),
\end{displaymath}
which delivers the actual value; whereas in the case of the wheel graph we find
\begin{displaymath}
0.293\simeq 1-\cos \frac{\pi}{4} \le \alpha_{n-1}(\mW_{n+1}),
\end{displaymath}
to be compared with the actual value of $1-\frac{2}{3}\cos \frac{2\pi(n-1)}{n}\to \frac{1}{3}$ as $n\to \infty$.

Our \textit{upper} estimate cannot be applied unless condition~\eqref{eq:condition-estimatediscr} is satisfied: this actually fails in the case of star graphs, wheel graphs, complete graphs or nontrivial hypercube graphs, for which we cannot improve the trivial upper estimate $\alpha_k \leq 2$ for all $k$. In most cases where condition~\eqref{eq:condition-estimatediscr} does hold, like the Petersen graph (for $k=2,3$), path graphs $\mP_n$ (for $2\le k\le \frac{n}{2}$), complete bipartite graphs $\mK_{n,m}$ (for $k\le n+m-\frac12(mn-1)$) or cycle graphs $\mC_n$ (for $2\le k\le \frac{n+1}{2}$), the upper estimates are however typically much rougher than the corresponding lower estimates; this seems to substantiate our conjecture in Remark~\ref{rem:gregarit}.
\end{example}

In view of the version of~\eqref{eq:normunnormtriv} for the higher eigenvalues, namely
\begin{equation}\label{eq:normunnormtriv-high}
\alpha_k(\mG)\deg_{\rm max}(\mG) \ge \gamma_k(\mG)\ge \alpha_k(\mG)\deg_{\rm min}(\mG)\ ,
\end{equation}
analogous estimates hold also for the eigenvalues of the discrete Laplacian $\mathcal L$. We are not aware of any earlier result of this kind in the literature about discrete and normalized Laplacians, apart from the trivial estimate $\alpha_{\max}(\mG)\le 2$ and the estimates on $\gamma_{\max}(\mG)$ summarized in~\cite{LiZha98,Shi07}.

\bibliographystyle{alpha}
%\bibliography{bk_bibl,additional}

\begin{thebibliography}{KKMM16}

\bibitem[Ari16]{Ari_prep16}
S.~Ariturk.
\newblock Eigenvalue estimates on quantum graphs.
\newblock Preprint {\tt arXiv:1609.07471}, 2016.

\bibitem[BBW15]{BanBerWey_jmp15}
R.~Band, G.~Berkolaiko, and T.~Weyand.
\newblock Anomalous nodal count and singularities in the dispersion relation of
  honeycomb graphs.
\newblock {\em J.\ Math.\ Phys.}, 56(12):122111, 2015.

\bibitem[Bel85]{Bel_laa85}
{J.~von} Below.
\newblock A characteristic equation associated to an eigenvalue problem on
  {$c^2$}-networks.
\newblock {\em Linear Algebra Appl.}, 71:309--325, 1985.

\bibitem[Ber16]{Ber_prep16}
G.~Berkolaiko.
\newblock Elementary introduction to quantum graphs.
\newblock Preprint {\tt arXiv:1603.07356 [math-ph]}, 2016.

\bibitem[BH09]{BuhHei09}
T.~B\"uhler and M.~Hein.
\newblock Spectral clustering based on the graph $p$-{L}aplacian.
\newblock In {\em Proc.\ 26th Annual Int.\ Conf.\ Mach.\ Learning}, pages
  81--88. ACM, New York, 2009.

\bibitem[BK12]{BerKuc_incol12}
G.~Berkolaiko and P.~Kuchment.
\newblock Dependence of the spectrum of a quantum graph on vertex conditions
  and edge lengths.
\newblock In {\em Spectral Geometry}, volume~84 of {\em Proceedings of Symposia
  in Pure Mathematics}. American Math. Soc., 2012.
\newblock Preprint {\tt arXiv:1008.0369}.

\bibitem[BK13]{BerKuc_graphs}
G.~Berkolaiko and P.~Kuchment.
\newblock {\em Introduction to Quantum Graphs}, volume 186 of {\em Mathematical
  Surveys and Monographs}.
\newblock AMS, 2013.

\bibitem[BL16]{BanLev_prep16}
R.~Band and G.~L\'{e}vy.
\newblock Quantum graphs which optimize the spectral gap.
\newblock Preprint {\tt arXiv:1608.00520}, 2016.

\bibitem[BL17]{BerLiu_jmaa17}
G.~Berkolaiko and W.~Liu.
\newblock Simplicity of eigenvalues and non-vanishing of eigenfunctions of a
  quantum graph.
\newblock {\em J. Math. Anal. Appl.}, 445(1):803--818, 2017.
\newblock Preprint {\tt arXiv:1601.06225}.

\bibitem[Bol98]{Bollobas}
B.~Bollob{\'a}s.
\newblock {\em Modern graph theory}, volume 184 of {\em Graduate Texts in
  Mathematics}.
\newblock Springer-Verlag, New York, 1998.

\bibitem[BR08]{BinRyn08}
P.A.~Binding and R.P.~Rynne.
\newblock Variational and non-variational eigenvalues of the $p$-laplacian.
\newblock {\em J.\ Differential Equations}, 244:24--39, 2008.

\bibitem[But08]{But08}
S.K.~Butler.
\newblock {\em {Eigenvalues and Structures of Graphs}}.
\newblock PhD thesis, University of California, San Diego, 2008.

\bibitem[Chu97]{Chung_spectralgraph}
F.R.K.~Chung.
\newblock {\em Spectral graph theory}, volume~92 of {\em CBMS Regional
  Conference Series in Mathematics}.
\newblock Published for the Conference Board of the Mathematical Sciences,
  Washington, DC, 1997.

\bibitem[DPR16]{DelRos_amp16}
L.M.~Del~Pezzo and J.D.~Rossi.
\newblock The first eigenvalue of the {$p$}-{L}aplacian on quantum graphs.
\newblock {\em Anal. Math. Phys.}, 6(4):365--391, 2016.

\bibitem[Fie73]{Fie_cmj73}
M.~Fiedler.
\newblock Algebraic connectivity of graphs.
\newblock {\em Czechoslovak Math. J.}, 23(98):298--305, 1973.

\bibitem[Fri05]{Fri_aif05}
L.~Friedlander.
\newblock Extremal properties of eigenvalues for a metric graph.
\newblock {\em Ann. Inst. Fourier (Grenoble)}, 55(1):199--211, 2005.

\bibitem[GS06]{GnuSmi_ap06}
S.~Gnutzmann and U.~Smilansky.
\newblock Quantum graphs: Applications to quantum chaos and universal spectral
  statistics.
\newblock {\em Adv. Phys.}, 55(5--6):527--625, 2006.

\bibitem[HLM15]{hein2015mini}
M.~Hein, D.~Lenz and D.~{Mugnolo (eds.)}.
\newblock Mini-workshop: Discrete $p$-laplacians: Spectral theory and
  variational methods in mathematics and computer science.
\newblock {\em Oberwolfach Reports}, 12(1):399--447, 2015.

\bibitem[KKMM16]{K2M2_ahp16}
J.B.~Kennedy, P.~Kurasov, G.~Malenov{\'a} and D.~Mugnolo.
\newblock On the spectral gap of a quantum graph.
\newblock {\em Ann. Henri Poincar\'e}, 17(9):2439--2473, 2016.

\bibitem[KMN13]{KurMalNab_jpa13}
P.~Kurasov, G.~Malenov{\'a} and S.~Naboko.
\newblock Spectral gap for quantum graphs and their edge connectivity.
\newblock {\em J. Phys. A}, 46(27):275309, 16, 2013.

\bibitem[KN14]{KurNab_jst14}
P.~Kurasov and S.~Naboko.
\newblock Rayleigh estimates for differential operators on graphs.
\newblock {\em J. Spectr. Theory}, 4(2):211--219, 2014.

\bibitem[LE11]{LanEdm11}
J.~Lang and D.~Edmunds.
\newblock {\em Eigenvalues, embeddings and generalised trigonometric
  functions}, volume 2016 of {\em Lecture Notes in Mathematics}.
\newblock Springer, Heidelberg, 2011.

\bibitem[LZ98]{LiZha98}
J.-S.~Li and X.-D.~Zhang.
\newblock On the {L}aplacian eigenvalues of a graph.
\newblock {\em Lin.\ Algebra Appl.}, 285:305--307, 1998.

\bibitem[Mug13]{Mug_na13}
D.~Mugnolo.
\newblock Parabolic theory of the discrete {$p$}-{L}aplace operator.
\newblock {\em Nonlinear Anal.}, 87:33--60, 2013.

\bibitem[Mug14]{Mugnolo_book}
D.~Mugnolo.
\newblock {\em Semigroup methods for evolution equations on networks}.
\newblock Understanding Complex Systems. Springer, Cham, 2014.

\bibitem[Nic87]{Nic_bsm87}
S.~Nicaise.
\newblock Spectre des r{\'e}seaux topologiques finis.
\newblock {\em Bull. Sci. Math. (2)}, 111(4):401--413, 1987.

\bibitem[NY76]{NakYam76}
T.~Nakamura and M.~Yamasaki.
\newblock Generalized extremal length of an infinite network.
\newblock {\em Hiroshima Math. J.}, 6(1):95--111, 1976.

\bibitem[PPAO96]{PokPryObe_mz96}
Yu.V.~Pokorny{\u\i}, V.L.~Pryadiev and A.~Al{\cprime}-Obe{\u\i}d.
\newblock On the oscillation of the spectrum of a boundary value problem on a
  graph.
\newblock {\em Mat. Zametki}, 60(3):468--470, 1996.

\bibitem[Roh16]{Roh16_preprint}
J.~Rohleder.
\newblock Eigenvalue estimates for the laplacian on a metric tree.
\newblock {\em Proc. \ Amer. \ Math. \ Soc.}, 2016.
\newblock To appear.

\bibitem[Sch06]{Sch_wrcm06}
P.~Schapotschnikow.
\newblock Eigenvalue and nodal properties on quantum graph trees.
\newblock {\em Waves Random Complex Media}, 16(3):167--178, 2006.

\bibitem[Shi07]{Shi07}
L.~Shi.
\newblock Bounds on the ({L}aplacian) spectral radius of graphs.
\newblock {\em Lin.\ Algebra Appl.}, 422:755--770, 2007.

\bibitem[Spi09]{Spi09}
D.~Spielman.
\newblock Spectral graph theory --- {M}anuscript of a {Y}ale course (appl.\
  math.\ 561/comp.\ sci.\ 662).
\newblock Available at
  \url{http://www.cs.yale.edu/homes/spielman/561/2009/lect02-09.pdf}, 2009.

\end{thebibliography}
\def\cprime{$'$} \def\cprime{$'$} \def\cprime{$'$} \def\cprime{$'$}
  \def\cprime{$'$} \def\cprime{$'$} \def\cprime{$'$}
  \def\polhk#1{\setbox0=\hbox{#1}{\ooalign{\hidewidth
  \lower1.5ex\hbox{`}\hidewidth\crcr\unhbox0}}} \def\cprime{$'$}
  \def\cprime{$'$} \def\cprime{$'$}

\end{document}